\theoremstyle{plain}
\newtheorem{thm}{Theorem}[section]
\newtheorem{theorem}[thm]{Theorem}
\newtheorem{lemma}[thm]{Lemma}
\newtheorem{corollary}[thm]{Corollary}
\newtheorem{proposition}[thm]{Proposition}
\theoremstyle{definition}
\newtheorem{definition}[thm]{Definition}
\theoremstyle{remark}
\newtheorem{remark}[thm]{Remark}
\newtheorem{example}[thm]{Example}
\def\cO{{\mathcal{O}}}
\newcommand{\pbcirc}{\ensuremath{\bullet}} 
\newcommand{\cbcirc}{\ensuremath{\circ}}
\newcommand{\bcirc}{\ensuremath{\circledcirc}}
\newcommand{\Rkan}{{\text{R}}}
\newcommand{\catA}{{\mathcal{A}}}
\newcommand{\catC}{{\mathcal{C}}}
\def\Id{{\mathds{1}}}
\def\rightharpoonupfill@{\arrowfill@\relbar\relbar\rightharpoonup}
\newcommand{\overrightharpoonup}{%
   \mathpalette{\overarrow@\rightharpoonupfill@}}
\begin{document}

\title{ Cooperads as Symmetric Sequences}

\author[B. Walter]{Benjamin Walter}
\address{
	Mathematics Research and Teaching Group\\
	Middle East Technical University \\ Northern Cyprus Campus \\
	Kalkanli, Güzelyurt, KKTC \\
	via Mersin 10, Turkey
}
\email{benjamin@metu.edu.tr}

\subjclass[2010]{18D50; 16T15, 17B62.}
\keywords{Cooperads, operads, coalgebras, Kan extensions}

\begin{abstract}
We give a brief overview of the basics of cooperad theory using a new definition
which lends itself to easy example creation and verification.
We also apply our definition to build the parenthesization and cosimplicial structures 
exhibited by cooperads and give examples.
\end{abstract}

\maketitle

\section{Introduction}

In the current work we discuss cooperads 
in generic symmetric monoidal categories from the point of view of symmetric sequences.
Fix a symmetric monoidal category $(\catC,\,\otimes)$.
Let us roughly recall the standard framework.  

Operads encode algebra structures.  The tautological example is the endomorphism operad
of an object $\textsc{end}(A) = \coprod_n \mathrm{Hom}(A^{\otimes n}, A)$.  
Operads have a natural grading
by levels expressing the ``arity'' of different ``operations'' (for example, 
$\textsc{end}(A)(n) = \mathrm{Hom}(A^{\otimes n}, A)$).  The symmetric 
group $\Sigma_n$ acts on the $n$-ary operations of an operad (for $\textsc{end}(A)(n)$
this action is by permutation of the $A^{\otimes n}$).  A graded object with 
$\Sigma_n$-actions is called a ``symmetric
sequence.''  Operads are further equipped with a 
composition product identifying the result of plugging operations into each other
(for example, 
$\textsc{end}(A) \circ \textsc{end}(A) \to \textsc{end}(A)$).
Very roughly, an operad is ``a bunch of objects with a rule for plugging them into each other''.

Operads encode algebra structures via maps of operads (preserving symmetric group actions
and composition structure).  So, for example, there is an operad 
$\textsc{lie}$ of formal Lie bracket expressions modulo Lie relations, 
along with a composition rule identifying the result of plugging
bracket expressions into each other.  A map of operads $\textsc{lie} \to \textsc{end}(A)$
identifies a specific endomorphism of $A$ for each formal Lie bracket expression.  This gives
$A$ the structure of a Lie algebra.

Coalgebra structures can also be defined via operads.  The coendomorphisms of an object
$\textsc{coend}(A) = \coprod_n \mathrm{Hom}(A,A^{\otimes n})$ also form an operad:
It is graded, with 
symmetric group action, and has a natural map 
$\textsc{coend}(A)\circ \textsc{coend}(A) \to \textsc{coend}(A)$ also given by
plugging things into each other.  Replacing $\textsc{end}$ by $\textsc{coend}$
changes algebra structures to coalgebra structures.
For example a map of operads $\textsc{lie} \to \textsc{coend}(A)$
identifies a coendomorphism of $A$ for each Lie bracket expression, thus giving $A$ 
a Lie coalgebra structure.

This is the point of view taken by \cite{Smit}, but there is an alternative.  For clarity, we 
will continue with the example of Lie algebras.  A Lie algebra structure is maps
$\textsc{lie}(n) \to \mathrm{Hom}(A^{\otimes n}, A)$ which is equivalent to maps 
$\textsc{lie}(n)\otimes A^{\otimes n} \to A$ (ignore $\Sigma_n$-actions for the moment).
Dually, a Lie coalgebra structure is maps $\textsc{lie}(n) \to \mathrm{Hom}(A,A^{\otimes n})$
which is equivalent to maps 
$\textsc{lie}(n)\otimes A \to A^{\otimes n}$ which is equivalent to 
$A \to \bigl(\textsc{lie}(n)\bigr)^* \otimes A^{\otimes n}$.  (Dualizing $\textsc{lie}(n)$
should not introduce trouble, because it is finite dimensional.)  The level-wise dual object
$\textsc{lie}\,\check{} = \coprod_n \bigl(\textsc{lie}(n)\bigr)^*$ has structure dual to that
of $\textsc{lie}$.  This is a cooperad.  (The precise definition is the subject of the current paper.)

Experience  \cite{SiWa1} \cite{SiWa2} has shown that it is sometimes more useful to directly
work with cooperads 
and cooperad structures when 
describing coalgebras rather than continually referring all the way back to operads and 
operad structures.  Also sometimes coalgebras can have a more natural expression 
as coalgebras over cooperads, rather than coalgebras over operads.
Just as operads can be thought of as ``a bunch of objects which are plugged into each other'',
cooperads can be thought of as 
``a bunch of objects where subobjects are contracted/quotiented''.

Unfortunately category theory causes a slight hitch when attempting to blindly dualize 
operad structure to define cooperads.  The dual of operad composition is cooperad 
cocomposition, which is similar except for some colimits being replaced by limits.  The 
problem comes when looking at associativity.  In a symmetric monoidal category $\otimes$
is left adjoint (to $\mathrm{Hom}$)  so it will commute with colimits.  This allows operad
composition products to be associative 
(e.g. $(\textsc{lie} \circ \textsc{lie}) \circ \textsc{lie} = 
\textsc{lie} \circ (\textsc{lie}\circ \textsc{lie})$).
However, this will generally not happen for cooperad cocomposition
(e.g. $(\textsc{lie}\,\check{} \pbcirc \textsc{lie}\,\check{}\,) \pbcirc \textsc{lie}\,\check{} \neq 
\textsc{lie}\,\check{} \pbcirc (\textsc{lie}\,\check{} \pbcirc \textsc{lie}\,\check{}\,)$).
This issue crops up for example, in the cooperadic cobar constructions of Ching in his
thesis~\cite{Chin1} and arXiv note~\cite{Chin2}. 

\medskip

We work by defining a new composition product -- a composition product of tree-functors.
The motivating intuition is that 
{\em the composition product of two symmetric sequences should
not itself be a symmetric sequence} -- in particular its group of symmetries is much too
large.  Maps to and from the tree-functor composition product can be expressed as maps to 
and from 
universal extensions, which yields the classical operad and cooperad composition products.
Using the tree-functor composition product (rather than its extension) when describing 
or defining cooperads greatly simplifies bookkeeping; though it turns out that, for operads,
it doesn't really make a difference.

We begin by introducing the notation of wreath product categories.  These are
inspired by the wreath product categories of Berger~\cite{Berg}, and at the most basic
level are merely Groethendieck constructions.  
Wreath product categories are defined so that they will 
be the natural source category of iterated composition products of symmetric sequences. 
We use this to give a simple definition of cooperads and prove all of the standard structure
holds.  Then we describe comodules and coalgebras.  We finish with simple examples 
related to work in \cite{SiWa1}, \cite{SiWa2}, and \cite{Wal1}.

In the sequel \cite{Wal2} we use the structure presented here to build cofree coalgebras,
connecting to the constructions of Fox~\cite{Fox1} and Smith~\cite{Smit}.

\subsection{Acknowledgments}
I would like to thank Dev Sinha, whose questions led to the inception of this work; as well
as Michael Ching who resolved many of my early confusions.  Also Clemens Berger, 
Bruno Vallette, and Jim McClure listened to early versions of these ideas and provided 
invaluable feedback.  Most of all, I must thank Kallel Sadok and the Mediterranean Institute 
for Mathematical Sciences (MIMS) for an invitation to speak at the conference on 
``Operads and Configuration
Spaces'' in June 2012, which led to me finally revising and clarifying these ideas which
have been on paper and bouncing around in my head for almost six years.
This work is based on the notes from my series of talks at MIMS.

\section{Wreath product categories}

This section is divided into two parts.  In the first subsection, we define wreath product 
categories using functors to the category of finite sets.
Our definition is related to, but more general than, the dual of refined partitions of 
sets as used in literature
by e.g. Arone-Mahowald \cite{ArMa}.  The salient difference between wreath categories
and refined partitions is that
wreath categories incorporate the empty-set (see Remark~\ref{rem:partition}).
In the second subsection, an equivalent definition is given in terms of labeled level trees --
a more familiar category for the discussion of operads.  

\subsection{Wreath Products}

Write $\Sigma_n$ for the category of $n$-element sets and set isomorphisms and
$\Sigma_\ast = \coprod_{n\ge 0} \Sigma_n$ for the category of all finite sets and set isomorphisms ($\Sigma_0 = \emptyset$).  
Our notation reflects the fact that a functor 
$\Sigma_n \to \catC$ is merely an object of $\catC$ with a $\Sigma_n$-action.

There is an alternative way to construct $\Sigma_n$.  
Write $\mathtt{FinSet}$ for the category of finite sets and \textbf{all} set maps, and
write $\mathtt{[n]}$ for the category 
$1 \xrightarrow{f_1} 2 \xrightarrow{f_2} \cdots  \xrightarrow{f_{n-1}} n$.  
Then $\Sigma_\ast$ is 
equivalent to the category of functors $\mathtt{[1]} \to \mathtt{FinSet}$ and natural
isomorphisms.  We generalize this to define wreath product categories.
%Note that $\Sigma_n$ is equivalent to the category $\tilde\Sigma_n$ of 
%natural transformations $\Sigma_n \to \Sigma_1$.  We will use the equivalence below
%without further comment.  

\begin{definition}\label{def:wreath1}
%$\overbrace{\Sigma_*\wr \cdots \wr\Sigma_*}^n = 
$\Sigma_*^{\wr n}$ 
is the category of contravariant functors $\mathtt{[n]}\to\mathtt{FinSet}$ and natural isomorphisms.
\end{definition}

\begin{remark}
Objects of $\Sigma_\ast^{\wr n}$ are chains of morphisms in $\mathtt{FinSet}$, indexed in the following manner.
\[
S_1 \xleftarrow{f_1} S_2 \xleftarrow{f_2} \cdots \xleftarrow{f_{n-1}} S_n
\]
Since $\mathtt{[n]} \cong \mathtt{[n]}^{\mathrm{op}}$, the use of contravariant functors in Definition~\ref{def:wreath1} is purely cosmetic.  Using covariant functors would change nothing, except that indices would not line up as perfectly later on.
\end{remark}

Note that we are clearly defining the levels of a simplicial category.  Before continuing
in that direction, however, we will explain our choice of notation via an equivalent, hands-on definition of wreath products with a generic category $\catA$.

\begin{definition}\label{def:wreath2}
The wreath product category $\Sigma_n\wr \catA$ is 
the category with
\begin{itemize} 
\item objects $\mathrm{Obj}(\Sigma_n\wr\catA) = 
   \bigl\{ \{A_s\}_{s\in S} \big|\ S \in \mathrm{Obj}(\Sigma_n)\bigr\}$ 
given by $n$-element sets of decorated objects of $\catA$;
\item and morphisms 
$\bigl(\sigma;\ \{\phi_t\}_{t\in T}\bigr):\{A_t\}_{t\in T} \longrightarrow \{B_s\}_{s\in S}$ 
given by a set isomorphism $\sigma:T\to S$ and 
a set of $\catA$-morphisms $\phi_t:A_{t} \to B_{\sigma(t)}$.
\end{itemize}

The wreath product category $\Sigma_\ast \wr\catA$ is given by 
$\Sigma_\ast\wr\catA := \coprod_{n\ge 0}\Sigma_n\wr \catA$. 
\end{definition}

\begin{remark}
$\Sigma_0\wr \catA$ is the empty category, since $\{A_s\}_{s\in\emptyset} = \emptyset$.  Furthermore  $\Sigma_\ast \wr \Sigma_0 \cong \Sigma_\ast \cong \Sigma_1\wr\Sigma_\ast$.  These equivalences are given by writing objects of 
$\Sigma_\ast \wr \Sigma_0$ as $(S\leftarrow \emptyset)$ and objects of $\Sigma_1\wr \Sigma_\ast$ as $(\star \leftarrow S)$ and using the facts that $\emptyset$ is initial and a one point set $\star$ is final in $\mathtt{FinSet}$.  We make further use of these equivalences later.  Note that $\Sigma_\ast\wr\Sigma_1 \ncong\Sigma_\ast$ because one point sets are not initial in $\mathtt{FinSet}$.
\end{remark}

The following proposition is easy to check.

\begin{proposition}
Definitions \ref{def:wreath1} and \ref{def:wreath2} agree:  
\begin{itemize}
\item
 $\Sigma_\ast^{\wr 2} \cong \Sigma_\ast \wr \Sigma_\ast$, and more generally
\item
 $\Sigma_\ast^{\wr n} \cong \Sigma_\ast \wr \bigl(\Sigma_\ast^{\wr n-1}\bigr)
  \cong \overbrace{\Sigma_\ast \wr (\cdots \wr (\Sigma_\ast \wr \Sigma_\ast))}^n$.
\end{itemize}
\end{proposition}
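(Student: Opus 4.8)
The plan is to unwind both definitions and exhibit an explicit equivalence of categories, using the fact (observed just before Definition~\ref{def:wreath1}) that a functor $\mathtt{[1]}\to\mathtt{FinSet}$ up to natural isomorphism is the same as an object of $\Sigma_\ast$. First I would treat the base case $\Sigma_\ast^{\wr 2}\cong\Sigma_\ast\wr\Sigma_\ast$. An object of $\Sigma_\ast^{\wr 2}$ is a chain $S_1\xleftarrow{f_1}S_2$ in $\mathtt{FinSet}$; I would send it to the $S_1$-indexed family $\{f_1^{-1}(s)\}_{s\in S_1}$, which is exactly an object of $\Sigma_{|S_1|}\wr\Sigma_\ast\subseteq\Sigma_\ast\wr\Sigma_\ast$. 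Conversely a family $\{T_s\}_{s\in S}$ of finite sets is sent to the fold map $\coprod_{s\in S}T_s\to S$. These are mutually inverse up to canonical isomorphism (a finite set is canonically the disjoint union of the fibers of any map out of it). The point requiring a line of care is the morphism level: a natural isomorphism of chains is a commuting square with vertical isomorphisms $\alpha\colon S_1\to S_1'$, $\beta\colon S_2\to S_2'$; commutativity $f_1'\beta=\alpha f_1$ says exactly that $\beta$ restricts to isomorphisms $f_1^{-1}(s)\to (f_1')^{-1}(\alpha(s))$ on fibers, which is precisely the data $(\alpha;\{\beta|_{T_s}\})$ of a $\Sigma_\ast\wr\Sigma_\ast$-morphism. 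So the correspondence is a bijection on hom-sets and is clearly functorial, giving the equivalence (in fact an isomorphism) of categories.

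Next I would bootstrap to general $n$ by induction, establishing the recursion $\Sigma_\ast^{\wr n}\cong\Sigma_\ast\wr(\Sigma_\ast^{\wr n-1})$. An object of $\Sigma_\ast^{\wr n}$ is a chain $S_1\xleftarrow{f_1}S_2\leftarrow\cdots\xleftarrow{f_{n-1}}S_n$. Applying the base-case construction to the first map $f_1$ replaces it by the family $\{f_1^{-1}(s)\}_{s\in S_1}$; the remaining chain $S_2\leftarrow\cdots\leftarrow S_n$ then decomposes, over each $s\in S_1$, into its restriction to $f_1^{-1}(s)$, namely a chain of length $n-1$ starting at $f_1^{-1}(s)$, i.e.\ an object of $\Sigma_\ast^{\wr n-1}$. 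So the whole object becomes an $S_1$-indexed family of objects of $\Sigma_\ast^{\wr n-1}$, which is an object of $\Sigma_\ast\wr(\Sigma_\ast^{\wr n-1})$. The inverse glues a family of length-$(n-1)$ chains into a single length-$n$ chain by taking disjoint unions level by level. Morphisms are handled exactly as in the base case: a natural isomorphism of length-$n$ chains is a ladder of commuting squares, and commutativity of the first square forces the rest of the ladder to break up fiberwise over $S_1$. Iterating the recursion and using the base case for the innermost term yields the $n$-fold nested wreath product $\overbrace{\Sigma_\ast\wr(\cdots\wr(\Sigma_\ast\wr\Sigma_\ast))}^{n}$, after noting $\Sigma_\ast^{\wr 1}=\Sigma_\ast$ (a functor $\mathtt{[1]}\to\mathtt{FinSet}$ and natural isomorphisms).

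The only genuine subtlety—and the step I expect to be the main obstacle in writing it cleanly rather than in truth—is bookkeeping the fibers and the interaction with $\Sigma_0=\emptyset$. One must check the construction behaves correctly when some fiber $f_i^{-1}(s)$ is empty: an empty fiber contributes the empty family, consistent with $\Sigma_0\wr\catA$ being the empty category and with the equivalences recorded in the remark after Definition~\ref{def:wreath2}. I would also be explicit that ``the same object up to canonical isomorphism'' is adequate here because we only ever use natural \emph{isomorphisms} as morphisms, so no naturality-of-the-equivalence coherence beyond functoriality on isomorphisms is needed. With those points noted, the verifications are the routine unwinding indicated above and need not be spelled out in full.
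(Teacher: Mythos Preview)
Your proposal is correct and is precisely the ``easy check'' the paper has in mind: the paper gives no proof beyond the \qed, so your explicit fiber/disjoint-union correspondence and the inductive decomposition of a length-$n$ chain into an $S_1$-indexed family of length-$(n{-}1)$ chains is exactly the intended verification.

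One small wrinkle in your last paragraph: an empty fiber $f_1^{-1}(s)=\emptyset$ does not produce ``the empty family'' but rather the legitimate object $\emptyset\leftarrow\emptyset\leftarrow\cdots\leftarrow\emptyset$ of $\Sigma_\ast^{\wr n-1}$, so there is nothing to worry about there; the concern you flag does not actually arise. (The paper's remark that $\Sigma_0\wr\catA$ is ``the empty category'' is itself a bit loose---it is the category with a single object, the empty family---but this plays no role in the equivalence.)
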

\qed

Using notation from Definition~\ref{def:wreath2}, the endomorphisms of the wreath product
category $\Sigma_n\wr \Sigma_m$ correspond to the automorphisms of an
$n$ element set of $m$ element sets $S = \{A_1, \dots , A_n\}$ with $|A_i| = m$.
Elements within each $A_i$ can be permuted by $\Sigma_m$ and the $A_i$ ``blocks''
are permuted by $\Sigma_n$ -- this is the wreath product group 
$\Sigma_n\wr \Sigma_m$.
Thus, a functor 
$\Sigma_n\wr \Sigma_m \to \catC$ is an object of 
$\catC$ equipped with an action of the wreath product group $\Sigma_n \wr \Sigma_m$.
We view $\Sigma_\ast \wr \Sigma_\ast$ as a generalization of this basic example -- the 
``blocks'' $A_i$ no longer need to be same size, and there can be an arbitrary number of them.

We return to the simplicial structure.  Recall that there are standard ``face'' functors 
$\partial^n_i:\mathtt{[n]} \to \mathtt{[n-1]}$ for $1 \le i \le (n-1)$, 
given by composing morphisms or forgetting $1$ (for reasons to be explained shortly, 
we do not use the ``forget $n$'' face map, $\partial^n_n$).
\begin{align*} 
\partial^n_i\bigl(1\xrightarrow{f_{1}} \cdots \xrightarrow{f_{n-1}} n\bigr) &= 
  \bigl(1 \rightarrow \cdots \rightarrow (i-1) \xrightarrow{f_{i}\circ f_{i-1}} (i+1) 
       \to\cdots \to n\bigr) \\
\partial^n_1(1\xrightarrow{f_{1}} \cdots \xrightarrow{f_{n-1}}n\bigr) &=
  \bigl(2 \rightarrow \cdots \rightarrow n\bigr)
\end{align*}
 Furthermore,
(because we do not allow the use of $\partial^n_n$ functors) any chain of compositions  
$\partial^{2}_{i_{2}}\circ\cdots \circ\partial^n_{i_n}:\mathtt{[n]}\to\mathtt{[1]}$
 equals the functor
$\gamma^n:\mathtt{[n]} \to \mathtt{[1]}$ which forgets all but the top object.
\[ \gamma^n\bigl(1\xrightarrow{f_{1}} \cdots \xrightarrow{f_{n-1}} n\bigr) =
 \bigl( n \bigr) \]
We will write $\partial^n_i$ and $\gamma^n$ also for the induced functors 
$\partial^n_i:\Sigma_\ast^{\wr n} \to \Sigma_\ast^{\wr (n-1)}$, for $1\le i\le (n-1)$,
and 
$\gamma^n: \Sigma_\ast^{\wr n} \to \Sigma_\ast$.  When $n$ is clear from context
we may write merely $\partial_i$ and $\gamma$.

\begin{remark} 
In the notation of Definition~\ref{def:wreath2}, the map 
$\gamma^2 = \partial^2_1: \Sigma_\ast \wr \Sigma_\ast \to \Sigma_\ast$ is given by
$\{S_t\}_{t\in T} \mapsto \coprod_{T} S_t$.  All other $\partial^n_i$ and $\gamma^n$
are induced by this (see Proposition~\ref{prop:wreath_is_associative}).
\end{remark} 

Before describing the degeneracy maps, we explain the missing $\partial^n_n$.  Recall
that $\Sigma_\ast$ is equivalent to the full subcategory $\widetilde{\Sigma}_\ast = \Sigma_1\wr\Sigma_\ast \subset \Sigma_\ast\wr\Sigma_\ast$
of functors sending $1$ to a one element set.  More generally, $\Sigma_\ast^{\wr n}$
is equivalent to the full subcategory 
$\widetilde{\Sigma}_\ast^{\wr n} =\Sigma_1\wr \Sigma_\ast^{\wr n} \subset \Sigma_\ast^{\wr n+1}$
of functors sending $1$ to a one element set.  Under this correspondence 
the face functors 
$\tilde\partial^n_i:\widetilde{\Sigma}_\ast^{\wr n} \to 
    \widetilde{\Sigma}_\ast^{\wr n-1}$, for $1 \le i \le (n-1)$, 
are all given by composition; however the functor $\tilde\partial^n_n$ is not.
\[\tilde\partial^n_i\bigl(\star \xleftarrow{f_0} S_1 \xleftarrow{f_1} \cdots 
   \xleftarrow{f_{n-1}} S_n\bigr) = 
  \bigl(\star \leftarrow \cdots \leftarrow S_{i-1} \xleftarrow{f_{i-1}\circ f_i} S_{i+1} \leftarrow 
   \cdots \leftarrow S_n\bigr)
\]
(By convention, $\star = S_0$).
Our goal is to capture the structure of $\widetilde{\Sigma}_\ast^{\wr n}$ along with 
the composition maps $\tilde{\partial}^n_i$.  Instead of working with this directly,
we use the equivalent categories and functors 
$\Sigma_\ast^{\wr n}$ and $\partial^n_i$;  because in practice keeping track of the 
final, one point set at the bottom of each chain is unnecessarily tedious.

We return to the degeneracies, which are best written via the equivalent categories
$\widetilde{\Sigma}_\ast^{\wr n}$.  In this notation, the degeneracy functors
$\tilde s^n_i:\widetilde{\Sigma}_\ast^{\wr n} \to \widetilde{\Sigma}_\ast^{\wr n+1}$
for $0\le i \le n$ are the doubling maps.
\[ \tilde s^n_i\bigl(
\star\xleftarrow{f_0} S_1 \xleftarrow{f_1} \cdots \xleftarrow{f_{n-1}} S_n
  \bigr) = 
\bigl(\star \leftarrow \cdots \leftarrow S_i \xleftarrow{\mathrm{Id}} S_i \leftarrow \cdots \leftarrow S_n \bigr)
 \]
Note that defining the degeneracy $s^n_0$ on the level of $\Sigma_\ast^{\wr n}$
requires picking a distinguished one point set.  A reader averse to making choices should 
replace all $\Sigma_\ast$, $\partial_i$, etc. by $\widetilde{\Sigma}_\ast$, $\tilde\partial_i$, etc. from now on.

It is classical that the degeneracies $s^n_{i-1}$ and $s^n_i$ are each sections of the face map $\partial^{n+1}_i$. 
Thus face and degeneracy maps combine to give
a collection of
categories and functors:
$$\xymatrix@C=2pt{
\cdots \ar@<9pt>[rrrr] \ar@<3pt>[rrrr] \ar@<-3pt>[rrrr] \ar@<-9pt>[rrrr] 
    &&\quad&\ar@{-->}[ll] \ar@{-->}@<6pt>[ll] \ar@{-->}@<-6pt>[ll] 
            \ar@{-->}@<12pt>[ll] \ar@{-->}@<-12pt>[ll] & 
 \Sigma_\ast \wr\Sigma_\ast\wr\Sigma_\ast\wr\Sigma_\ast\ar@<6pt>[rrrr] \ar[rrrr] \ar@<-6pt>[rrrr] 
    &&\quad&\ar@{-->}@<3pt>[ll] \ar@{-->}@<-3pt>[ll] \ar@{-->}@<9pt>[ll] \ar@{-->}@<-9pt>[ll] &
  \Sigma_\ast\wr\Sigma_\ast\wr\Sigma_\ast \ar@<3pt>[rrrr] \ar@<-3pt>[rrrr] 
    &&\quad&\ar@{-->}[ll] \ar@{-->}@<6pt>[ll] \ar@{-->}@<-6pt>[ll] &
  \Sigma_\ast\wr\Sigma_\ast \ar[rrrr] 
    &&\quad&\ar@{-->}@<3pt>[ll] \ar@{-->}@<-3pt>[ll] &
  \Sigma_\ast
}$$
where the dashed, left-pointing arrows are sections of their neighboring right-pointing arrows
and all pairs of neighboring right-pointing arrows are coequalized by an arrow out of their
target.  Under the correspondence 
$\Sigma_\ast^{\wr n} \cong \widetilde{\Sigma}_\ast^{\wr n} 
  \subset \Sigma_\ast^{\wr n+1}$, this is very explicitly a simplicial category with
the bottom level as well as the first and last face maps removed; equivalently, an 
augmented simplicial category with two extra degeneracies.

% \begin{remark} 
% There is a standard equivalence of categories between the category 
% $\Sigma_n$ of $n$-element sets and set isomorphisms
% and the category $\mathbf{\Sigma_n}$ consisting of one object and morphisms $\sigma\in S_n$
% elements of the $n$th symmetric group.   
% This induces an equivalence between $\Sigma_n\wr\catA$ and the category with 
% \begin{itemize}
% \item objects given by $n$-tuples $(A_1,\dots,A_n)$ of objects of $A$;
% \item and morphisms given by  
% $(\sigma; \phi_1,\dots,\phi_n):(A_1,\dots,A_n) \longrightarrow(B_1,\dots,B_n)$
% where $\sigma\in \Sigma_n$ and $\phi_i:A_{\sigma^{-1}(i)}\to B_{i}$ are morphisms of $\catA$.
% \end{itemize}
% \end{remark}

% Our interest lies in the categories of iterated wreath products of $n$ copies of
% $\Sigma_\ast$, as 
% well as the category of arbitrary length 
% iterated wreath products of 
% $\Sigma_\ast$.  Before continuing, we  
% first will remark upon an alternate method of attempting to view iterated wreath
% product categories $\Sigma_\ast \wr \cdots \wr \Sigma_\ast$: 

\begin{remark}\label{rem:hat_empty1}
We could express {\bf all} of the standard face maps $\partial^n_i$, $1\le i\le n$, as 
compositions by writing $\Sigma_\ast^{\wr n} \cong \overline{\Sigma}_\ast^{\wr n} = \Sigma_1 \wr \Sigma_\ast^{\wr n} \wr \Sigma_0 \subset \Sigma_\ast^{\wr n+2}$, 
 the full subcategory of functors sending $(n+2)$ to the empty-set and $1$ to a one element set.  Then $\partial^n_n$ becomes:
\[
\overline{\partial}^n_n\bigl(
 \star \xleftarrow{f_0} S_1 \xleftarrow{f_1} \cdots \xleftarrow{f_{n-1}} S_n
 \xleftarrow{f_n} \emptyset \bigr) = \bigl(
 \star \leftarrow S_1 \leftarrow \cdots \leftarrow S_{n-1} \xleftarrow{f_{n-1}\circ f_n} \emptyset \bigr)
\]
The $\overline{\Sigma}_\ast^{\wr n}$  fit together to make an (unaugmented) simplicial category with two extra degeneracies.  In the next section, the levels of this will be given an alternate definition and called $\hat \emptyset_n$.  This structure is useful for constructing algebras and coalgebras instead of operads and cooperads.
\end{remark}

\begin{remark}\label{rem:partition}
Another construction which has been useful in the past for describing and working
with operads uses the category of sets equipped with iterated refinements of partitions
where morphisms are given by set isomorphisms respecting all partition equivalences 
(see Arone-Mahowald \cite{ArMa} and Ching \cite{Chin1}) .
A partition of a set $S$ is equivalent to a \textbf{surjective} set map $S \to T$ where
$T$ is the set of partitions.
An iterated partition of a set $S$ is equivalent to a functor from $\mathtt{[n]}$ to
the category of finite sets and \textbf{surjections} 
(instead of the category of finite sets and \textbf{all} set maps).
This is sufficient for describing operads and cooperads which are trivial in ``0-arity''.
So partitions cannot be used to describe, for example, 
an operad of algebras over an algebra.  Also  
missing 0-arity means that partitions cannot work with algebras (or coalgebras)
as just a special case of modules (or comodules).
\end{remark}

Before continuing with the next subsection, we will combine Definitions~\ref{def:wreath1} and \ref{def:wreath2} to get a more general definition of wreath products with generic categories, necessary to discuss associativity.

\begin{definition}\label{def:wreath3}
 The wreath product category $\Sigma_\ast^{\wr n} \wr \catA$ is the category with
\begin{itemize}
\item  $\mathrm{Obj}(\Sigma_\ast^{\wr n} \wr \catA) = 
\Bigl\{\bigl(F,\ \{A_s\}_{s\in F(n)}\bigr) \ | \  F \in \mathrm{Obj}(\Sigma_\ast^{\wr n}),\ A_s \in \mathrm{Obj}(\catA)\Bigr\}$ 
\item  morphisms 
$\bigl(\Phi;\ \{\phi_s\}_{s\in F(n)}\bigr): \bigl(F,\ \{A_s\}\bigr) \to \bigl(G,\ \{B_t\}\bigr)$ given by 
 a natural isomorphism $\Phi:F \to G$ and a set of $\catA$-morphisms 
$\phi_s:A_s \to B_{(\Phi n)(s)}$
\end{itemize}
\end{definition}

\begin{proposition}\label{prop:wreath_is_associative}
 Wreath product is associative.
\[ (\Sigma_\ast \wr \Sigma_\ast) \wr \Sigma_\ast \cong \Sigma_\ast \wr (\Sigma_\ast \wr \Sigma_\ast) \cong \Sigma_\ast^{\wr 3}\]
More generally we have the following.
\[ \Sigma_\ast^{\wr n} \wr \Sigma_\ast^{\wr m} \cong \Sigma_\ast^{\wr n+m}\]

Furthermore, the face maps $\partial^n_i$ are all induced by $\gamma^2 = \partial^2_1$.
\[ \partial^n_i = \mathrm{Id} \wr \gamma^2 \wr \mathrm{Id}\ :\  
   \Sigma_\ast^{\wr i-1} \wr \bigl(\Sigma_\ast \wr \Sigma_\ast\bigr) \wr \Sigma_\ast^{\wr n-i-1} \longrightarrow \Sigma_\ast^{\wr i-1} \wr \bigl( \Sigma_\ast\bigr) \wr \Sigma_\ast^{\wr n-i-1}
\]
\end{proposition}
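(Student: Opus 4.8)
The plan is to prove the associativity of the wreath product first, and then deduce the statement about face maps as a corollary of the construction. The cleanest route is via Definition~\ref{def:wreath1}: an object of $\Sigma_\ast^{\wr n}$ is a contravariant functor $\mathtt{[n]}\to\mathtt{FinSet}$, i.e.\ a chain $S_1 \xleftarrow{f_1} S_2 \xleftarrow{} \cdots \xleftarrow{f_{n-1}} S_n$. First I would unwind, using Definition~\ref{def:wreath3}, what an object of $\Sigma_\ast^{\wr n}\wr\Sigma_\ast^{\wr m}$ actually is: a functor $F\colon\mathtt{[n]}\to\mathtt{FinSet}$ together with, for each $s\in F(n)$, a functor $A_s\colon\mathtt{[m]}\to\mathtt{FinSet}$. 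The key observation is that this data assembles canonically into a single functor $\mathtt{[n+m]}\to\mathtt{FinSet}$: the top $n$ stages are $F$, and over the set $F(n)$ sitting at stage $n$ we glue in the coproduct $\coprod_{s\in F(n)} A_s(\cdot)$ at each of the remaining $m$ stages, with the structure map from stage $n+1$ to stage $n$ being the obvious projection $\coprod_{s\in F(n)} A_s(1) \to F(n)$. I would write down this assignment explicitly on objects and on morphisms (a natural isomorphism $\Phi\colon F\to G$ together with the $\phi_s$ glue to a natural isomorphism of the glued functors), and then write down the inverse: given a functor $\mathtt{[n+m]}\to\mathtt{FinSet}$, restrict to the top $n$ stages to get $F$, and for each $s\in F(n)$ take the fibers over $s$ of the composite maps $S_{n+k}\to S_n$ to recover $A_s$. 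Checking these are mutually inverse functors is a routine diagram chase, so I would state it as such rather than grinding it out.

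Once the basic isomorphism $\Sigma_\ast^{\wr n}\wr\Sigma_\ast^{\wr m}\cong\Sigma_\ast^{\wr n+m}$ is in hand, the two special cases $(\Sigma_\ast\wr\Sigma_\ast)\wr\Sigma_\ast$ and $\Sigma_\ast\wr(\Sigma_\ast\wr\Sigma_\ast)$, together with the general $n+m$ statement, follow by taking $n,m\in\{1,2\}$ and iterating; associativity is then the assertion that both composites land on $\Sigma_\ast^{\wr 3}$ compatibly, which is immediate from the uniqueness of the gluing construction (both describe the same functor $\mathtt{[3]}\to\mathtt{FinSet}$ with the same decorations). I would also remark that the wreath-with-a-generic-category $\Sigma_\ast^{\wr n}\wr\catA$ of Definition~\ref{def:wreath3} satisfies the obvious variant $(\Sigma_\ast^{\wr n}\wr\Sigma_\ast^{\wr m})\wr\catA\cong\Sigma_\ast^{\wr n}\wr(\Sigma_\ast^{\wr m}\wr\catA)\cong\Sigma_\ast^{\wr n+m}\wr\catA$, since nothing in the argument used finiteness of the last factor; this is what makes the displayed formula for $\partial^n_i$ typecheck.

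For the statement about face maps, I would argue as follows. The functor $\gamma^2=\partial^2_1\colon\Sigma_\ast\wr\Sigma_\ast\to\Sigma_\ast$ sends $\{S_t\}_{t\in T}\mapsto\coprod_{t\in T}S_t$, as recorded in the Remark following the $\partial^n_i$. Under the associativity isomorphism, the functor $\mathrm{Id}\wr\gamma^2\wr\mathrm{Id}\colon\Sigma_\ast^{\wr i-1}\wr(\Sigma_\ast\wr\Sigma_\ast)\wr\Sigma_\ast^{\wr n-i-1}\to\Sigma_\ast^{\wr i-1}\wr\Sigma_\ast\wr\Sigma_\ast^{\wr n-i-1}$ is, on the $i$-th and $(i{+}1)$-st stages, exactly the coproduct-over-fibers operation, which is precisely composition of the structure maps $f_{i-1}$ and $f_i$ — i.e.\ it is $\partial^n_i$ by the defining formula $\partial^n_i(\cdots\to(i{-}1)\xrightarrow{f_i\circ f_{i-1}}(i{+}1)\to\cdots)$. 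So the proof here is just matching the two descriptions of the same functor: on one side the explicit ``compose two adjacent arrows'' formula defining $\partial^n_i$, on the other side the ``glue, then apply $\gamma^2$ in the middle block'' description coming from the associativity isomorphism. I expect the main obstacle to be purely bookkeeping: keeping the indexing of the $m+n$ stages aligned with the decorations $\{A_s\}_{s\in F(n)}$ through the gluing, and verifying that morphisms (pairs $(\Phi,\{\phi_s\})$) glue associatively — the reindexing $(\Phi n)(s)$ in Definition~\ref{def:wreath3} has to be threaded through both ways and seen to agree. There is no real mathematical difficulty, only the risk of an off-by-one error, which is exactly why the paper chose the $\mathtt{[n]}$-indexing convention that makes ``indices line up perfectly.''
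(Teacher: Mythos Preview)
Your proposal is correct and is exactly the natural way to verify the claim. The paper itself gives no proof at all: the proposition is simply followed by a \qed, treating the statement as an immediate consequence of unwinding Definitions~\ref{def:wreath1} and~\ref{def:wreath3}. Your gluing/un-gluing construction (assemble $(F,\{A_s\})$ into a single functor $\mathtt{[n+m]}\to\mathtt{FinSet}$ by taking coproducts of the $A_s$ over the fibers of $F(n)$, and invert by restricting and taking fibers) is precisely the ``short calculation'' the paper is silently invoking, and your identification of $\mathrm{Id}\wr\gamma^2\wr\mathrm{Id}$ with ``compose the two adjacent structure maps'' is likewise the intended content of the face-map statement.
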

\qed

For example $\partial^3_1 = \gamma^2 \wr \mathrm{Id}$ and 
$\partial^3_2 = \mathrm{Id} \wr \gamma^2$.

\subsection{Level trees}
In this subsection we connect the wreath product constructions of the previous subsection
with the standard, classical method of describing operads via trees.  

For our purposes
a tree is a (nonempty) 
non-cyclic, connected, finite 
graph whose vertices are distinguished as: 
a ``root vertex'' of valency 1,
a (possibly empty) set of ``leaf vertices'' of valency 1, and all other vertices called 
``interior vertices''.  We require each tree to have a root and at least one
interior vertex;  however, we do not require that interior vertices have valency $>1$ -- 
despite the oxymoron (in particular, we allow the tree with a root, an 
``interior vertex'' but no leaves as in Figure~\ref{F:2-trees}).  
A tree isomorphism is an isomorphism of vertex
and edge sets, preserving with root and leaf distinctions. 

For convenience of notation we will orient all edges of our trees so that they point 
towards the root vertex; when drawing trees, we will not explicitly indicate this 
orientation, but rather always position the root at the bottom and the leaves at the
top, with the understanding that all edges point downwards.
We will denote interior vertices with a darkened dot $\bullet$, but we will generally
not bother to draw the root or leaf vertices -- instead we will indicate only the edges
connecting to them.  Also for convenience, we will draw trees on the plane, however
we consider them as non-planar
objects.  In particular, we will not assert any planar orderings on vertices or edges. 

There is a natural height function on the vertices of trees -- assigning to each vertex
the number of vertices on the path between it and the root (the vertex adjacent to
the root has height 0; the root has height -1).  
A ``level $n$ tree'' is a tree whose leaves all have height $n$ and whose interior
vertices have height $< n$.  A ``level tree'' is a tree
which is level $n$ for some $n$.  Note that a level $n$ tree may have branches without
leaves which contain no interior vertices of height $(n-1)$, as in Figure~\ref{F:2-trees}.
In particular, a tree with no leaves may be level $n$ as well as level $(n+1)$, etc.

\begin{figure}[h]
$$ 
 \begin{aligned}\begin{xy}
  (0,0)*{\bullet};
  (0,-2)**\dir{-}
 \end{xy}\qquad  
 \begin{xy}
  (-2,4)*{\bullet}="1",
  (2,4)*{\bullet}="2",
  (0,0)*{\bullet}="M",
  "M";"1"**\dir{-},
  "M";"2"**\dir{-},
  "M";(0,-2)**\dir{-},
 \end{xy}\qquad \begin{xy}
  (0,4)*{\bullet}="1", 
  (0,7)="N1",
  (0,0)*{\bullet}="M",
  "M";"1"**\dir{-},
  "N1";"1"**\dir{-},
  "M";(0,-2)**\dir{-},
 \end{xy}\qquad
 \begin{xy}
  (-1,4)*{\bullet}="1", 
  (3,4)*{\bullet}="2",
  (-3,7)="1a",
  (1,7)="1b",
  (0,0)*{\bullet}="M",
  "M";"1"**\dir{-},
  "M";"2"**\dir{-},
  "1";"1a"**\dir{-},
  "1";"1b"**\dir{-},
  "M";(0,-2)**\dir{-},
 \end{xy}\qquad \begin{xy}
   (0, 0)*{\bullet}="M";
  (-6,4)*{\bullet}="1", 
  ( 0,4)*{\bullet}="2", 
  ( 6,4)*{\bullet}="3", 
  (-10,7)="1a",
  (-6,7)="1b",
  (-2,7)="1c",
  ( 4,7)="3a",
  ( 8,7)="3b",
  "M";(0,-2)**\dir{-},
  "M";"1"**\dir{-},
  "M";"2"**\dir{-},
  "M";"3"**\dir{-},
  "1a";"1"**\dir{-},
  "1b";"1"**\dir{-},
  "1c";"1"**\dir{-},
  "3a";"3"**\dir{-},
  "3b";"3"**\dir{-},
 \end{xy} % \qquad
 \end{aligned}\qquad\qquad\qquad
 \begin{aligned}\begin{xy}
  (0,0)*{\bullet}="R", 
  (0,4)*{\bullet}="c",
  (-6,4)*{\bullet}="l",
  (6,4)*{\bullet}="r",
  (-9,8)*{\bullet}="ll",
  (-3,8)*{\bullet}="lr",
  (6,8)*{\bullet}="rc",
  (-9,12)*{\bullet}="llc",
  (0,12)*{\bullet}="rcl", 
  (6,12)*{\bullet}="rcc", 
  (12,12)*{\bullet}="rcr", 
  (-12,15)="llcl",
  (-6,15)="llcr",
  (6,15)="rccc",
  "R";(0,-2)**\dir{-},
  "R";"l"**\dir{-},
  "R";"c"**\dir{-},
  "R";"r"**\dir{-},
  "l";"ll"**\dir{-},
  "l";"lr"**\dir{-},
  "r";"rc"**\dir{-},
  "ll";"llc"**\dir{-},
  "rc";"rcl"**\dir{-},
  "rc";"rcc"**\dir{-},
  "rc";"rcr"**\dir{-},
  "llc";"llcl"**\dir{-},
  "llc";"llcr"**\dir{-},
  "rcc";"rccc"**\dir{-},
 \end{xy}\end{aligned}
$$
\caption{Some examples of level 2 trees and a level 4 tree}\label{F:2-trees}
\end{figure}
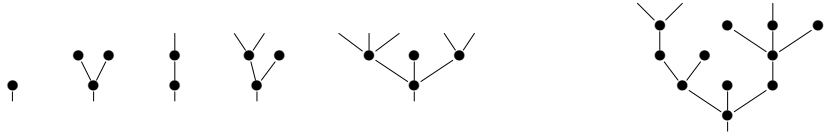

If $v$ is the target of the directed edge $e$ then we say $e$ is an ``incoming edge''
of $v$ and we write $\mathrm{In}(v)$ for the set of incoming edges of $v$.
In our drawings, incoming edges are edges abutting a vertex from above.
Each non-root vertex also has one ``outgoing edge'' (the abutting edge on 
the path from the vertex to the root), which will be drawn abutting the vertex
from below.

\begin{definition}
 A labeled level tree is a level tree equipped with labeling isomorphisms
 $\{l_v : S_v \xrightarrow{\ \cong\ } \mathrm{In}(v)\}_v$ from finite 
 sets to the sets of incoming edges at each vertex. 

 Let $\Psi$ be the category of all labeled level trees with morphisms given by 
 tree isomorphisms. %with compatible isomorphisms of labeling sets.
 Let $\Psi_n$ be the full subcategory of $\Psi$ consisting of only level $n$ trees. 
\end{definition}

Since there is always only one incoming edge at the root, and never any incoming edges
at leaves, we may equivalently label only the incoming edges at interior vertices.

\begin{definition}
 Given a category $\catA$ define the wreath product category $\Psi\wr\catA$ to be 
 the category of all labeled level trees whose leaves are
 decorated by elements of $\catA$; morphisms are given by tree 
 isomorphisms equipped with $\catA$-morphisms between the leaf decorations compatible with
 the induced isomorphism of leaf sets.  Let $\Psi_n\wr\catA$ be the full subcategory
 of this consisting of only level $n$ trees. 
\end{definition}

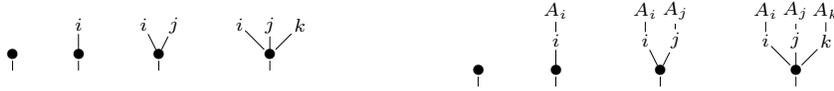
\begin{figure}[h]
$$
 \begin{aligned}\begin{xy}
  (0,0)*{\bullet};
  (0,-2)**\dir{-}
 \end{xy}\qquad  
 \begin{xy}
  (0,4)*+UR{\scriptstyle{i}}="1", 
  (0,0)*{\bullet}="M",
  "M";"1"**\dir{-},
  "M";(0,-2)**\dir{-},
 \end{xy}\qquad
 \begin{xy}
  (-2,4)*+UR{\scriptstyle{i}}="1", 
  (2,4)*+UR{\scriptstyle{j}}="2",
  (0,0)*{\bullet}="M",
  "M";"1"**\dir{-},
  "M";"2"**\dir{-},
  "M";(0,-2)**\dir{-},
 \end{xy}\qquad
 \begin{xy}
  (0,4)*+UR{\scriptstyle{j}}="1", 
  (-4,4)*+UR{\scriptstyle{i}}="2",
  (4,4)*+UR{\scriptstyle{k}}="3",
  (0,0)*{\bullet}="M",
  "M";"1"**\dir{-},
  "M";"2"**\dir{-},
  "M";"3"**\dir{-},
  "M";(0,-2)**\dir{-},
 \end{xy}\end{aligned}
\qquad\qquad\qquad 
 \begin{aligned}\begin{xy}
  (0,0)*{\bullet};
  (0,-2)**\dir{-}
 \end{xy}\qquad  
 \begin{xy}
  (0,8)*+UR{\scriptstyle{A_i}}="A1",
  (0,4)*+UR{\scriptstyle{i}}="1", 
  (0,0)*{\bullet}="M",
  "1";"A1"**\dir{-},
  "M";"1"**\dir{-},
  "M";(0,-2)**\dir{-},
 \end{xy}\qquad
 \begin{xy}
  (-2,8)*+UR{\scriptstyle{A_i}}="A1",
  (2,8)*+UR{\scriptstyle{A_j}}="A2",
  (-2,4)*+UR{\scriptstyle{i}}="1", 
  (2,4)*+UR{\scriptstyle{j}}="2",
  (0,0)*{\bullet}="M",
  "1";"A1"**\dir{-},
  "2";"A2"**\dir{-},
  "M";"1"**\dir{-},
  "M";"2"**\dir{-},
  "M";(0,-2)**\dir{-},
 \end{xy}\qquad
 \begin{xy}
  (-4,8)*+UR{\scriptstyle{A_i}}="A2",
  (0,8)*+UR{\scriptstyle{A_j}}="A1",
  (4,8)*+UR{\scriptstyle{A_k}}="A3",
  (0,4)*+UR{\scriptstyle{j}}="1", 
  (-4,4)*+UR{\scriptstyle{i}}="2",
  (4,4)*+UR{\scriptstyle{k}}="3",
  (0,0)*{\bullet}="M",
  "1";"A1"**\dir{-},
  "2";"A2"**\dir{-},
  "3";"A3"**\dir{-},
  "M";"1"**\dir{-},
  "M";"2"**\dir{-},
  "M";"3"**\dir{-},
  "M";(0,-2)**\dir{-},
 \end{xy}\end{aligned}
$$
\caption{Some objects of $\Psi_1$ and of $\Psi_1\wr \catA$}
\end{figure}

It is standard to note that the category $\Sigma_\ast$ may be identified with
the category $\Psi_1$ of labeled level 1 trees.
In this vein, the wreath product category $\Sigma_\ast \wr \catA$ 
may be identified with $\Psi_1\wr\catA$. 
More generally, the wreath product category 
$\Sigma_\ast \wr \Sigma_\ast$ is equivalent to 
the category $\Psi_2$ of all labeled level 2 trees; and the iterated 
wreath product category $\Sigma_\ast^{\wr n}$ is equivalent to $\Psi_n$ 
the category of all labeled level $n$ trees.

\begin{proposition}
 The following categories are equivalent.
\begin{itemize}
 \item $\Psi_1 \cong \Sigma_\ast$,
 \item $\Psi_1\wr \catA \cong \Sigma_\ast \wr \catA$
 \item $\Psi_n  \cong \Sigma_\ast^{\wr n}$
 \item $\Psi_n \wr \catA \cong \Sigma_\ast^{\wr n} \wr \catA$.
\end{itemize}
\end{proposition}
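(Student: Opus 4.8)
The plan is to establish the equivalences in a single induction on $n$, building each one from the previous by taking a wreath product with $\Sigma_\ast$ (or with $\catA$) and invoking the associativity result Proposition~\ref{prop:wreath_is_associative}. The base case $\Psi_1 \cong \Sigma_\ast$ is the classical observation already recalled in the text: a labeled level $1$ tree is a root, one interior vertex, and a finite set of leaves labeled by a finite set $S$ via $l_v : S \xrightarrow{\cong} \mathrm{In}(v)$, so the data is precisely an object of $\Sigma_\ast$, and tree isomorphisms correspond exactly to set isomorphisms. I would spell out the functor $\Psi_1 \to \Sigma_\ast$ sending a labeled level $1$ tree to its labeling set $S_v$ and check it is fully faithful and essentially surjective. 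The same bijection, carrying along the leaf decorations, gives $\Psi_1 \wr \catA \cong \Sigma_\ast \wr \catA$: an object of $\Psi_1 \wr \catA$ is a labeled level $1$ tree with leaves decorated by objects of $\catA$, which under $l_v$ is exactly a set $\{A_s\}_{s \in S}$, i.e.\ an object of $\Sigma_\ast \wr \catA$ in the sense of Definition~\ref{def:wreath2}, and morphisms match on both sides.

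For the inductive step I would exploit the recursive structure of level trees: a level $n$ tree is obtained from its bottom vertex by attaching, to each incoming edge of the root's child vertex, a level $(n-1)$ tree (some of which may be the trivial ``root plus interior vertex, no leaves'' tree, accounting for the branches-without-leaves phenomenon noted after Figure~\ref{F:2-trees}). More precisely, truncating a labeled level $n$ tree at height $0$ gives a labeled level $1$ tree (an object of $\Sigma_\ast$, with labeling set $S$), together with, for each $s \in S$, a labeled level $(n-1)$ tree sitting above the corresponding edge. This is exactly the data of an object of $\Sigma_\ast \wr \Psi_{n-1}$, once one checks that tree isomorphisms decompose compatibly. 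By the inductive hypothesis $\Psi_{n-1} \cong \Sigma_\ast^{\wr n-1}$, so $\Sigma_\ast \wr \Psi_{n-1} \cong \Sigma_\ast \wr \Sigma_\ast^{\wr n-1}$, and then Proposition~\ref{prop:wreath_is_associative} (in the form $\Sigma_\ast^{\wr 1} \wr \Sigma_\ast^{\wr n-1} \cong \Sigma_\ast^{\wr n}$) finishes the identification $\Psi_n \cong \Sigma_\ast^{\wr n}$. The decorated version $\Psi_n \wr \catA \cong \Sigma_\ast^{\wr n} \wr \catA$ follows by running the same decomposition while carrying the leaf decorations on the top-level subtrees, yielding $\Psi_n \wr \catA \cong \Sigma_\ast \wr (\Psi_{n-1} \wr \catA) \cong \Sigma_\ast \wr (\Sigma_\ast^{\wr n-1} \wr \catA)$, and reassociating via Definition~\ref{def:wreath3} and Proposition~\ref{prop:wreath_is_associative}.

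I expect the main obstacle to be purely bookkeeping rather than conceptual: namely, verifying that the ``truncate at height $0$'' operation is a genuine equivalence of categories and, in particular, that it behaves correctly on the trivial branches (those edges of the height-$0$ vertex that carry a no-leaf level $(n-1)$ subtree) and on the empty label sets, so that the correspondence with $\Sigma_\ast \wr \Psi_{n-1}$ uses the full $\mathtt{FinSet}$-valued wreath construction rather than a surjections-only variant (cf.\ Remark~\ref{rem:partition}). One must also be careful that the labeling isomorphisms $l_v$ at interior vertices of height $\ge 1$ are precisely the data packaged into the $\Psi_{n-1}$ factors, with no double-counting at the height-$0$ vertex. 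Once the decomposition is set up cleanly, functoriality and the check that isomorphisms on each side correspond are routine, and the induction closes. I would present the argument as: base case explicit; inductive step via the truncation equivalence plus associativity; decorated versions as a parallel remark.
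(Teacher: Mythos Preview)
Your proposal is correct. The paper itself offers no proof whatsoever for this proposition---it is stated and immediately followed by \qed, the author treating the identification as evident from the definitions (and it is essentially announced in the paragraph preceding the statement). Your inductive argument via the truncation equivalence $\Psi_n \cong \Sigma_\ast \wr \Psi_{n-1}$ together with Proposition~\ref{prop:wreath_is_associative} is exactly the natural way to make this precise, and the care you take with leafless branches and empty label sets is appropriate; there is nothing further to compare against.
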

\qed

\begin{figure}[h]
$$ 
 \begin{aligned}\begin{xy}
  (0,0)*{\bullet};
  (0,-2)**\dir{-}
 \end{xy}\qquad  
 \begin{xy}
  (-2,4)*+UR{\scriptstyle{i_1}}="1", 
  (2,4)*+UR{\scriptstyle{i_2}}="2", 
  (-2,8)*{\bullet}="N1",
  (2,8)*{\bullet}="N2",
  (0,0)*{\bullet}="M",
  "M";"1"**\dir{-},
  "M";"2"**\dir{-},
  "N1";"1"**\dir{-},
  "N2";"2"**\dir{-},
  "M";(0,-2)**\dir{-},
 \end{xy}\qquad \begin{xy}
  (0, 4)*+UR{\scriptstyle{i}}="1", 
  (0, 8)*{\bullet}="N1",
  (0,12)*+UR{\scriptstyle{j}}="N1a",
  (0, 0)*{\bullet}="M",
  "M";"1"**\dir{-},
  "N1";"1"**\dir{-},
  "N1a";"N1"**\dir{-},
  "M";(0,-2)**\dir{-},
 \end{xy}\qquad
 \begin{xy}
  (-1,4)*+UR{\scriptstyle{i_1}}="1", 
  (3,4)*+UR{\scriptstyle{i_2}}="2",
  (-1,8)*{\bullet}="N1",
  (3,8)*{\bullet}="N2",
  (-3,12)*+UR{\scriptstyle{j_1}}="N1a",
  (1,12)*+UR{\scriptstyle{j_2}}="N1b",
  (0,0)*{\bullet}="M",
  "M";"1"**\dir{-},
  "M";"2"**\dir{-},
  "1";"N1"**\dir{-},
  "2";"N2"**\dir{-},
  "N1a";"N1"**\dir{-},
  "N1b";"N1"**\dir{-},
  "M";(0,-2)**\dir{-},
 \end{xy}\qquad \begin{xy}
   (0, 0)*{\bullet}="M";
  (-6,4)*+UR{\scriptstyle{i_1}}="1", 
  ( 0,4)*+UR{\scriptstyle{i_2}}="2", 
  ( 6,4)*+UR{\scriptstyle{i_3}}="3", 
  (-6,8)*{\bullet}="N1", 
  ( 0,8)*{\bullet}="N2", 
  ( 6,8)*{\bullet}="N3", 
  (-10,12)*+UR{\scriptstyle{j_1}}="N1a",
  (-6,12)*+UR{\scriptstyle{j_2}}="N1b",
  (-2,12)*+UR{\scriptstyle{j_3}}="N1c",
  ( 4,12)*+UR{\scriptstyle{k_1}}="N3a",
  ( 8,12)*+UR{\scriptstyle{k_2}}="N3b",
  "M";(0,-2)**\dir{-},
  "M";"1"**\dir{-},
  "M";"2"**\dir{-},
  "M";"3"**\dir{-},
  "1";"N1"**\dir{-},
  "2";"N2"**\dir{-},
  "3";"N3"**\dir{-},
  "N1a";"N1"**\dir{-},
  "N1b";"N1"**\dir{-},
  "N1c";"N1"**\dir{-},
  "N3a";"N3"**\dir{-},
  "N3b";"N3"**\dir{-},
 \end{xy} % \qquad
 \end{aligned}
% \begin{xy}
%   (0, 8)*{\bullet}="N1", 
%   (0, 4)*+UR{\scriptstyle{i_0}}="1",
%   (16, 8)*{\bullet}="N2",
%   (16, 4)*+UR{\scriptstyle{j_0}}="2",
%   (26, 6)*{\scriptstyle{\dots}},
%   (36, 8)*{\bullet}="Nn",
%   (36, 4)*+UR{\scriptstyle{k_0}}="n",
%   (18, 0)*{\bullet}="M",
%   "M";"1"**\dir{-};"N1"**\dir{-},
%   "M";"2"**\dir{-};"N2"**\dir{-},
%   "M";"n"**\dir{-};"Nn"**\dir{-},
%   "M";(18,-2)**\dir{-},
%   (-6,12)*+UR{\scriptstyle{i_1}}="N1a", 
%   (-2,12)*+UR{\scriptstyle{i_2}}="N1b",
%   (2,12)*{\scriptstyle{\dots}},
%   (6,12)*+UR{\scriptstyle{ {i_r}}}="N1k",
%   "N1";"N1a"**\dir{-},
%   "N1";"N1b"**\dir{-},
%   "N1";"N1k"**\dir{-}, 
%   (10,12)*+UR{\scriptstyle{j_1}}="N2a", 
%   (14,12)*+UR{\scriptstyle{j_2}}="N2b",
%   (18,12)*{\scriptstyle{\dots}},
%   (22,12)*+UR{\scriptstyle{{j_s}}}="N2k",
%   "N2";"N2a"**\dir{-},
%   "N2";"N2b"**\dir{-},
%   "N2";"N2k"**\dir{-},    
%   (30,12)*+UR{\scriptstyle{k_1}}="Nna", 
%   (34,12)*+UR{\scriptstyle{k_2}}="Nnb",
%   (38,12)*{\scriptstyle{\dots}},
%   (42,12)*+UR{\scriptstyle{{k_t}}}="Nnk",
%   "Nn";"Nna"**\dir{-},
%   "Nn";"Nnb"**\dir{-},
%   "Nn";"Nnk"**\dir{-},  
%\end{xy}\end{aligned}
$$
\caption{Some objects of $\Psi_2$}\label{fig:psi_2}
\end{figure}
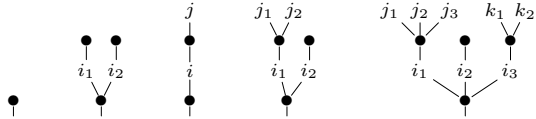

\begin{example}
The elements of $\widetilde{\Sigma}_\ast^{\wr 2}$ corresponding to the 
$\Psi_2$ elements in Figure~\ref{fig:psi_2} are given by the following chains of maps
in $\mathtt{FinSet}$.
\begin{itemize}
\item $\bigl(\star\leftarrow \emptyset \leftarrow \emptyset\bigr)$
\item $\bigl(\star\leftarrow \{i_2, i_2\} \leftarrow \emptyset\bigr)$
\item $\bigl(\star\leftarrow \{i\} \xleftarrow{f} \{j\} \bigr)$  where $f(j)=i$.
\item $\bigl(\star\leftarrow \{i_1, i_2\} \xleftarrow{f} \{j_1,j_2\} \bigr)$  where $f(j_s) = i_1$.
\item $\bigl(\star\leftarrow \{i_1, i_2, i_3\} \xleftarrow{f} \{j_1, j_2, j_3, k_1, k_2\}  \bigr)$
  where $f(j_s) = i_1$ and $f(k_t) = i_3$.
\end{itemize}
\end{example}

Under this identification, the functor $\gamma^2=\partial^2_1:\Psi_2 \to \Psi_1$
operates by forgetting the height $1$ vertices on a level $2$ tree.  Paths from
the height $0$ interior vertex to leaves (on level 2) are replaced by edges;  
the labeling of each such edge is 
given by the path labeling of the path which it replaces, as in Figure~\ref{fig:gamma_tree}.

\begin{figure}[h]
$$\gamma^2 = \partial^2_1\ : \ 
\begin{aligned}\begin{xy}
   (0, 0)*{\bullet}="M";
  (-6,4)*+UR{\scriptstyle{i_1}}="1", 
  ( 0,4)*+UR{\scriptstyle{i_2}}="2", 
  ( 6,4)*+UR{\scriptstyle{i_3}}="3", 
  (-6,8)*{\bullet}="N1", 
  ( 0,8)*{\bullet}="N2", 
  ( 6,8)*{\bullet}="N3", 
  (-10,12)*+UR{\scriptstyle{j_1}}="N1a",
  (-6,12)*+UR{\scriptstyle{j_2}}="N1b",
  (-2,12)*+UR{\scriptstyle{j_3}}="N1c",
  ( 4,12)*+UR{\scriptstyle{k_1}}="N3a",
  ( 8,12)*+UR{\scriptstyle{k_2}}="N3b",
  "M";(0,-2)**\dir{-},
  "M";"1"**\dir{-},
  "M";"2"**\dir{-},
  "M";"3"**\dir{-},
  "1";"N1"**\dir{-},
  "2";"N2"**\dir{-},
  "3";"N3"**\dir{-},
  "N1a";"N1"**\dir{-},
  "N1b";"N1"**\dir{-},
  "N1c";"N1"**\dir{-},
  "N3a";"N3"**\dir{-},
  "N3b";"N3"**\dir{-},
 \end{xy}\end{aligned}
 \ \ \longmapsto \ \ 
 \begin{aligned}\begin{xy}
   (0,0)*{\bullet}="M",
   (-14,8)*+UR{\scriptstyle{i_1j_1}}="N1a",
   (-7,8)*+UR{\scriptstyle{i_1j_2}}="N1b",
   (0,8)*+UR{\scriptstyle{i_1j_3}}="N1c",
   (7,8)*+UR{\scriptstyle{i_3k_1}}="N3a",
   (14,8)*+UR{\scriptstyle{i_3k_2}}="N3b",
   "M";(0,-3)**\dir{-},
   "M";"N1a"**\dir{-},
   "M";"N1b"**\dir{-},
   "M";"N1c"**\dir{-},
   "M";"N3a"**\dir{-},
   "M";"N3b"**\dir{-},
 \end{xy}\end{aligned}
$$
\caption{An example of $\gamma^2:\Psi_2 \to \Psi_1$}\label{fig:gamma_tree}
\end{figure}
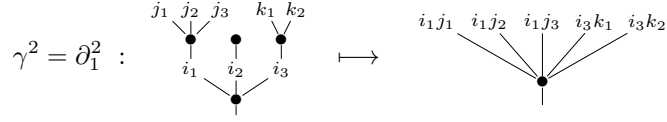

Similarly, the functors $\gamma^n:\Psi_n\to \Psi_1$ operate by forgetting all interior
vertices except for those of height 0; replacing paths by edges
carrying the paths' labels.  The face functors $\partial^n_i:\Psi_n \to \Psi_{n-1}$ for
$1 \le i \le n-1$ are given 
by forgetting only the vertices of level $i$ of a level $n$ tree.  
The disallowed face functor $\partial^n_n$ would forget the leaves.
The degeneracy
functors $s^n_i:\Psi_n \to \Psi_{n+1}$ for $0 \le i \le n$ are given by ``doubling'' -- replace each vertex $v$ at level $i$ by two vertices connected by a directed edge $e_v$, attached to the tree such that all incoming edges connect to source vertex of $e_v$ and the outgoing edge connects to the target vertex  (for the labeling, allow each edge to label itself $l_{t(e_v)}:\{e_v\} \to \{e_v\}$).  Note that the degeneracy $s^n_{n}$ doubles the leaf vertices -- the leaves of the resulting tree are the sources of the edges $e_v$.

% Extending the functors $\gamma_n$ is a functor $\gamma_\ast:\Psi\to \Psi_1$
% which sends all trees with no leaves to the tree
% $\begin{aligned}\begin{xy}(0,0)*{\bullet};(0,-2)**\dir{-}\end{xy}\end{aligned}$
% and otherwise acts by $\gamma_n$ on a trees of height $n$. 
% Since it will generally be clear from context which $\gamma_n$ or $\gamma_\ast$ 
% we are referring to, in practice we will often neglect the subscript.

\begin{figure}[h]
\[ 
 s^1_0\ : \ 
\begin{aligned}\begin{xy}
  (0,4)*+UR{\scriptstyle{j}}="1", 
  (-4,4)*+UR{\scriptstyle{i}}="2",
  (4,4)*+UR{\scriptstyle{k}}="3",
  (0,0)*{\bullet}="M",
  "M";"1"**\dir{-},
  "M";"2"**\dir{-},
  "M";"3"**\dir{-},
  "M";(0,-2)**\dir{-},
 \end{xy}\end{aligned}
\ \ \longmapsto \ \ 
\begin{aligned}\begin{xy}
  (0,4)*+UR{\scriptstyle{j}}="1", 
  (-4,4)*+UR{\scriptstyle{i}}="2",
  (4,4)*+UR{\scriptstyle{k}}="3",
  (0,0)*{\bullet}="M",
  (0,-4)*+UR{\scriptstyle{e}}="U1",
  (0,-8)*{\bullet}="U2",
  "M";"1"**\dir{-},
  "M";"2"**\dir{-},
  "M";"3"**\dir{-},
  "M";"U1"**\dir{-},
  "U1";"U2"**\dir{-},
  "U2";(0,-10)**\dir{-}
 \end{xy}\end{aligned}
\qquad \qquad
 s^1_1\ :\  
\begin{aligned}\begin{xy}
  (0,4)*+UR{\scriptstyle{j}}="1", 
  (-4,4)*+UR{\scriptstyle{i}}="2",
  (4,4)*+UR{\scriptstyle{k}}="3",
  (0,0)*{\bullet}="M",
  "M";"1"**\dir{-},
  "M";"2"**\dir{-},
  "M";"3"**\dir{-},
  "M";(0,-2)**\dir{-},
 \end{xy}\end{aligned} 
\ \ \longmapsto \ \ 
\begin{aligned}\begin{xy}
  (0,12)*+UR{\scriptstyle{j}}="1t",
  (-4,12)*+UR{\scriptstyle{i}}="2t",
  (4,12)*+UR{\scriptstyle{k}}="3t",
  (0,8)*{\bullet}="1c",
  (-4,8)*{\bullet}="2c",
  (4,8)*{\bullet}="3c",
  (0,4)*+UR{\scriptstyle{e_2}}="1", 
  (-4,4)*+UR{\scriptstyle{e_1}}="2",
  (4,4)*+UR{\scriptstyle{e_3}}="3",
  (0,0)*{\bullet}="M",
  "M";(0,-2)**\dir{-},
  "1t";"1c"**\dir{-},  "1c":"1"**\dir{-},  "M";"1"**\dir{-},
  "2t";"2c"**\dir{-},  "2c":"2"**\dir{-},  "M";"2"**\dir{-},
  "3t";"3c"**\dir{-},  "3c":"3"**\dir{-},  "M";"3"**\dir{-}
 \end{xy}\end{aligned} 
\]
\caption{An example of $s^1_0,s^1_1:\Psi_1 \to \Psi_2$}\label{fig:degeneracy}
\end{figure}

\begin{remark}
 We very purposefully do not use the 
 notation $\Upsilon$ for our category of level trees, 
 since that notation is already commonly used to denote
 the category consisting of all trees.  The category $\Psi$ differs from this both
 on the level of objects (only level trees) and on the level of morphisms (only isomorphisms
 of trees -- in particular, no ``edge contraction'' maps).
\end{remark}

\begin{remark}\label{rem:hat_empty2}
 Note that $\Psi$ is not isomorphic to the category $\Psi_\ast = \coprod_n \Psi_n$.
 Write $\hat \emptyset_n$ for the full subcategory of $\Psi_n$ consisting of trees
 with no leaves.  Then $\hat \emptyset_n$ is a full subcategory of $\hat \emptyset_{n+1}$.
 In terms of the $\Psi_n$, the category $\Psi$ itself is given by
$$\Psi \cong \Psi_1 \coprod_{\hat \emptyset_1} \Psi_2 \coprod_{\hat \emptyset_2}
         \Psi_3 \coprod_{\hat \emptyset_3} \Psi_4 \cdots$$
In the notation of the previous subsection, an element of $\hat \emptyset_n$ is 
equivalent to a contravariant functor $\mathtt{[n]}\to \mathtt{FinSet}$ sending $n$ to the 
empty-set as in Remark~\ref{rem:hat_empty1}.
\end{remark}

\section{Symmetric sequences, composition products, and cooperads}

\subsection{Symmetric Sequences}

Let $(\catC, \otimes, 1_{\!\otimes\!})$ be a symmetric monoidal category with monoidal unit 
$1_{\!\otimes\!}$.
In order to have all desired Kan extensions exist, we will further require that
$\catC$ is cocomplete.   Write $\star_\catC$ for the final object of $\catC$.  
[In order to dualize to operads, we would require $\catC$ be complete with
initial object $\emptyset_\catC$.]  

\begin{definition}
A symmetric sequence is a functor $A:\Sigma_\ast \to \catC$.
\end{definition}

Recall that a functor $\Sigma_\ast \to \catC$ is equivalent to a sequence
of objects $\{A(n)\}_{n\ge 0}$ of $\catC$ along with a symmetric group action on each $A(n)$.
We will make use of this viewpoint when convenient without further comment. 
If $A$ is a symmetric sequence, then we will refer to $A(n)$ as the ``$n$-ary part of $A$''
since for operads it will encode $n$-ary algebra operations.  
(The ``$0$-ary operations" require no input.  For example, in the category of algebras over a
 field, elements of the base field are all $0$-ary operations.)

\subsection{Composition of Symmetric Sequences}

We define a ``product'' operation on symmetric sequences.  It is important
to note that our product will not itself be a symmetric sequence.
Instead it is a larger diagram, reflecting 
a larger group of symmetries.  The traditional composition product of operads as well as 
our cooperad composition product are Kan extensions of this symmetric sequence product.

\begin{definition}\label{D:composition product}
Given $A_1, \dots, A_n:\Sigma_\ast \to \catC$ define 
$(A_1\bcirc \cdots \bcirc A_n):\widetilde{\Sigma}_\ast^{\wr n} \to \catC$ by
\[
\bigl(\star \xleftarrow{f_0} S_1 \xleftarrow{f_1} \cdots \xleftarrow{f_{n-1}}S_n \bigr) 
   \longmapsto
\bigotimes_{0\le i \le n-1} \left( \bigotimes_{s \in S_{i}} A_{i+1}\bigl(f_{i}^{-1}(s)\bigr)\right)
\] 
with the convention that $\star = S_0$.

Define $A_1\pbcirc\cdots\pbcirc A_n$ to be the right Kan extension of $A_1\bcirc\cdots\bcirc A_n$ over the map 
$\gamma:\Sigma_\ast^{\wr n} \longrightarrow \Sigma_\ast$.
$$\xymatrixnocompile@R=20pt@C=80pt{ 
  \Sigma_\ast 
     \ar@/^/@{-->}[dr]^{\qquad \quad A_1\pbcirc \cdots \pbcirc A_n\ =\ \Rkan_{\gamma}A_1\bcirc \cdots\bcirc A_n } & 
\\
 \Sigma_\ast \wr \cdots\wr \Sigma_\ast  
    \ar[r]_(.55){A_1\bcirc\cdots\bcirc A_n}="b" \ar[u]^{\gamma} &
 \catC
\ar@{=>}"b"+<0pt,25pt>;"b"_(.3){\iota}
}$$
%$$\xymatrixnocompile@R=20pt@C=45pt{ 
%  \Sigma_\ast 
% \ar@{-->}[dr]_{A_1\pbcirc \cdots \pbcirc A_n\ =\ \Rkan_{\gamma}A_1\bcirc \cdots\bcirc A_n\qquad\ } & 
%    \Sigma_\ast \wr \cdots\wr \Sigma_\ast 
% \ar[d]|{A_1\bcirc\cdots\bcirc A_n} \ar[l]_{\gamma}  \\
%  & \catC  
%}$$
Write $\iota:(A_1\pbcirc\cdots\pbcirc A_n)\,\gamma \to A_1\bcirc\cdots\bcirc A_n$ for the universal natural transformation.

[Dually, define $A_1\cbcirc\cdots \cbcirc A_n$ to be the left Kan extension over $\gamma$.]
\end{definition}

Using the notation of Definition~\ref{def:wreath3}, we can generalize the above 
definition slightly in order to discuss associativity.

\begin{definition}
Given $A:\Sigma_\ast^{\wr n} \rightarrow \catC$ and $B:\catA\rightarrow \catC$, define 
$(A\bcirc B):\Sigma_\ast^{\wr n} \wr \catA \longrightarrow \catC$ by 
$$\bigl(A\bcirc B\bigr) \Bigl(F,\ \{A_s\}_{s\in F(n)}\Bigr) = 
    A(F) \otimes \left( \bigotimes_{s\in F(n)} B(A_s) \right).$$
\end{definition}

Short calculations yield the following propositions.

\begin{proposition}
 The operation $\bcirc$ is associative.
 \[ 
  (A_1 \bcirc A_2) \bcirc A_3 \ \cong \ 
  A_1 \bcirc A_2 \bcirc A_3\  \cong \ 
  A_1 \bcirc (A_2 \bcirc A_3)   
\]
\end{proposition}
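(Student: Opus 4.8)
The plan is to prove associativity by a direct computation on objects, reducing everything to the associativity and symmetry constraints of $\otimes$ in $\catC$. The key observation is that both the iterated product of Definition~\ref{D:composition product} and the generalized product of Definition~\ref{def:wreath3} have values which are \emph{unordered} tensors: each factor is a piece $A_{i+1}(\text{a finite set})$, one such factor per interior vertex of the associated level tree. Once the three source categories are identified with $\Sigma_\ast^{\wr 3}$ via Proposition~\ref{prop:wreath_is_associative} (suppressing the equivalences $\widetilde{\Sigma}_\ast^{\wr n}\simeq\Sigma_\ast^{\wr n}$ throughout), the three functors in question will, on each object, tensor together exactly the same collection of factors, merely in a different bracketing and order.

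Concretely, first I would fix an object $\bigl(\star \xleftarrow{f_0} S_1 \xleftarrow{f_1} S_2 \xleftarrow{f_2} S_3\bigr)$ of $\widetilde{\Sigma}_\ast^{\wr 3}$ (with $\star = S_0$ a one-point set) and unwind Definition~\ref{D:composition product}:
\[
(A_1\bcirc A_2\bcirc A_3)\bigl(\star\leftarrow S_1\leftarrow S_2\leftarrow S_3\bigr) \;=\; A_1(S_1)\otimes\Bigl(\bigotimes_{s\in S_1}A_2\bigl(f_1^{-1}(s)\bigr)\Bigr)\otimes\Bigl(\bigotimes_{t\in S_2}A_3\bigl(f_2^{-1}(t)\bigr)\Bigr),
\]
the first factor being $\bigotimes_{s\in S_0}A_1(f_0^{-1}(s)) = A_1(S_1)$ since $S_0 = \star$.

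Next I would treat the two bracketings. For $(A_1\bcirc A_2)\bcirc A_3$: here $A_1\bcirc A_2\colon\Sigma_\ast^{\wr 2}\to\catC$, and by Definition~\ref{def:wreath3} an object of $\Sigma_\ast^{\wr 2}\wr\Sigma_\ast$ is a pair $\bigl(F,\{B_t\}_{t\in F(2)}\bigr)$ with $F=(\star\leftarrow S_1\leftarrow S_2)$ and $B_t\in\mathtt{FinSet}$; under the equivalence $\Sigma_\ast^{\wr 2}\wr\Sigma_\ast\cong\Sigma_\ast^{\wr 3}$ of Proposition~\ref{prop:wreath_is_associative} this is the chain with $S_3=\coprod_{t\in S_2}B_t$ and $f_2$ the fibrewise projection, so $f_2^{-1}(t)=B_t$, and the generalized product evaluates to $\bigl(A_1(S_1)\otimes\bigotimes_{s\in S_1}A_2(f_1^{-1}(s))\bigr)\otimes\bigotimes_{t\in S_2}A_3(B_t)$. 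For $A_1\bcirc(A_2\bcirc A_3)$: here $A_1\colon\Sigma_\ast^{\wr 1}=\Sigma_\ast\to\catC$ and $A_2\bcirc A_3\colon\Sigma_\ast^{\wr 2}\to\catC$, and an object of $\Sigma_\ast\wr\Sigma_\ast^{\wr 2}$ is $\bigl(F,\{G_s\}_{s\in F(1)}\bigr)$ with $F=(\star\leftarrow S_1)$ and each $G_s=(\star\leftarrow T_s\xleftarrow{g_s}U_s)$ in $\Sigma_\ast^{\wr 2}$; under $\Sigma_\ast\wr\Sigma_\ast^{\wr 2}\cong\Sigma_\ast^{\wr 3}$ this is the chain with $S_2=\coprod_{s\in S_1}T_s$, $S_3=\coprod_{s\in S_1}U_s$, and $f_1$, $f_2$ the evident maps (so $f_1^{-1}(s)=T_s$ and $f_2$ restricts to $g_s$, i.e.\ $f_2^{-1}(t)=g_s^{-1}(t)$ for $t\in T_s$), and one reads off $A_1(S_1)\otimes\bigotimes_{s\in S_1}\bigl(A_2(T_s)\otimes\bigotimes_{t\in T_s}A_3(g_s^{-1}(t))\bigr)$. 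Reindexing $\coprod_{s\in S_1}\coprod_{t\in T_s}$ as $\coprod_{t\in S_2}$, each of the three displays is the tensor of the single object $A_1(S_1)$, the objects $A_2(f_1^{-1}(s))$ for $s\in S_1$, and the objects $A_3(f_2^{-1}(t))$ for $t\in S_2$; they differ only by bracketing and ordering, hence are canonically isomorphic through the associators and symmetries of $\catC$.

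Finally I would check naturality: a morphism of $\Sigma_\ast^{\wr 3}$ is a levelwise family of bijections of the $S_i$ (together with the $\catA$-isomorphisms in the wreath-with-$\catA$ descriptions), and all three functors act by permuting and relabelling precisely the factors just listed, so the coherence isomorphisms of $\catC$ are natural. The same bookkeeping handles the general associativity of $A_1\bcirc\cdots\bcirc A_n$ (and, via $\Psi_n\cong\Sigma_\ast^{\wr n}$, corresponds to the evident fact that the several ways of grafting level trees onto leaves yield the same level tree, with one tensor factor per interior vertex). I expect the only real friction to be making the dictionary of Proposition~\ref{prop:wreath_is_associative} explicit enough --- matching $f_i^{-1}(s)$ on the $\Sigma_\ast^{\wr 3}$ side with the decorating sets $B_t$, $T_s$, $U_s$ on the wreath side; once that is pinned down, the statement is just the coherence theorem for symmetric monoidal categories.
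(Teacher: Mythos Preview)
Your proposal is correct and is exactly the ``short calculation'' the paper alludes to: the paper states this proposition immediately after the sentence ``Short calculations yield the following propositions'' and gives no proof beyond a \qed, so your explicit unwinding of Definitions~\ref{D:composition product} and the generalized product on $\Sigma_\ast^{\wr n}\wr\catA$, together with the identifications of Proposition~\ref{prop:wreath_is_associative}, is precisely the intended argument. There is nothing to compare---you have simply written out what the paper leaves implicit.
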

\qed

\begin{proposition}
Given $A$, $B$ symmetric sequences, $A\pbcirc B$ is given by
\[
 (A\pbcirc B)(n) \ =  \ 
    \prod_{k\ge 0} \,\left(\, \prod_{\sum r_{\!_i}\,=\,n} \!\!\!
        A(k)\otimes B(r_1)\otimes \cdots \otimes B(r_k)
        \right)^{_{\displaystyle \Sigma_k}} 
\]
\end{proposition}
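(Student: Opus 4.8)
The plan is to unwind the right Kan extension formula directly. Since $\gamma = \gamma^2 : \Sigma_\ast\wr\Sigma_\ast \to \Sigma_\ast$ sends $\{S_t\}_{t\in T} \mapsto \coprod_T S_t$, the fiber of $\gamma$ over the $n$-element set $\underline n$ is the comma category whose objects are chains $\bigl(\star \leftarrow \underline n \xleftarrow{f} S\bigr)$ together with an isomorphism $\coprod_{t} f^{-1}(t) \cong \underline n$; equivalently, pairs consisting of a $k$-element set $\underline k$ (the first level, i.e. the value $S_1$) and a surjection-free map $f:\underline k \leftarrow S$ with a chosen bijection of $S$ with $\underline n$. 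By the pointwise formula for right Kan extensions in the cocomplete — here we need the limit, so we use that $\catC$ is cocomplete and the fiber categories are essentially small and in fact groupoids — category $\catC$, $(A\pbcirc B)(n) = \lim_{(\gamma\downarrow \underline n)} (A\bcirc B)$. First I would identify this comma category explicitly: its connected components are indexed by $k\ge 0$ and by orbits of decompositions $\underline n = \coprod_{i=1}^k R_i$ under relabeling, and the automorphism group of a component is the subgroup of $\Sigma_n \times \Sigma_k$ (acting on labels) stabilizing the decomposition, which is the wreath-type group $\bigl(\prod_i \Sigma_{R_i}\bigr) \rtimes (\text{stabilizer in } \Sigma_k)$.

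Next I would compute the limit componentwise. Over each component the diagram $(A\bcirc B)$ takes the constant-up-to-iso value $A(k)\otimes B(r_1)\otimes\cdots\otimes B(r_k)$ where $r_i = |R_i|$, and the limit over a connected groupoid is the invariants under the automorphism group. Collating components with the same unordered multiset $\{r_1,\dots,r_k\}$ but all orderings, one replaces the sum over orbits-with-stabilizer-invariants by the single expression $\bigl(\prod_{r_1+\cdots+r_k = n} A(k)\otimes B(r_1)\otimes\cdots\otimes B(r_k)\bigr)^{\Sigma_k}$, where now the product is over all ordered tuples and $\Sigma_k$ permutes the factors (this is the standard repackaging: invariants of a single orbit = invariants over all orderings, because inducing up and taking invariants agree for finite groups when one uses products). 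Then one takes the product over all $k\ge 0$. The $\Sigma_{R_i}$-part of the stabilizer acts trivially on $A(k)\otimes B(r_1)\otimes\cdots\otimes B(r_k)$ precisely because the value of $A\bcirc B$ at $\bigl(\star\leftarrow \underline k \leftarrow S\bigr)$ was defined as $\bigotimes_i B(f^{-1}(i))$, i.e. only depends on $S$ through the unordered fibers $f^{-1}(i)$; so those automorphisms are already quotiented out and contribute nothing, leaving only the $\Sigma_k$-invariants in the final formula.

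The main obstacle is the bookkeeping in the second step: carefully checking that the groupoid of objects of $(\gamma\downarrow\underline n)$ decomposes as claimed, that the relevant automorphism groups are exactly what I wrote, and that "limit over the groupoid" collapses to "$\Sigma_k$-invariants of the product over ordered tuples" — in particular that the $\Sigma_{R_i}$ factors of the automorphism groups act trivially and hence drop out, while the block-permutation part survives as the $\Sigma_k$ in the statement. I would also need to be slightly careful about the $k=0$ term (the empty decomposition of $\underline n$, which is empty unless $n=0$, contributing $A(0)$ when $n=0$) and about convergence/size issues, but these are handled by the standing cocompleteness hypothesis and the fact that each fiber groupoid is essentially small. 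Everything else — naturality in $n$, compatibility with the $\Sigma_n$-action making $(A\pbcirc B)$ a genuine symmetric sequence — is formal from the universal property of the Kan extension.
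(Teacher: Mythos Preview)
The paper gives no proof of this proposition at all --- it is simply declared a ``short calculation'' and marked $\qed$. Your approach via the pointwise limit formula for the right Kan extension is exactly the calculation intended, and the overall strategy (identify the comma category as a groupoid, compute the limit as invariants) is correct.

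That said, your description of the comma category has a genuine error that you then spend effort undoing. An object of $(\underline n \downarrow \gamma)$ is a pair $\bigl(\{S_t\}_{t\in T},\ \phi:\underline n \xrightarrow{\cong} \coprod_t S_t\bigr)$; a morphism must be compatible with $\phi$, and this forces the map on the $S$-level to be the identity once you use $\phi$ to identify everything with $\underline n$. Concretely, the comma category is equivalent to the groupoid whose objects are set maps $f:\underline n \to T$ and whose morphisms are bijections $\sigma:T\to T'$ with $\sigma f = f'$. There is no $\Sigma_n$-factor and no $\prod_i \Sigma_{R_i}$-factor in the automorphism group: the automorphisms of $(T,f)$ are only those $\sigma\in\Sigma_T$ fixing $f$, which (for $|T|=k$) is just the stabilizer of $f$ in $\Sigma_k$. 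Your subsequent argument that the $\Sigma_{R_i}$-part ``acts trivially and drops out'' is therefore disposing of a ghost; worse, the stated reason (``only depends on $S$ through the unordered fibers'') is false, since $B(f^{-1}(i))$ genuinely carries a $\Sigma_{f^{-1}(i)}$-action via $B$. The clean statement is that for fixed $k$ the comma category is the action groupoid $\bigl[\mathrm{Map}(\underline n,\underline k)\,/\!/\,\Sigma_k\bigr]$, and the limit of any functor $F$ over an action groupoid $[S/\!/G]$ is $\bigl(\prod_{s\in S} F(s)\bigr)^G$. This gives the $\Sigma_k$-invariants directly, with no extraneous wreath factors to argue away.

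Two smaller points. First, right Kan extensions are computed by \emph{limits}, so the relevant hypothesis is completeness, not cocompleteness; you flag this yourself mid-sentence but should simply say it. Second, the honest inner product is over all functions $\underline n \to \underline k$ (equivalently, ordered set-decompositions of $\underline n$ into $k$ possibly-empty blocks), not merely over tuples of sizes $(r_1,\dots,r_k)$; the proposition's formula is the usual skeletal shorthand in which $B(r_i)$ stands for $B$ evaluated on the actual $i$-th block.
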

\qed

Note that $\pbcirc$ is probably not associative.  This will be discussed in greater
detail in the next section (see Proposition~\ref{L:parenthesizations}).  The operation
$\bcirc$ is clearly functorial.  If $F:A_1 \to A_2$ and $G:B_1\to B_2$ are natural
transformations of functors $A_1,A_2:\Sigma_\ast^{\wr n} \to \catC$ and
$B_1,B_2:\Sigma_\ast^{\wr m} \to \catC$, then we write 
$(F\bcirc G): (A_1 \bcirc B_1) \to (A_2 \bcirc B_2)$ for the induced natural
transformation of functors 
$\Sigma_\ast^{\wr n}\wr\Sigma_\ast^{\wr m} \to \catC$.

\subsection{Cocomposition and Coface Maps}

\begin{definition}\label{def:seq_with_cocomposition}
A symmetric sequence with cocomposition is 
$(A,\ \tilde\Delta)$ where $\tilde\Delta$ is a   
cocomposition natural transformation 
$\tilde\Delta:A\,\gamma^2 \longrightarrow A\bcirc A$
of functors $\Sigma_\ast\wr \Sigma_\ast \to \catC$ compatible with the face maps
$\partial^3_1 = (\gamma^2 \wr \mathrm{Id})$ and 
$\partial^3_2 = (\mathrm{Id} \wr \gamma^2)$.

Write $\Delta$ for the associated universal natural transformation of symmetric sequences 
  $\Delta:A\longrightarrow A\pbcirc A$.
\end{definition}

In other words, the following diagram of functors
$\Sigma_\ast\wr \Sigma_\ast \wr\Sigma_\ast \to \catC$ should commute.
\begin{equation} \label{eqn:cocomp}\begin{aligned}
\xymatrixnocompile@R=2pt{
& (A\bcirc A) (\gamma^2\wr \text{Id}) \ar@/^/[dr]^(.55){\tilde\Delta\wr\text{Id}} & \\
A \gamma^3 
   \ar@/^/[ur]^(.35){\tilde\Delta} \ar@/_/[dr]_(.35){\tilde\Delta} & &
  \hskip -10pt A\bcirc A\bcirc A \\
& (A\bcirc A) (\text{Id}\wr \gamma^2) \ar@/_/[ur]_(.55){\text{Id}\wr\tilde\Delta} & 
}\end{aligned}\end{equation}
The upper path uses the factorization
$\gamma^3 = \partial^2_1 \circ \partial^3_1 = \gamma^2 \circ (\gamma^2 \wr \mathrm{Id})$ and the lower path uses the factorization
$\gamma^3 = \partial^2_1 \circ \partial^3_2 = \gamma^2 \circ (\mathrm{Id} \wr \gamma^2)$.

Applying Proposition~\ref{prop:wreath_is_associative}, we may generalize $\tilde\Delta$
to the following maps.
\begin{definition}
Given a symmetric sequence with cocomposition $(A,\, \tilde\Delta)$ define 
associated natural transformations
$\tilde\Delta^n_i: A^{\bcirc (n-1)} \partial^n_i \to A^{\bcirc n}$, for $1\le i\le (n-1)$,
 which apply $\tilde\Delta$ 
at position $i$.  (Thus $\tilde\Delta = \tilde\Delta^2_1$.)
\end{definition}

These natural transformations induce coface maps in the following manner.
Since $\gamma^{n-1}\,\partial^n_i = \gamma^n$ and $\partial^n_i$ is epi,
transformations $B\,\gamma^{n} \to A^{\bcirc (n-1)}\,\partial^n_i$
are equivalent to transformations $B\gamma^{n-1} \to A^{\bcirc (n-1)}$ (where
$B:\Sigma_\ast \to \catC$ is some symmetric sequence).  Therefore
there is an equality of right Kan extensions
$\Rkan_{\gamma^n}\bigl(A^{\bcirc (n-1)}\,\partial^n_i\bigr) = 
 \Rkan_{\gamma^{n-1}}\bigl(A^{\bcirc(n-1)}\bigr) = A^{\pbcirc (n-1)}$.
We will make extensive use of this equality in later sections without further comment.

Define $\Delta^n_i:A^{\pbcirc (n-1)} \to A^{\pbcirc n}$ to be the following map.
\begin{equation}\begin{aligned}
\xymatrixnocompile@R=5pt@C=20pt{
     A^{\pbcirc (n-1)} \ar@{{}{=}{}}[d]  &&
 A^{\pbcirc n} \ar@{{}{=}{}}[d]   
\\ 
 \Rkan_{\gamma^n}\bigl(A^{\bcirc (n-1)}\,\partial^n_i\bigr) 
     \ar[rr]^(.55){\Rkan_{\gamma^n} (\tilde\Delta^n_i)} &&
 \Rkan_{\gamma^n}\bigl(A^{\bcirc n}\bigr)
} 
\end{aligned}\end{equation}

Under right Kan extension, 
%(and using the equality $\Rkan_{\gamma^3} \bigl((A\bcirc A)\,\partial^3_1\bigr) = \Rkan_{\gamma^2} (A\bcirc A)$), 
Diagram~(\ref{eqn:cocomp}) translates to the following diagram of symmetric sequences.
\begin{equation}\begin{aligned}
\xymatrixnocompile@R=2pt{
& A\pbcirc A \ar@/^/[dr]^(.45){\Delta^3_1} & \\
A  
   \ar@/^/[ur]^(.4){\Delta} \ar@/_/[dr]_(.4){\Delta} & &
  \hskip -10pt  A\pbcirc A\pbcirc A \\
& A\pbcirc A  \ar@/_/[ur]_(.45){\Delta^3_2} & 
} 
\end{aligned}\end{equation}
Combined with 
Proposition~\ref{prop:wreath_is_associative}, this generalizes to the following.

\begin{proposition}\label{prop:delta_equalizer}
Let $(A,\, \tilde\Delta)$  be a symmetric sequence with cocomposition.
Then the transformation $\Delta^n_i:A^{\pbcirc (n-1)}\to A^{\pbcirc n}$ equalizes the two
 transformations 
$\Delta^{n+1}_i,\,\Delta^{n+1}_{i+1}:A^{\pbcirc n} \rightrightarrows A^{\pbcirc (n+1)}$.

More generally, 
$ \Delta^{n+1}_{j}\, \Delta^n_i = \Delta^{n+1}_i\, \Delta^n_{j-1}$ for $j > i$.
\end{proposition}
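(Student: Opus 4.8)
The plan is to prove the general relation $\Delta^{n+1}_{j}\,\Delta^n_i = \Delta^{n+1}_i\,\Delta^n_{j-1}$ for $j>i$; the equalizer statement is its special case $j=i+1$, since then it reads $\Delta^{n+1}_{i+1}\,\Delta^n_i = \Delta^{n+1}_i\,\Delta^n_i$. The overall strategy is to transport the identity down to the level of the $\tilde\Delta^n_i$, where it reduces to the coassociativity axiom of Definition~\ref{def:seq_with_cocomposition} together with the functoriality of $\bcirc$, and then to push it back up through right Kan extension.

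\emph{Step 1: combinatorics of the face functors.} Because every $\partial^n_i$ is assembled from $\gamma^2=\partial^2_1$ by Proposition~\ref{prop:wreath_is_associative}, the simplicial face relation holds strictly as an identity of functors,
\[
\partial^n_i\circ\partial^{n+1}_j \ =\ \partial^n_{j-1}\circ\partial^{n+1}_i\qquad (j>i),
\]
and both sides are epimorphisms with $\gamma^{n-1}\circ\partial^n_i\circ\partial^{n+1}_j=\gamma^{n+1}$ (using $\gamma^{m-1}\partial^m_k=\gamma^m$). Hence, via the identification $\Rkan_{\gamma^{n+1}}(X\,\partial^{n+1}_k)=\Rkan_{\gamma^n}(X)$ recalled after Definition~\ref{def:seq_with_cocomposition}, both composites $\Delta^{n+1}_j\,\Delta^n_i$ and $\Delta^{n+1}_i\,\Delta^n_{j-1}$ can be written as $\Rkan_{\gamma^{n+1}}$ applied to a composite of $\tilde\Delta$-transformations whiskered with face functors: concretely, $\Delta^{n+1}_j\,\Delta^n_i$ is $\Rkan_{\gamma^{n+1}}$ of $\tilde\Delta^{n+1}_j$ precomposed with the whiskering of $\tilde\Delta^n_i$ by $\partial^{n+1}_j$, and symmetrically on the other side. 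These whiskered composites are parallel transformations of functors $\Sigma_\ast^{\wr(n+1)}\to\catC$ with common source $A^{\bcirc(n-1)}\,\partial^n_i\,\partial^{n+1}_j = A^{\bcirc(n-1)}\,\partial^n_{j-1}\,\partial^{n+1}_i$ and common target $A^{\bcirc(n+1)}$. By functoriality of right Kan extension it therefore suffices to prove their equality at this bottom level.

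\emph{Step 2: the bottom-level identity.} Here the two transformations differ only in the location of the two applications of $\tilde\Delta$, and one checks equality by cases. If $j>i+1$, the two copies of $\tilde\Delta$ occupy non-overlapping wreath-product slots: writing $\tilde\Delta^n_i=\mathrm{Id}^{\bcirc(i-1)}\bcirc\tilde\Delta\bcirc\mathrm{Id}^{\bcirc(n-i-1)}$ and similarly for the others, the interchange law $(F\bcirc G)\circ(F'\bcirc G')=(F F')\bcirc(G G')$ for $\bcirc$ (with identities in the complementary slots) makes the two transformations literally equal after re-indexing. If $j=i+1$, the equality is exactly the coassociativity constraint: Diagram~(\ref{eqn:cocomp}) states that $(\tilde\Delta\wr\mathrm{Id})\circ\tilde\Delta=(\mathrm{Id}\wr\tilde\Delta)\circ\tilde\Delta$ as transformations $A\gamma^3\to A\bcirc A\bcirc A$, and applying $\mathrm{Id}^{\bcirc(i-1)}\bcirc(-)\bcirc\mathrm{Id}^{\bcirc(n-i-1)}$ — legitimate by Proposition~\ref{prop:wreath_is_associative}, which identifies the relevant face maps with $\mathrm{Id}\wr\gamma^2\wr\mathrm{Id}$ — turns this into precisely the $j=i+1$ case (the left-hand side splitting slot $i$ then its right child, the right-hand side splitting slot $i$ then its left child).

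\emph{Conclusion and obstacle.} Applying $\Rkan_{\gamma^{n+1}}$ to the bottom-level identity and unwinding the identifications of Step~1 yields $\Delta^{n+1}_j\,\Delta^n_i=\Delta^{n+1}_i\,\Delta^n_{j-1}$, and the case $j=i+1$ is the equalizer statement. I expect the only real difficulty to be organizational: being careful that the composite $\Delta^{n+1}_j\,\Delta^n_i$ is genuinely recognized as a single $\Rkan_{\gamma^{n+1}}$ of a whiskered composite — in particular that the universal transformations $\iota$ attached to the various presentations $\Rkan_{\gamma^n}(A^{\bcirc n})$ and $\Rkan_{\gamma^{n+1}}(A^{\bcirc n}\,\partial^{n+1}_k)$ of $A^{\pbcirc n}$ are compatible, so that naturality of $\Rkan$ can be invoked — rather than any subtlety in the two cases of Step~2, which are short once the framework is in place.
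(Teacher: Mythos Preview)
Your proposal is correct and spells out exactly the argument the paper leaves implicit (the paper offers no proof beyond \qed): the cosimplicial identities for the $\Delta^n_i$ follow from the simplicial identities $\partial^n_i\,\partial^{n+1}_j=\partial^n_{j-1}\,\partial^{n+1}_i$ of Proposition~\ref{prop:wreath_is_associative} together with the coassociativity axiom~(\ref{eqn:cocomp}) at the $\tilde\Delta$ level, pushed through $\Rkan_{\gamma^{n+1}}$. Your case split ($j>i+1$ via the interchange law for $\bcirc$, $j=i+1$ via Diagram~(\ref{eqn:cocomp})) and your care with the identification $\Rkan_{\gamma^{n+1}}(X\,\partial^{n+1}_k)=\Rkan_{\gamma^n}(X)$ are precisely the ingredients the paper is tacitly invoking.
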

\qed

\begin{corollary}
Let $(A,\, \tilde\Delta)$  be a symmetric sequence with cocomposition.
There are canonical, unique maps
 $\Delta^{[n]} : A \to A^{\pbcirc n}$.
(Given by taking any chain of compositions $\Delta^n_{i_n}\cdots\Delta^1_{i_1}$.)
\end{corollary}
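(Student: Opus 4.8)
The plan is to induct on $n$, carrying along the \emph{uniqueness} rather than merely the existence, and to make the inductive step go through using the cosimplicial-type identities of Proposition~\ref{prop:delta_equalizer}. Set $\Delta^{[1]} = \mathrm{id}_A : A \to A^{\pbcirc 1}$ and $\Delta^{[2]} = \Delta = \Delta^2_1 : A \to A^{\pbcirc 2}$; for $n \le 2$ there is at most one chain, so there is nothing to prove. Now suppose, for some $n \ge 2$, that $\Delta^{[n]} : A \to A^{\pbcirc n}$ is well-defined and equals every composite $\Delta^n_{i_n} \circ \cdots \circ \Delta^2_{i_2}$; in particular $\Delta^{[n]} = \Delta^n_j \circ \Delta^{[n-1]}$ for every admissible index $j$, i.e.\ every $j$ with $1 \le j \le n-1$.

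Every chain of length $n+1$ has the shape $\Delta^{n+1}_{i_{n+1}} \circ \bigl(\Delta^n_{i_n}\circ\cdots\circ\Delta^2_{i_2}\bigr)$, which by the inductive hypothesis equals $\Delta^{n+1}_{j} \circ \Delta^{[n]}$ for some $j$ with $1 \le j \le n$. So it is enough to show $\Delta^{n+1}_j \circ \Delta^{[n]}$ does not depend on $j \in \{1,\dots,n\}$, and for that it is enough to compare the adjacent indices $j$ and $j+1$ for $1 \le j \le n-1$. Fix such a $j$ and rewrite $\Delta^{[n]} = \Delta^n_j \circ \Delta^{[n-1]}$ as above; the ``$j' > i$'' instance of Proposition~\ref{prop:delta_equalizer} with $j' = j+1$, $i = j$ gives $\Delta^{n+1}_{j+1} \circ \Delta^n_j = \Delta^{n+1}_j \circ \Delta^n_j$, whence
\[
\Delta^{n+1}_{j+1} \circ \Delta^{[n]} \;=\; \Delta^{n+1}_{j+1} \circ \Delta^n_j \circ \Delta^{[n-1]} \;=\; \Delta^{n+1}_j \circ \Delta^n_j \circ \Delta^{[n-1]} \;=\; \Delta^{n+1}_j \circ \Delta^{[n]}.
\]
Chaining these equalities over $j = 1, \dots, n-1$ shows the $n$ maps $\Delta^{n+1}_1\circ\Delta^{[n]}, \dots, \Delta^{n+1}_n\circ\Delta^{[n]}$ all agree; call their common value $\Delta^{[n+1]}$. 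This closes the induction and proves the corollary.

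The only real content is the inductive step, and within it the single move that matters: re-splitting $\Delta^{[n]}$ as $\Delta^n_j \circ \Delta^{[n-1]}$ for exactly the index $j$ needed to apply the relation $\Delta^{n+1}_{j+1}\circ\Delta^n_j = \Delta^{n+1}_j\circ\Delta^n_j$. This is precisely where the strong inductive hypothesis (independence of the length-$n$ chain from all its choices, not merely that some chain exists) is indispensable, which is why uniqueness must be threaded through the induction rather than proved afterwards. The base cases and the bookkeeping that every length-$(n+1)$ chain is a length-$n$ chain followed by one $\Delta^{n+1}_{i_{n+1}}$ are immediate. One could also phrase the statement as saying the reduced cosimplicial structure of Proposition~\ref{prop:delta_equalizer} admits a unique iterated coface $A \to A^{\pbcirc n}$, but the induction above is the most direct route and I would present it that way.
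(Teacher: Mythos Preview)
Your proof is correct and is exactly the argument the paper has in mind: the corollary is stated without proof immediately after Proposition~\ref{prop:delta_equalizer}, and your induction simply unpacks the standard ``any two chains of cofaces from bottom to level $n$ agree'' consequence of the cosimplicial identities recorded there. The key identity you invoke, $\Delta^{n+1}_{j+1}\circ\Delta^n_j = \Delta^{n+1}_j\circ\Delta^n_j$, is precisely the equalizer statement in the first sentence of that proposition, so there is no daylight between your approach and the paper's.
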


\subsection{Counit and Codegeneracies}

Write $\Id$ for the functor $\Id:\Sigma_\ast \to \catC$ given by
\[
\Id(T) = \begin{cases} 1_{\!\otimes\!} & \mathrm{if\ } |T| = 1, \\
                                    \star_\catC & \mathrm{otherwise}. \end{cases}
\]
We will call $\Id$ the ``counit'' symmetric sequence.  [The dual 
definition of the ``unit'' symmetric sequence would use $\emptyset_\catC$.]

\begin{definition}\label{D:unital symm seq}
A counital symmetric sequence is $(A,\tilde\epsilon)$ where $A$ is a symmetric sequence and 
$\tilde\epsilon$ is a natural transformation to the counit
$\tilde\epsilon:A\to \Id$. 
\end{definition}

Note that being counital is equivalent to the existence of a map 
$ A(1) \to 1_{\!\otimes\!}$.
We will not require the map $A(1) \to 1_{\!\otimes\!}$ to be equipped with a section.
In the next subsection, we will use the following basic equality whose proof can be read off of Figure~\ref{fig:degeneracy}.

\begin{lemma}\label{L:epsilon_degen}
The following functors $\Sigma_\ast \to \catC$ are equal.
\[
\bigl(\Id \bcirc A\bigr) s^1_0 \ = \ A \ = \ \bigl(A \bcirc \Id\bigr)s^1_1.
\]

More generally, the following functors $\Sigma_\ast^{\wr n} \to \catC$ are equal.
\[
\Bigl(\bigl(A^{\bcirc i}\bigr) \bcirc \Id \bcirc \bigl(A^{\bcirc (n-i)}\bigr)\Bigr)\,s^n_i 
  = A^{\bcirc n}
\]
\end{lemma}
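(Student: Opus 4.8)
The plan is to unwind both sides as explicit functors on the category $\Sigma_\ast$ (respectively $\Sigma_\ast^{\wr n}$), using the chain-of-maps description of objects in $\widetilde\Sigma_\ast^{\wr m}$ from Definition~\ref{D:composition product}, and to observe that the degeneracy $s^1_0$ (resp.\ $s^n_i$) inserts an identity map $S_i \xleftarrow{\mathrm{Id}} S_i$ into the chain at exactly the slot where $\Id$ sits in the composition product. Since $\Id$ evaluates to $1_{\!\otimes}$ on singletons and to $\star_\catC$ otherwise, the tensor factor contributed by the $\Id$-slot is $\bigotimes_{s\in S_i}\Id(\mathrm{Id}^{-1}(s)) = \bigotimes_{s\in S_i}\Id(\{s\}) = \bigotimes_{s\in S_i} 1_{\!\otimes} = 1_{\!\otimes}$, which is the monoidal unit and hence disappears from the tensor product. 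What remains is precisely the formula defining $A$ (resp.\ $A^{\bcirc n}$) on the original chain.

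Concretely, for the first equality I would take an object $(\star \xleftarrow{f_0} S_1)$ of $\Sigma_\ast$, apply $s^1_0$ to get $(\star \leftarrow \star \xleftarrow{f_0} S_1)$ in $\widetilde\Sigma_\ast^{\wr 2}$ (using the chosen distinguished one-point set as the doubled $\star$), evaluate $\Id\bcirc A$ on it via Definition~\ref{D:composition product}: the $i=0$ factor is $\bigotimes_{s\in\star}\Id(f_{-1}^{-1}(s))$ where the relevant map is the identity $\star \to \star$, giving $\Id(\star) = 1_{\!\otimes}$; the $i=1$ factor is $\bigotimes_{s\in\star} A(f_0^{-1}(s)) = A(S_1)$. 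Tensoring, $1_{\!\otimes}\otimes A(S_1) = A(S_1)$. The symmetric-case $s^1_1$ is the mirror image: $s^1_1$ doubles the leaf set, inserting $S_1\xleftarrow{\mathrm{Id}}S_1$ at the bottom, so $A\bcirc\Id$ evaluates to $\Bigl(\bigotimes_{s\in\star}A(f_0^{-1}(s))\Bigr)\otimes\Bigl(\bigotimes_{s\in S_1}\Id(\{s\})\Bigr) = A(S_1)\otimes 1_{\!\otimes} = A(S_1)$. One must also check that morphisms are sent correctly, but this is immediate since $\Id$ sends every isomorphism of singletons to $\mathrm{id}_{1_{\!\otimes}}$ and the other tensor factors are carried along unchanged; and one should note the equalities are of functors, i.e.\ the unit isomorphisms $1_{\!\otimes}\otimes(-) \cong (-)$ are being used to make them literal equalities (or the claim is read up to these canonical isomorphisms, which is the standard convention here).

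For the general statement the argument is the same with more bookkeeping: by Proposition~\ref{prop:wreath_is_associative} the degeneracy $s^n_i$ on $\Sigma_\ast^{\wr n}$ is $\mathrm{Id}\wr s^1_0\wr\mathrm{Id}$ (or the appropriate variant), so it doubles level $i$, inserting an identity map there; in the evaluation of $(A^{\bcirc i})\bcirc\Id\bcirc(A^{\bcirc(n-i)})$ the block of tensor factors indexed by that level is $\bigotimes_{s}\Id(\{s\}) = 1_{\!\otimes}$ over each element $s$, hence trivial, and the remaining factors reassemble exactly into $A^{\bcirc n}$ evaluated on the original chain. I would phrase this as: apply the $n=2$ case levelwise inside the associativity isomorphism of $\bcirc$.

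The main obstacle is purely notational rather than mathematical: correctly tracking which one-point set gets inserted where under $s^n_i$ (in particular the choice of distinguished point needed to define $s^n_0$ on $\Sigma_\ast^{\wr n}$ as opposed to $\widetilde\Sigma_\ast^{\wr n}$), and being careful that the "equality" of functors is understood modulo the monoidal unit isomorphisms $1_{\!\otimes}\otimes X \cong X \cong X\otimes 1_{\!\otimes}$ — which is exactly the content the figure referenced in the statement is meant to make visually obvious. Once the chain-of-maps picture is drawn, there is essentially nothing left to prove.
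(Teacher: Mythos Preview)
Your proposal is correct and is essentially the same approach as the paper's: the paper's ``proof'' consists entirely of the sentence that the statement can be read off of Figure~\ref{fig:degeneracy}, and what you have written is precisely that reading spelled out in formulas rather than pictures. Your care about the unit isomorphisms $1_{\!\otimes}\otimes X \cong X$ is appropriate and is exactly the kind of tacit identification the paper is making.
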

\qed

In the footsteps of Lemma~\ref{L:epsilon_degen} we define the following generalization.

\begin{definition}
Given a counital symmetric sequence $(A,\, \tilde \epsilon)$ define associated 
natural transformations
$\tilde \epsilon^n_i:A^{\bcirc (n+1)} s^n_i \to A^{\bcirc n}$, for $0\le i\le n$, to be the following compositions.
\[
\xymatrixnocompile@R=5pt@C=55pt{
 A^{\bcirc (n+1)} s^n_i  \ar@{{}{=}{}}[d]   &
 A^{\bcirc n} \vphantom{s^n_i} \ar@{{}{=}{}}[d]   
\\ 
 \Bigl(\bigl(A^{\bcirc i}\bigr) \bcirc A \bcirc \bigl(A^{\bcirc (n-i)}\bigr)\Bigr)\, s^n_i 
     \ar[r]^{(\mathrm{Id} \bcirc \tilde\epsilon \bcirc \mathrm{Id})\,s^n_i } &
 \Bigl(\bigl(A^{\bcirc i}\bigr) \bcirc \Id \bcirc \bigl(A^{\bcirc (n-i)}\bigr)\Bigr)\,s^n_i 
} 
\]
Define $\tilde\epsilon^0_0 = \tilde\epsilon: A \to \Id$.
\end{definition}

These natural transformations induce codegeneracies in the following manner.
Since $\gamma^{n+1}\,s^n_i = \gamma^n$, the universal transformation
$A^{\pbcirc(n+1)}\,\gamma^{n+1} \to A^{\bcirc(n+1)}$ induces a transformation
$A^{\pbcirc(n+1)} \to \Rkan_{\gamma^n}\bigl(A^{\bcirc(n+1)}\,s^n_i\bigr)$.
Define $\epsilon^n_i:A^{\pbcirc (n+1)} \to A^{\pbcirc n}$ to be the following composition.
\begin{equation}\begin{aligned}
\xymatrixnocompile@R=5pt@C=20pt{
 &&&
 A^{\pbcirc n} \ar@{{}{=}{}}[d]   
\\
	A^{\pbcirc (n+1)} \ar[r] &
 \Rkan_{\gamma^n}\bigl(A^{\bcirc (n+1)}\, s^n_i\bigr) 
     \ar[rr]^(.55){\Rkan_{\gamma^n} (\tilde\epsilon^n_i)} &&
 \Rkan_{\gamma^n}\bigl(A^{\bcirc n}\bigr)
} 
\end{aligned}\end{equation}

Similar to Proposition~\ref{prop:delta_equalizer}, the corresponding properties of $s^n_i$
imply the following.

\begin{proposition}\label{prop:cosimplicial_identity_codegen}
Let $(A,\tilde\epsilon)$ be a counital symmetric sequence.  Then the transformation
$\epsilon^{n-1}_i:A^{\pbcirc n} \to A^{\pbcirc (n-1)}$ coequalizes the two transformations
$\epsilon^n_i,\,\epsilon^n_{i+1}:A^{\pbcirc (n+1)} \rightrightarrows A^{\pbcirc n}$.

More generally $\epsilon^{n-1}_i\,\epsilon^{n}_j = \epsilon^{n-1}_{j-1}\,\epsilon^{n}_i$ for $j > i$.
\end{proposition}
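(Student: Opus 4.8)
The plan is to deduce Proposition~\ref{prop:cosimplicial_identity_codegen} from the corresponding cosimplicial identities among the codegeneracy functors $s^n_i:\Sigma_\ast^{\wr n}\to\Sigma_\ast^{\wr n+1}$, exactly as Proposition~\ref{prop:delta_equalizer} was deduced from the relations among the $\partial^n_i$. The underlying combinatorial fact is the classical simplicial identity $s^{n-1}_i\,s^n_j = s^{n-1}_{j-1}\,s^n_i$ for $j>i$ (and its degenerate case $j=i{+}1$, giving the coequalizer statement), which holds for the doubling functors on labeled level trees by inspection of Figure~\ref{fig:degeneracy}: doubling level $i$ and then level $j$ produces the same tree as doubling level $j{-}1$ first and then level $i$, when $j>i$.

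First I would record the functor-level identity $\bigl(A^{\bcirc(n+1)} s^n_i\bigr) s^{n-1}_j = \bigl(A^{\bcirc(n+1)} s^n_i\bigr) s^{n-1}_{j-1}$ — no, more carefully: I would verify that the two composite functors $A^{\bcirc(n+1)}\, s^{n-1}_i\, s^n_j$ and $A^{\bcirc(n+1)}\, s^{n-1}_{j-1}\, s^n_i$ from $\Sigma_\ast^{\wr(n-1)}$ to $\catC$ agree, and that under this identification the two composites of $\tilde\epsilon$-transformations agree as well; this is the analogue of applying $\tilde\Delta$ ``at two positions'' and is again pure bookkeeping with the block structure of the $\bcirc$-product together with Lemma~\ref{L:epsilon_degen}. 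Then I would translate to the Kan-extension side: since $\gamma^{n}\, s^{n-1}_i = \gamma^{n-1}$ and each $\gamma$ is epi (so that, as in the paragraph before Diagram~(4), a transformation out of $B\gamma^{n}$ postcomposed with the codegeneracy functors is determined by a transformation out of $B\gamma^{n-1}$), the universal transformation $A^{\pbcirc(n+1)}\gamma^{n+1}\to A^{\bcirc(n+1)}$ together with the functor-level identity above forces the two composites $\epsilon^{n-1}_i\,\epsilon^n_j$ and $\epsilon^{n-1}_{j-1}\,\epsilon^n_i$ to coincide, by the universal property of $A^{\pbcirc(n-1)} = \Rkan_{\gamma^{n-1}}(A^{\bcirc(n-1)})$: both are the unique map induced by the common composite transformation $A^{\pbcirc(n+1)}\gamma^{n-1}\to A^{\bcirc(n-1)}$. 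The coequalizer assertion is the special case $j = i+1$, where $s^{n-1}_i$ is a common section of both $s^n_{i}$ and $s^n_{i+1}$ (``the degeneracies $s^n_{i-1}$ and $s^n_i$ are each sections of $\partial^{n+1}_i$'' dualizes appropriately), and one checks the induced map coequalizes them.

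I expect the main obstacle to be purely notational: carefully matching up the indices in the iterated $\bcirc$-products so that ``apply $\tilde\epsilon$ at block $i$, then at block $j$'' on the nested wreath category genuinely equals ``apply at block $j{-}1$, then at block $i$'' after the doubling functors are composed — in particular keeping straight which copy of $A$ (or $\Id$) sits in which slot of $A^{\bcirc i}\bcirc \Id\bcirc A^{\bcirc(n-i)}$ before and after the degeneracy, and confirming that the $s^n_i$ really do intertwine these expressions the way Lemma~\ref{L:epsilon_degen} asserts at the bottom level. Once the functor-level identity is pinned down, the passage through right Kan extensions is formal and identical to the proof of Proposition~\ref{prop:delta_equalizer}, so I would present that part briefly, referring back rather than repeating it.
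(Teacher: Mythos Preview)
Your proposal is correct and matches the paper's approach exactly: the paper gives no explicit argument at all, writing only ``Similar to Proposition~\ref{prop:delta_equalizer}, the corresponding properties of $s^n_i$ imply the following'' and then \qed, so your plan to read the result off the simplicial identities for the degeneracy functors and then pass to right Kan extensions is precisely what is intended. The index bookkeeping you flag is indeed the only content (your written identity $s^{n-1}_i\,s^n_j = s^{n-1}_{j-1}\,s^n_i$ has the superscripts slightly off---the relevant relation is $s^{n}_j\,s^{n-1}_i = s^{n}_i\,s^{n-1}_{j-1}$ for $i<j$ as functors $\Sigma_\ast^{\wr(n-1)}\to\Sigma_\ast^{\wr(n+1)}$), but you already anticipate this and it does not affect the substance of the argument.
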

\qed

\subsection{Cooperads and Cosimplicial Structure}

\begin{definition}\label{def:cooperad}
A cocomposition operation on a counital symmetric sequence respects the counit if
the following diagram of natural transformations $\Sigma_\ast \to \catC$ commutes.
\begin{equation}\label{diag:coface}\begin{aligned}
\xymatrixnocompile@R=3pt@C=10pt{
& A \gamma^2 s^1_0  \ar[rr]^(.45){\tilde\Delta s^1_0} && (A\bcirc A) s^1_0 \ar[rrr]^(.5){(\tilde\epsilon\bcirc\mathrm{Id}) s^1_0} &&& (\Id\bcirc A) s^1_0 \ar@/^/[dr]^(.6){=} & \\
A \ar[rrrrrrr]|{\mathrm{\,Id\,}}  
   \ar@/^/[ur]^(.35){=} \ar@/_/[dr]_(.35){=} & & & & & & &
  A \\
& A \gamma^2 s^1_1  \ar[rr]_(.45){\tilde\Delta s^1_1} && (A\bcirc A) s^1_1 \ar[rrr]_(.5){(\mathrm{Id}\bcirc\tilde\epsilon) s^1_1} &&& (A\bcirc \Id) s^1_1 \ar@/_/[ur]_(.6){=} & \\
}
\end{aligned}\end{equation}

A counital cooperad is a counital symmetric sequence with 
cocomposition which respects the counit.
\end{definition} 

Applying Proposition~\ref{prop:wreath_is_associative} and using the simplicial structure of
wreath product categories, the requirement in Definition~\ref{def:cooperad} implies a more
general statement.

\begin{proposition}\label{prop:cosimplicial_identity_others}
If $(\cO,\, \tilde\Delta,\, \tilde \epsilon)$ is a cooperad, then
the following composition is equal to the identity 
$\mathrm{Id}_{\cO^{\bcirc n}}$, for $j=(i-1),\, i$.
\[ 
\cO^{\bcirc n} = \cO^{\bcirc n}\, \partial^{n+1}_i s^n_j 
   \xrightarrow{\ \tilde \Delta^{n+1}_i\, s^n_j\ }
\cO^{\bcirc (n+1)}\, s^n_j
   \xrightarrow{\ \tilde \epsilon^n_j\ }
\cO^{\bcirc n}
\]
Furthermore, the following compositions are equal if $j < i-1$.
\[
\xymatrixnocompile@R=5pt@C=40pt{
\cO^{\bcirc n}\, (\partial^{n+1}_i\,s^n_j) 
   \ar[r]^(.51){\tilde\Delta^{n+1}_i\,s^n_j}   
   \ar@{{}{=}{}}[d] &
\cO^{\bcirc (n+1)}\, s^n_j 
   \ar[r]^(.6){\tilde\epsilon^n_j} &
\cO^{\bcirc n} \\
\cO^{\bcirc n}\, (s^{n-1}_{j} \partial^n_{i+1})
   \ar[r]_(.55){\tilde \epsilon^{n-1}_j\, \partial^n_{i+1}} &
\cO^{\bcirc (n-1)} \partial^n_{i+1}
   \ar[r]_(.6){\tilde \Delta^n_{i+1}} &
\cO^{\bcirc n}
}
\]
as well as the similar statement for $j > i$.
\end{proposition}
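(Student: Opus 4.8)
The plan is to reduce everything to the defining square in Diagram~(\ref{diag:coface}) together with the associativity of $\bcirc$ (Proposition on associativity of $\bcirc$) and the simplicial identities among the $\partial^n_i$, $s^n_i$. First, I would recall the key bookkeeping fact: by Proposition~\ref{prop:wreath_is_associative} every $\partial^{n+1}_i$ is $\mathrm{Id}^{\wr i-1}\wr\gamma^2\wr\mathrm{Id}^{\wr n-i}$, and by Lemma~\ref{L:epsilon_degen} every $s^n_j$ is the ``doubling at slot $j$'' degeneracy. Consequently $\tilde\Delta^{n+1}_i$ is literally $\mathrm{Id}^{\bcirc i-1}\bcirc\tilde\Delta\bcirc\mathrm{Id}^{\bcirc n-i}$ and $\tilde\epsilon^n_j$ is $\mathrm{Id}^{\bcirc j}\bcirc\tilde\epsilon\bcirc\mathrm{Id}^{\bcirc n-j}$, each being the evident ``apply the structure map in one slot, identity elsewhere'' natural transformation, once we invoke associativity of $\bcirc$ to parenthesize freely. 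So the proof is entirely a matter of tracking which tensor slot each operation touches.

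Next I would treat the two claims in turn. For the first claim ($j=i-1$ or $j=i$): the composite $\cO^{\bcirc n}=\cO^{\bcirc n}\,\partial^{n+1}_i s^n_j \xrightarrow{\tilde\Delta^{n+1}_i s^n_j}\cO^{\bcirc(n+1)}s^n_j\xrightarrow{\tilde\epsilon^n_j}\cO^{\bcirc n}$ acts only in slots $i-1,i$ (for $j=i-1$) or slots $i,i+1$ (for $j=i$) of the iterated $\bcirc$-product, and is the identity on all other slots. After using associativity to isolate those two adjacent slots, the composite restricted there is exactly the upper (resp. lower) path of Diagram~(\ref{diag:coface}) with $A$ replaced by $\cO$ — i.e. $\cO\xrightarrow{=}\cO\gamma^2 s^1_0\xrightarrow{\tilde\Delta s^1_0}(\cO\bcirc\cO)s^1_0\xrightarrow{(\tilde\epsilon\bcirc\mathrm{Id})s^1_0}(\Id\bcirc\cO)s^1_0\xrightarrow{=}\cO$ (resp. the $s^1_1$ path) — which commutes to $\mathrm{Id}$ by the definition of a cooperad together with Lemma~\ref{L:epsilon_degen}. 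Tensoring back in the untouched slots (functoriality of $\bcirc$) gives $\mathrm{Id}_{\cO^{\bcirc n}}$.

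For the second claim ($j<i-1$, and symmetrically $j>i$): now the $\tilde\epsilon$-slot $j$ and the $\tilde\Delta$-slots $\{i-1,i\}$ (in the relevant indexing) are disjoint, so the two natural transformations operate in non-overlapping, non-adjacent regions of the $\bcirc$-product and hence commute with each other by functoriality of $\bcirc$ (the Eckmann–Hilton-style interchange for $F\bcirc G$ noted after Proposition~\ref{prop:delta_equalizer}). The only subtlety is matching the source and target categories of the two stated composites: one must check, using the simplicial identities $s^{n-1}_j\partial^n_{i+1}=\partial^{n+1}_i s^n_j$ (valid for $j<i$, which is the reason the index shifts from $i$ to $i+1$ on the bottom row) and the equality of right Kan extensions $\Rkan_{\gamma^n}(\cO^{\bcirc n}\partial^{n+1}_i)=\Rkan_{\gamma^{n-1}}(\cO^{\bcirc n})$ recorded just before Definition~\ref{D:unital symm seq}, that both legs of the square are natural transformations between the same pair of functors, after which the interchange identity finishes it.

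The main obstacle is purely organizational rather than conceptual: keeping the index arithmetic straight when passing $\tilde\epsilon$ past $\tilde\Delta$, since applying a codegeneracy in slot $j$ renumbers the slots above it, so the ``same'' coface map appears with index $i$ on one side and $i+1$ on the other. I expect the cleanest route is to do the whole argument at the level of the explicit doubling/composition descriptions of $s^n_j$ and $\partial^{n+1}_i$ from Figures~\ref{fig:gamma_tree} and~\ref{fig:degeneracy} — where ``disjoint slots'' is visually obvious — and only at the end invoke the $\Rkan_{\gamma^n}$ equalities to transport the identity of $\bcirc$-level natural transformations down to the stated identity of $\pbcirc$-level maps, exactly as in the proof of Proposition~\ref{prop:delta_equalizer}.
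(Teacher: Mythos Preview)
Your approach is correct and is precisely what the paper has in mind: the paper proves this proposition by \qed\ alone, after remarking that it follows from Proposition~\ref{prop:wreath_is_associative}, the simplicial identities among $\partial^n_i$ and $s^n_j$, and the defining Diagram~(\ref{diag:coface}). Your reduction to the two-slot counit triangle for $j\in\{i-1,i\}$ and to the disjoint-slot interchange for $|j-i|>1$ is exactly that argument made explicit.

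One small point: your final paragraph about invoking the $\Rkan_{\gamma^n}$ equalities to ``transport down to the $\pbcirc$-level'' is unnecessary here---the proposition is stated and proved entirely at the $\bcirc$ level; the passage to $\pbcirc$ happens only afterward in Theorem~\ref{thm:cO_complex}. You only need the simplicial identity $\partial^{n+1}_i s^n_j = s^{n-1}_j \partial^n_{i+1}$ to match sources, not any Kan-extension bookkeeping.
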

\qed

We have now almost completed the proof of the following.

\begin{theorem}\label{thm:cO_complex}
If $(\cO,\, \tilde\Delta,\, \tilde\epsilon)$ is a cooperad, then the collection
$\{\cO^{\pbcirc n}\}_n$ along with coface maps $\Delta^n_i$ and codegeneracy maps
$\epsilon^n_i$ defines a coaugmented cosimplicial symmetric sequence with two extra
 codegeneracies.
\begin{equation}\label{E:O complex}\begin{aligned}\xymatrix@C=2pt{
 \cO \ar[rrrr]
    &&\quad&\ar@{-->}@<3pt>[ll] \ar@{-->}@<-3pt>[ll] &
 \cO\pbcirc \cO \ar@<3pt>[rrrr] \ar@<-3pt>[rrrr]
    &&\quad&\ar@{-->}[ll] \ar@{-->}@<6pt>[ll] \ar@{-->}@<-6pt>[ll] &
 \cO^{\pbcirc 3} \ar@<6pt>[rrrr] \ar[rrrr] \ar@<-6pt>[rrrr]
    &&\quad&\ar@{-->}@<3pt>[ll] \ar@{-->}@<-3pt>[ll] \ar@{-->}@<9pt>[ll] \ar@{-->}@<-9pt>[ll] &
 \cO^{\pbcirc 4} \ar@<9pt>[rrrr] \ar@<3pt>[rrrr] \ar@<-3pt>[rrrr] \ar@<-9pt>[rrrr]
    &&\quad&\ar@{-->}[ll] \ar@{-->}@<6pt>[ll] \ar@{-->}@<-6pt>[ll] 
            \ar@{-->}@<12pt>[ll] \ar@{-->}@<-12pt>[ll] & 
 \cdots  
}\end{aligned}\end{equation}
\end{theorem}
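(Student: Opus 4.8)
The plan is to assemble the cosimplicial identities from the pieces already established in the preceding propositions, and to verify that the ``two extra codegeneracies'' fit the augmented pattern. The structure to exhibit is: objects $\cO^{\pbcirc n}$ for $n \ge 1$, coface maps $\Delta^n_i : \cO^{\pbcirc(n-1)} \to \cO^{\pbcirc n}$ for $1 \le i \le n-1$, and codegeneracy maps $\epsilon^n_i : \cO^{\pbcirc(n+1)} \to \cO^{\pbcirc n}$ for $0 \le i \le n$, together with the coaugmentation $\cO \to \cO$ (the identity, in degree recorded as the $(-1)$-truncation) — matching exactly the diagram~(\ref{E:O complex}), which is the Kan-extended image of the augmented simplicial category of wreath products displayed in Section~2 (``the bottom level as well as the first and last face maps removed; equivalently, an augmented simplicial category with two extra degeneracies'').

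First I would list the required identities and cite their sources. The coface-coface relations $\Delta^{n+1}_j \Delta^n_i = \Delta^{n+1}_i \Delta^n_{j-1}$ for $j > i$ are exactly Proposition~\ref{prop:delta_equalizer}. The codegeneracy-codegeneracy relations $\epsilon^{n-1}_i \epsilon^n_j = \epsilon^{n-1}_{j-1}\epsilon^n_i$ for $j > i$ are exactly Proposition~\ref{prop:cosimplicial_identity_codegen}. The mixed relations — $\epsilon^n_j \Delta^{n+1}_i = \mathrm{Id}$ for $j = i-1, i$, and $\epsilon^n_j \Delta^{n+1}_i = \Delta^n_{\cdots}\epsilon^{n-1}_{\cdots}$ in the remaining ranges $j < i-1$ and $j > i$ — are the Kan-extension shadows of Proposition~\ref{prop:cosimplicial_identity_others}. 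So the bulk of the proof is the phrase already used in the excerpt: ``We have now almost completed the proof.'' What remains is genuinely the translation from the $\bcirc$-level natural-transformation identities to the $\pbcirc$-level maps of symmetric sequences.

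That translation is the step I expect to be the main (though modest) obstacle, and I would handle it uniformly. The coface map $\Delta^n_i$ was defined as $\Rkan_{\gamma^n}(\tilde\Delta^n_i)$ using the identification $\Rkan_{\gamma^n}(A^{\bcirc(n-1)}\partial^n_i) = \Rkan_{\gamma^{n-1}}(A^{\bcirc(n-1)}) = A^{\pbcirc(n-1)}$, valid because $\partial^n_i$ is epi and $\gamma^{n-1}\partial^n_i = \gamma^n$; similarly $\epsilon^n_i$ came from $\gamma^{n+1}s^n_i = \gamma^n$. The point is that right Kan extension along the $\gamma$'s is functorial, and composites of the structural functors $\partial^n_i, s^n_i$ on wreath categories satisfy the simplicial identities (Proposition~\ref{prop:wreath_is_associative} plus the classical identities recalled in Section~2), so each $\bcirc$-level identity of Propositions~\ref{prop:delta_equalizer}, \ref{prop:cosimplicial_identity_codegen}, \ref{prop:cosimplicial_identity_others} maps under $\Rkan$ to the corresponding $\pbcirc$-level identity, once one checks that the Kan-extension identifications used on the two sides are compatible — i.e. that the relevant squares of functors $\Sigma_\ast^{\wr k} \to \Sigma_\ast$ commute and induce the ``same'' identification of iterated $\Rkan$'s. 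This is a diagram chase with no surprises, and I would spell out one representative case (say $\epsilon^n_{i-1}\Delta^{n+1}_i = \mathrm{Id}$, coming from $\cO^{\bcirc n} = \cO^{\bcirc n}\partial^{n+1}_i s^n_{i-1}$ and the first display of Proposition~\ref{prop:cosimplicial_identity_others}) and assert the rest by the same argument.

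Finally I would address the coaugmentation and the count of extra codegeneracies. The coaugmentation $\cO \to \cO\pbcirc\cO$ is $\Delta = \Delta^2_1$, and the two extra codegeneracies in the lowest slot are $\epsilon^1_0, \epsilon^1_1 : \cO\pbcirc\cO \to \cO$, which by Proposition~\ref{prop:cosimplicial_identity_others} (case $n=1$, $j = i-1 = 0$ and $j = i = 1$, with $i=1$) satisfy $\epsilon^1_0 \Delta = \mathrm{Id} = \epsilon^1_1\Delta$ — exactly the ``extra degeneracy'' contractibility data; the coequalizing property of $\Delta$ with respect to $\Delta^3_1, \Delta^3_2$ is the $n=2$ case of Proposition~\ref{prop:delta_equalizer}. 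Thus at level $n$ we have coface maps indexed $1,\dots,n-1$ (that is, $n-1$ of them, one fewer than a full cosimplicial object, on each of the first and last ends) and codegeneracies indexed $0,\dots,n$ (that is, $n+1$ of them, two more than a full cosimplicial object), matching the arrow counts in diagram~(\ref{E:O complex}) and the description of the wreath-product diagram in Section~2. Collecting these verified identities gives precisely a coaugmented cosimplicial symmetric sequence with two extra codegeneracies, completing the proof.
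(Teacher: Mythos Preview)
Your proposal is correct and follows essentially the same approach as the paper: cite Propositions~\ref{prop:delta_equalizer} and \ref{prop:cosimplicial_identity_codegen} for the pure coface and codegeneracy identities, then obtain the mixed identities by applying $\Rkan_{\gamma^n}$ to the statements of Proposition~\ref{prop:cosimplicial_identity_others}, noting that the key technical point is the compatibility of the Kan-extension identifications (the paper records this as the factorization of $\Rkan_{\gamma^n}\bigl(\tilde\Delta^{n+1}_i\,s^n_j\bigr)$ through $\Delta^{n+1}_i$ followed by the map into $\Rkan_{\gamma^n}(\cO^{\bcirc(n+1)}s^n_j)$). Your additional explicit discussion of the coaugmentation and the arrow counts is not in the paper's proof but is a harmless elaboration of the statement.
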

\begin{proof}
In Propositions~\ref{prop:delta_equalizer} and 
\ref{prop:cosimplicial_identity_codegen}, we have already shown the cosimplicial identities
$ \Delta^{n+1}_{j}\, \Delta^n_i = \Delta^{n+1}_i\, \Delta^n_{j-1}$ 
and $\epsilon^{n-1}_i\,\epsilon^{n}_j = \epsilon^{n-1}_{j-1}\,\epsilon^{n}_i$.

It remains only to consider the compositions $\Delta^{n+1}_i\, \epsilon^n_j$.
These come from the right Kan extension over $\gamma^n$ of the statements of 
Proposition~\ref{prop:cosimplicial_identity_others}.  Note that the right Kan extension
$\Rkan_{\gamma^n}\Bigl(\cO^{\bcirc n}\, \partial^{n+1}_i s^n_j 
 \xrightarrow{\ \tilde\Delta^{n+1}_i \, s^n_j\ } \cO^{\bcirc (n+1)}\, s^n_j\Bigr)$ 
is equal to the composition
\[
 \Rkan_{\gamma^{n+1}}\bigl(\cO^{\bcirc n}\, \partial^{n+1}_i\bigr) 
  \xrightarrow{\ \Delta^{n+1}_i\ } 
 \Rkan_{\gamma^{n+1}}\bigl(\cO^{\bcirc (n+1)}\bigr)
  \longrightarrow
 \Rkan_{\gamma^n}\bigl(\cO^{\bcirc (n+1)}\, s^n_j\bigr).
\]
\end{proof}

\subsection{Parenthesization Maps and Cooperad Structure.}

From now on, let $A, B, C$ be generic symmetric sequences and
$(\cO,\,\tilde\Delta,\,\tilde\epsilon)$ be a generic counital cooperad.

\begin{proposition}\label{L:parenthesizations}
There are canonical 
``parenthesization'' natural transformations:
$$\begin{aligned}\xymatrix@C=20pt@R=1pt{
  (A\pbcirc B)\pbcirc C \ar@/^/[dr] & \\
  & A\pbcirc B\pbcirc C  \\ 
  A\pbcirc( B\pbcirc C) \ar@/_/[ur] & 
}\end{aligned}$$
More generally there are parenthesization maps 
to $A_1\pbcirc\cdots\pbcirc A_n$
from any parenthesization of this expression.
\end{proposition}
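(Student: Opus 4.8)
The plan is to produce the parenthesization maps as the adjuncts, under the universal property of right Kan extension, of certain maps of $\bcirc$-products, exploiting the associativity of $\bcirc$ (Proposition on associativity of $\bcirc$) together with the associativity of wreath product (Proposition~\ref{prop:wreath_is_associative}). Concretely, consider the case $(A\pbcirc B)\pbcirc C$. By definition this is $\Rkan_{\gamma^2}\bigl((A\pbcirc B)\bcirc C\bigr)$, a functor $\Sigma_\ast \to \catC$ built from the symmetric sequence $A\pbcirc B = \Rkan_{\gamma^2}(A\bcirc B)$ and the symmetric sequence $C$. The universal transformation $\iota$ for $A\pbcirc B$ gives $(A\pbcirc B)\gamma^2 \to A\bcirc B$, hence by functoriality of $\bcirc$ a transformation $\bigl((A\pbcirc B)\bcirc C\bigr)(\gamma^2\wr\mathrm{Id}) \to (A\bcirc B\bcirc C)$ of functors on $\Sigma_\ast\wr\Sigma_\ast\wr\Sigma_\ast$, using $(A\pbcirc B)\bcirc C$ evaluated after $\gamma^2\wr\mathrm{Id}=\partial^3_1$ and the associativity isomorphism $A\bcirc B\bcirc C \cong (A\bcirc B)\bcirc C$.

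The key observation is the factorization $\gamma^3 = \gamma^2 \circ \partial^3_1$ (stated in the text just after Diagram~(\ref{eqn:cocomp})). First I would note that, since $\partial^3_1$ is epi and $\gamma^2\partial^3_1 = \gamma^3$, right Kan extension along $\gamma^2$ of a functor on $\Sigma_\ast\wr\Sigma_\ast$ pulled back along $\partial^3_1$ agrees with right Kan extension along $\gamma^3$ directly — this is exactly the bookkeeping already used for the coface maps. So $\Rkan_{\gamma^2}\bigl((A\pbcirc B)\bcirc C\bigr) = \Rkan_{\gamma^3}\bigl((A\pbcirc B)\bcirc C)(\partial^3_1)\bigr)$, and similarly $A\pbcirc B\pbcirc C = \Rkan_{\gamma^3}(A\bcirc B\bcirc C)$. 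Applying $\Rkan_{\gamma^3}$ to the transformation of the previous paragraph and composing with the associativity isomorphism gives the desired map $(A\pbcirc B)\pbcirc C \to A\pbcirc B\pbcirc C$. The map from $A\pbcirc(B\pbcirc C)$ is entirely symmetric, using instead $\gamma^3 = \gamma^2\circ\partial^3_2$ and the transformation $\iota$ for $B\pbcirc C$ fed into $\mathrm{Id}\bcirc\iota$; here one also needs the generalized $\bcirc$ of Definition~\ref{def:wreath3} to make sense of $A\bcirc(B\pbcirc C)$ as a functor on $\Sigma_\ast\wr\Sigma_\ast^{\wr 2}\cong\Sigma_\ast^{\wr 3}$.

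For the general statement I would argue by induction on $n$ and on the structure of the parenthesization. A parenthesization of $A_1\pbcirc\cdots\pbcirc A_n$ is, up to the outermost bracket, a $\pbcirc$ of two parenthesized shorter expressions; by induction each of those receives a parenthesization map to the corresponding unbracketed $\pbcirc^{\bcirc}$-product, and functoriality of $\bcirc$ together with the appropriate factorization $\gamma^n = \gamma^2\circ\partial^n_i$ (Proposition~\ref{prop:wreath_is_associative}) and $\Rkan_{\gamma^n}$ promotes this to the claimed map; the base case is the identity. The main obstacle I anticipate is purely organizational rather than mathematical: keeping the wreath-product reindexing isomorphisms $\Sigma_\ast^{\wr i}\wr\Sigma_\ast^{\wr j}\cong\Sigma_\ast^{\wr i+j}$ coherent so that the associativity of $\bcirc$, the factorizations of the $\gamma$'s, and the pasting of Kan extensions all compose strictly as claimed — i.e.\ verifying that the universal property of $\Rkan_{\gamma^n}$ is being applied to honestly-equal functors at each stage, which is where the ``short calculation'' disclaimers in the paper are doing their work. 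I would not attempt to verify any coherence of these parenthesization maps among themselves here, since the statement only asserts their existence.
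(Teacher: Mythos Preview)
Your argument is correct and follows essentially the same route as the paper: the paper also takes the universal transformation $\iota:(A\pbcirc B)\,\gamma^2 \to A\bcirc B$, whiskers with $C$ to obtain $\bigl((A\pbcirc B)\bcirc C\bigr)\partial^3_1 \to A\bcirc B\bcirc C$, identifies $(A\pbcirc B)\pbcirc C$ with $\Rkan_{\gamma^3}\bigl(((A\pbcirc B)\bcirc C)\,\partial^3_1\bigr)$, and applies $\Rkan_{\gamma^3}$. The paper dispatches the other case and the general statement with ``the other maps are similar,'' so your inductive sketch for arbitrary parenthesizations is more explicit than what the paper provides but entirely in the same spirit.
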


\begin{proof}
We show the existence of the map
$(A\pbcirc B) \pbcirc C \to A\pbcirc B\pbcirc C$.  The other maps are similar.

The universal natural transformation $(A \pbcirc B)\,\gamma^2 \longrightarrow (A\bcirc B)$ induces
a natural transformation of functors %% $\Psi_3 \to \catC$:
 $(\Sigma_\ast \wr \Sigma_\ast) \wr \Sigma_\ast \longrightarrow \catC$:
$$ \bigl((A\pbcirc B) \bcirc C\bigr) \partial^3_1
% = (A\pbcirc B) \gamma^2 \bcirc C
 \longrightarrow (A\bcirc B)\bcirc C
= A\bcirc B\bcirc C
.$$
The desired map is induced by taking the right Kan extension $\Rkan_{\gamma^3}$ 
of the diagram above.
\[
\xymatrixnocompile@R=5pt@C=55pt{
 (A\pbcirc B)\pbcirc C  \ar@{{}{=}{}}[d]   &
 A \pbcirc B \pbcirc C \ar@{{}{=}{}}[d]   
\\ 
 \Rkan_{\gamma^3} \Bigl(\bigl( (A\pbcirc B)\bcirc C\bigr)\,\partial^3_1\Bigr)
     \ar[r] &
 \Rkan_{\gamma^3} (A\bcirc B\bcirc C)
} 
\]
\end{proof}

\begin{remark}[On the associativity of $\pbcirc$]
Without making further assumptions, it is not true that 
$(A\pbcirc B) \pbcirc C \cong A\pbcirc B\pbcirc C \cong A\pbcirc(B\pbcirc C)$.
This would follow from the existence of natural equivalences
$\bigl(\Rkan_{\gamma^2} (A\bcirc B)\bigr) \bcirc C\cong 
\Rkan_{\partial^3_1} (A\bcirc B \bcirc C)$ 
as well as the corresponding equivalence using 
$\partial^3_2$.  However, this will generally only
occur if the symmetric monoidal product $\otimes$ of $\catC$ commutes with products.

The situation contrasts starkly with that of the operad composition product, 
defined dual to $\pbcirc$ using left rather than 
right Kan extensions.  If $\catC$ is a closed monoidal category, then $\otimes$ 
is a left adjoint, so it will in particular commute with coproducts and left Kan
extensions.  In this case the parenthesization maps for the operad composition product
are  isomorphisms and the operad composition product is associative.
\end{remark}

\begin{proposition}\label{prop:parenthesization_is_associative}
Parenthesization maps are associative.
\end{proposition}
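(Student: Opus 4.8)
The plan is to show that the various parenthesization maps from Proposition~\ref{L:parenthesizations} are compatible under composition: whenever we have two parenthesizations $P$ and $P'$ of $A_1\pbcirc\cdots\pbcirc A_n$ with $P$ a "finer" refinement than $P'$ (obtainable from $P'$ by further grouping of subexpressions), the composite of parenthesization maps through intermediate parenthesizations agrees, and in particular every parenthesization map to the fully unparenthesized expression $A_1\pbcirc\cdots\pbcirc A_n$ factors canonically. The key observation is that all of these maps are built by the same mechanism: a parenthesization of $A_1\pbcirc\cdots\pbcirc A_n$ corresponds, via Proposition~\ref{prop:wreath_is_associative}, to an iterated right Kan extension along a composite of the $\gamma$-type face maps $\Sigma_\ast^{\wr k}\to\Sigma_\ast$ applied blockwise; a refinement step is the universal natural transformation $(\Rkan_{\gamma}(\cdots))\,\gamma \to (\cdots)$ tensored with identities and restricted along a face functor, and then a single further right Kan extension.

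First I would set up notation identifying each parenthesization with a rooted tree whose leaves are labelled $A_1,\dots,A_n$ in order, so that composing parenthesization maps corresponds to contracting internal edges of this tree; Proposition~\ref{prop:wreath_is_associative} tells us $\bcirc$ is strictly associative, so the target $A_1\bcirc\cdots\bcirc A_n$ (and all intermediate $\bcirc$-expressions, suitably restricted along face maps) is unambiguous. Then I would reduce the associativity of parenthesization maps to the following two local statements: (i) the parenthesization map is natural in each slot (if $F\colon A_i\to A_i'$ then the parenthesization squares commute), which is immediate from functoriality of $\bcirc$ and of right Kan extension; and (ii) a "pentagon-type" coherence for a single merge of three adjacent groups — that the two ways of passing from $((XY)Z)$-type nesting down to $XYZ$ via intermediate $((XY)Z)\to(XY\,Z)\to XYZ$ versus $((XY)Z)\to(X(YZ))$ — wait, rather the cleanest formulation: any two composites of elementary parenthesization maps with the same source and target agree.

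The main technical step, and the place I expect the real work, is step (ii): showing that the elementary refinement maps satisfy the coherence making all composites with equal endpoints equal. The mechanism is that each elementary map is $\Rkan_{\gamma^m}$ of a map of the form $(\text{universal }\iota)\,\bcirc\,\mathrm{Id}$ (or $\mathrm{Id}\,\bcirc\,(\text{universal }\iota)\,\bcirc\,\mathrm{Id}$) restricted along an appropriate $\partial^m_k$, exactly as in the proof of Proposition~\ref{L:parenthesizations}; the compatibility then follows from two facts used repeatedly: the universal property of right Kan extension (a map out of $\Rkan_\gamma F$ is determined by the corresponding map out of $F$ after restriction, using that $\gamma$-type maps are epi and $\gamma^{n-1}\partial^n_i=\gamma^n$, as already exploited after Definition~\ref{def:seq_with_cocomposition}), and the interchange law for $\bcirc$ applied to universal transformations, i.e. $(\iota_{A}\bcirc\mathrm{Id})\circ(\mathrm{Id}\bcirc\iota_{B})=(\mathrm{Id}\bcirc\iota_B)\circ(\iota_A\bcirc\mathrm{Id})$ as natural transformations of functors on the relevant wreath product category. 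Concretely I would: factor an arbitrary composite of parenthesization maps as a single right Kan extension $\Rkan_{\gamma^n}$ of a composite of $\bcirc$-whiskered universal transformations restricted along faces; observe that any two such composites with the same source tree and target tree $A_1\bcirc\cdots\bcirc A_n$ unwind (by strict associativity of $\bcirc$ and the interchange law) to the same natural transformation at the $\bcirc$-level; and conclude by the universal property that they induce the same map after $\Rkan_{\gamma^n}$. The subtlety to watch is bookkeeping the restrictions along the $\partial^n_i$ — making sure that at each stage the source functor genuinely has the form $(\text{parenthesized expression})\,(\text{composite of }\partial\text{'s})$ so that the "$\Rkan$ of a restriction along an epi $\gamma$-composite $=$ $\Rkan$ of the unrestricted thing" identity applies — but this is exactly the calculus already established in the paper, so I would expect it to go through without new input beyond careful indexing.
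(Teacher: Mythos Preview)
Your proposal is correct and uses the same mechanism as the paper: unwind each parenthesization map as $\Rkan_{\gamma^n}$ of a $\bcirc$-whiskered universal transformation $\iota$ restricted along face functors, then use strict associativity of $\bcirc$ together with the interchange identity $(\iota_1\bcirc\mathrm{Id})\circ(\mathrm{Id}\bcirc\iota_2)=\iota_1\bcirc\iota_2=(\mathrm{Id}\bcirc\iota_2)\circ(\iota_1\bcirc\mathrm{Id})$ to see that all composites agree at the $\bcirc$-level before extending.

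The only difference is organizational. The paper does not set up the full tree-indexed coherence framework you describe; instead it declares that it suffices to check two concrete diagrams---one of the shape $((A\pbcirc B)\pbcirc C)\pbcirc D$ with the two routes through $(A\pbcirc B\pbcirc C)\pbcirc D$ and $(A\pbcirc B)\pbcirc C\pbcirc D$, and one of the shape $(A\pbcirc B)\pbcirc(C\pbcirc D)$ with the two routes through $(A\pbcirc B)\pbcirc C\pbcirc D$ and $A\pbcirc B\pbcirc(C\pbcirc D)$---and verifies each by exactly the computation you outline (the first by applying $-\bcirc D$ to the universal triangle for $\Rkan_{\gamma^3}$, the second by observing both composites equal $\iota_1\bcirc\iota_2$). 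Your general argument buys you the full statement in one pass without an implicit Mac~Lane-style reduction to generating diagrams; the paper's version is shorter to write down but leaves the ``these two suffice'' step to the reader.
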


For example the following diagrams commute.
\begin{equation}\label{diag:associative1}\begin{aligned}
\xymatrix@C=10pt@R=1pt{
& (A\pbcirc B\pbcirc C)\pbcirc D \ar@/^1pc/[dr] & \\
\bigl((A\pbcirc B)\pbcirc C\bigr)\pbcirc D  \ar@/^.7pc/[ur] \ar@/_.7pc/[dr] && 
  A\pbcirc B\pbcirc C\pbcirc D \\
& (A\pbcirc B)\pbcirc C\pbcirc D \ar@/_1pc/[ur] & 
}
\end{aligned}\end{equation}
\begin{equation}\label{diag:associative2}\begin{aligned}
\xymatrix@C=10pt@R=1pt{
& (A\pbcirc B)\pbcirc C\pbcirc D \ar@/^1pc/[dr] & \\
(A\pbcirc B)\pbcirc (C\pbcirc D)  \ar@/^.7pc/[ur] \ar@/_.7pc/[dr] && 
  A\pbcirc B\pbcirc C\pbcirc D \\
& A\pbcirc B\pbcirc (C\pbcirc D) \ar@/_1pc/[ur] & 
}
\end{aligned}\end{equation}

\begin{proof}[Proof of \ref{prop:parenthesization_is_associative}]
It is enough to consider Diagrams~(\ref{diag:associative1}) and (\ref{diag:associative2}).  
Commutativity is shown by writing the diagrams as right Kan extensions.  The diagrams
above are $\Rkan_{\gamma^4}$ of the following diagrams of functors 
$\Sigma^{\wr 4}_\ast \to \catC$.
\begin{equation*}\begin{aligned}
\xymatrix@C=0pt@R=1pt{
& \bigl((A\pbcirc B\pbcirc C)\bcirc D\bigr)\,(\gamma^3\wr\mathrm{Id})  
      \ar@/^1.5pc/[dr] & \\
\Bigl(\bigl((A\pbcirc B)\pbcirc C\bigr)\bcirc D\Bigr)\,(\gamma^3\wr\mathrm{Id}) \hskip -20pt
       \ar@/^1pc/[ur] \ar@/_1pc/[dr]  &&  \hskip -20pt
  A\bcirc B\bcirc C\bcirc D \\
& \bigl((A\pbcirc B)\bcirc C\bcirc D\bigr)\,\partial^4_1 \ar@/_1pc/[ur] & 
}
\end{aligned}\tag{\ref{diag:associative1}'}\end{equation*}
\begin{equation*}\begin{aligned}
\xymatrix@C=0pt@R=1pt{
& \bigl((A\pbcirc B)\bcirc C\bcirc D\bigr)\,\partial^4_1  
      \ar@/^1pc/[dr] & \\
\bigl((A\pbcirc B)\bcirc (C\pbcirc D)\bigr)\,(\gamma^2\wr\mathrm{Id}\wr\gamma^2) 
       \hskip -20pt
       \ar@/^1pc/[ur] \ar@/_1pc/[dr]  &&  \hskip -10pt
  A\bcirc B\bcirc C\bcirc D \\
& \bigl(A\bcirc B\bcirc (C\pbcirc D)\bigr)\,\partial^4_3 \ar@/_1pc/[ur] & 
}
\end{aligned}\tag{\ref{diag:associative2}'}\end{equation*}
Diagram~(\ref{diag:associative1}') is just $-\bcirc D$ applied to the following universal diagram
(in which the upper-left map is $\Rkan_{\gamma^3}$ of the lower-right).
\begin{equation*}\begin{aligned}
\xymatrix@C=10pt@R=1pt{
& (A\pbcirc B\pbcirc C)\,\gamma^3  
      \ar@/^1pc/[dr] & \\
\bigl((A\pbcirc B)\pbcirc C\bigr)\,\gamma^3 
       \ar@/^.7pc/[ur] \ar@/_.7pc/[dr]  &&  
  A\bcirc B\bcirc C \\
& \bigl((A\pbcirc B)\bcirc C\bigr)\,\partial^3_1 \ar@/_1pc/[ur] & 
}
\end{aligned}\tag{\ref{diag:associative1}''}\end{equation*}
Diagram~(\ref{diag:associative2}') commutes because the upper and lower composition
are both equal to
\[\bigl((A\pbcirc B)\bcirc (C\pbcirc D)\bigr)\,(\gamma^2\wr\mathrm{Id}\wr\gamma^2)
  \xrightarrow{\ \ \iota_1 \bcirc \iota_2\ \ }
 (A\bcirc B)\bcirc (C\bcirc D)
\]
Where $\iota_1:(A\pbcirc B)\,\gamma^2 \to A\bcirc B$ and 
$\iota_2:(C\pbcirc D)\,\gamma^2 \to C\bcirc D$ are the universal natural transformations
from their respective Kan extensions.
\end{proof}

We relate parenthesization maps with cooperad structure.
By the functoriality of $\bcirc$, there are natural transformations 
$\mathrm{Id} \bcirc \Delta: A \bcirc \cO \longrightarrow A \bcirc (\cO \pbcirc \cO)$ and
$\Delta \bcirc \mathrm{Id}: \cO \bcirc A \longrightarrow (\cO \pbcirc \cO) \bcirc A$, 
where $A$ is any
symmetric sequence.  Define the maps
$\mathrm{Id} \pbcirc \Delta$ and $\Delta \pbcirc \mathrm{Id}$ to be the natural
transformations
induced on right Kan extensions via functoriality of Kan extension.  For example
\[\Delta\pbcirc\mathrm{Id}  = \Rkan_{\gamma^2} \bigl(\Delta\bcirc \mathrm{Id} \bigr):
\cO\pbcirc A   \longrightarrow  (\cO \pbcirc \cO)\pbcirc A .\] 
By alternately letting $A$ be a parenthesization of $\cO^{\pbcirc k}$ 
and using functoriality of $\pbcirc$
this defines maps from any parenthesization of $\cO^{\pbcirc n}$.  For example
\[\bigl((\mathrm{Id}\pbcirc\mathrm{Id})\pbcirc \Delta\bigr)\pbcirc\mathrm{Id}:
\bigl((\cO\pbcirc\cO)\pbcirc\cO\bigr)\pbcirc\cO \longrightarrow
\bigl((\cO\pbcirc\cO)\pbcirc(\cO\pbcirc\cO)\bigr)\pbcirc\cO.\]

\begin{theorem}
The following diagrams commute  (unlabeled maps are parenthesization).
\[
\begin{aligned}\xymatrix@C=0pt@R=10pt{
 &\ \ & (\cO\pbcirc\cO)\pbcirc\cO \ar@/^1pc/[dr] & \\
 \cO\pbcirc\cO \ar@/^1pc/[urr]^(.32){\Delta\pbcirc\mathrm{Id}} \ar[rrr]_{\Delta^3_1} &&&
\cO\pbcirc\cO\pbcirc\cO
}\end{aligned} \qquad
\begin{aligned}\xymatrix@C=0pt@R=10pt{
 &\ \ & \cO\pbcirc(\cO\pbcirc\cO) \ar@/^1pc/[dr] & \\
\cO\pbcirc\cO \ar@/^1pc/[urr]^(.32){\mathrm{Id}\pbcirc\Delta} \ar[rrr]_{\Delta^3_2} &&&
\cO\pbcirc\cO\pbcirc\cO
}\end{aligned}
\]
More generally, parenthesization maps convert $\mathrm{Id}\pbcirc\Delta\pbcirc\mathrm{Id}$
(and its parenthesizations) to $\Delta^4_2$, etc.
\end{theorem}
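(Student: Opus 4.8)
The plan is to observe that every arrow occurring in these diagrams is a right Kan extension along $\gamma^3$, and then to verify the asserted commutativity one level \emph{down} — i.e.\ for the underlying transformations of functors $\Sigma_\ast^{\wr 3}\to\catC$. To put $\Delta\pbcirc\mathrm{Id}$ and $\mathrm{Id}\pbcirc\Delta$, which are \emph{a priori} $\Rkan_{\gamma^2}$'s, on the same footing as the parenthesization maps (which are $\Rkan_{\gamma^3}$'s), I use the standing identification $\Rkan_{\gamma^n}\bigl(F\,\partial^n_i\bigr) = \Rkan_{\gamma^{n-1}}(F)$, valid because $\gamma^{n-1}\partial^n_i = \gamma^n$ and $\partial^n_i$ is epi. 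I will prove only the first triangle; the second is word-for-word the same with $\partial^3_2 = \mathrm{Id}\wr\gamma^2$ in place of $\partial^3_1 = \gamma^2\wr\mathrm{Id}$ and $\tilde\Delta^3_2 = \mathrm{Id}\bcirc\tilde\Delta$ in place of $\tilde\Delta^3_1 = \tilde\Delta\bcirc\mathrm{Id}$, and the ``more generally'' clause reduces to these two cases by whiskering with $A\bcirc(-)$ and $(-)\bcirc B$ (functoriality of $\bcirc$ and of $\pbcirc$) together with Proposition~\ref{prop:parenthesization_is_associative} to move parenthesizations around.

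Concretely, both composites $\cO\pbcirc\cO\to\cO^{\pbcirc 3}$ are morphisms into $\cO^{\pbcirc 3} = \Rkan_{\gamma^3}(\cO^{\bcirc 3})$, so by the universal property they agree iff they induce the same natural transformation $(\cO\pbcirc\cO)\,\gamma^3\to\cO^{\bcirc 3}$ after composing with $\gamma^3$ and the universal transformation $\iota_3$. On the source side I would use $\cO\pbcirc\cO = \Rkan_{\gamma^3}\bigl(\cO^{\bcirc 2}\,\partial^3_1\bigr)$, whose universal transformation is $\iota\,\partial^3_1$ with $\iota\colon(\cO\pbcirc\cO)\gamma^2\to\cO\bcirc\cO$ as in Definition~\ref{D:composition product}. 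Then: (a) since $\Delta^3_1 = \Rkan_{\gamma^3}(\tilde\Delta^3_1)$, naturality of the counit identifies the downstairs realization of $\Delta^3_1$ with $\tilde\Delta^3_1\circ(\iota\,\partial^3_1)$; (b) writing $P$ for the parenthesization map and $Q=\Delta\pbcirc\mathrm{Id}$, the proof of Proposition~\ref{L:parenthesizations} gives $P = \Rkan_{\gamma^3}(\tau)$, where $\tau$ is the $\iota$-induced transformation $\bigl((\cO\pbcirc\cO)\gamma^2\bigr)\bcirc\cO = \bigl((\cO\pbcirc\cO)\bcirc\cO\bigr)\partial^3_1\to\cO^{\bcirc 3}$, namely $\tau = \iota\bcirc\mathrm{Id}_\cO$, while $Q = \Rkan_{\gamma^2}(\Delta\bcirc\mathrm{Id}) = \Rkan_{\gamma^3}\bigl((\Delta\bcirc\mathrm{Id})\partial^3_1\bigr)$ with $(\Delta\bcirc\mathrm{Id})\partial^3_1 = (\Delta\gamma^2)\bcirc\mathrm{Id}_\cO$. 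Functoriality of $\Rkan$ and naturality of the counit then identify the downstairs realization of $P\circ Q$ with $\tau\circ\bigl((\Delta\gamma^2)\bcirc\mathrm{Id}\bigr)\circ(\iota\,\partial^3_1)$.

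It therefore suffices to show that $\tilde\Delta^3_1$ and $\tau\circ\bigl((\Delta\gamma^2)\bcirc\mathrm{Id}\bigr)$ coincide as transformations $\cO^{\bcirc 2}\partial^3_1\to\cO^{\bcirc 3}$, since the two downstairs realizations then differ only by the common right factor $\iota\,\partial^3_1$. By the interchange law for $\bcirc$ the latter equals $\bigl(\iota\circ(\Delta\gamma^2)\bigr)\bcirc\mathrm{Id}_\cO$, and $\iota\circ(\Delta\gamma^2) = \tilde\Delta$ is precisely the relation defining $\Delta$ from $\tilde\Delta$ in Definition~\ref{def:seq_with_cocomposition} (it is the counit of the $\Rkan_{\gamma^2}$-adjunction applied to the transpose $\Delta$ of $\tilde\Delta$); hence $\tau\circ\bigl((\Delta\gamma^2)\bcirc\mathrm{Id}\bigr) = \tilde\Delta\bcirc\mathrm{Id}_\cO = \tilde\Delta^3_1$, and the triangle commutes. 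I expect the only real obstacle to be bookkeeping: one must check that re-expressing $\Rkan_{\gamma^2}$ as $\Rkan_{\gamma^3}$ of a whiskered transformation, rewriting $\cO^{\bcirc 2}\partial^3_1$ as $(\cO\gamma^2)\bcirc\cO$, and sliding whiskering past $\bcirc$ are all mutually compatible — each a routine consequence of Proposition~\ref{prop:wreath_is_associative} and the functoriality of $\bcirc$ used throughout the paper — but there is no new idea beyond the observation that the parenthesization map is assembled from $\iota$ exactly so that precomposing it with $\Delta\pbcirc\mathrm{Id}$ reproduces $\tilde\Delta = \iota\circ(\Delta\gamma^2)$ one level down.
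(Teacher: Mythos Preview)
Your proof is correct and follows essentially the same approach as the paper: both arguments reduce the triangle to a statement about functors $\Sigma_\ast^{\wr 3}\to\catC$ via the universal property of $\Rkan_{\gamma^3}$, and both hinge on the interchange identity $(\iota\bcirc\mathrm{Id})\circ\bigl((\Delta\gamma^2)\bcirc\mathrm{Id}\bigr) = (\iota\circ\Delta\gamma^2)\bcirc\mathrm{Id} = \tilde\Delta\bcirc\mathrm{Id} = \tilde\Delta^3_1$. The paper packages this same computation as a single commutative diagram with three labeled regions (your steps (a), (b), and the final interchange correspond exactly to the paper's parallelograms~\textcircled{3}, \textcircled{1}, and triangle~\textcircled{2}), whereas you phrase it in adjunction/transpose language, but there is no substantive difference.
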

\begin{proof}
We show the first diagram commutes.  The second diagram and more general
statement are proven the same.

Consider the diagram below, where maps marked $\iota$ are all universal 
transformations of right Kan extensions $(\Rkan_F X)\,F \xrightarrow{\ \iota\ } X$.
\begin{equation}\label{diag:big}\begin{aligned}\xymatrixnocompile@C=2pt@R=10pt{
 & \bigl((\cO\pbcirc \cO)\pbcirc \cO\bigr)\,\gamma^3 
   \ar[rr]^{\iota\,\partial^3_1}
   \ar@{{}{ }{}}[dr]|{\text{\textcircled{1}}}  
 & \ar@{{}{ }{}}[ddrr]^{\text{\textcircled{2}}} 
 & \bigl((\cO\pbcirc \cO)\bcirc \cO\bigr)\, \partial^3_1 \hskip -30pt 
   \ar[ddr]^{\iota\bcirc \mathrm{Id}} & 
\\   %%  row 2  %%
 \bigl(\cO \pbcirc \cO\bigr)\,\gamma^3 
    \ar[ur]^(.35){(\Delta\pbcirc\mathrm{Id})\,\gamma^3} 
    \ar[rr]^(.4){\iota\,\partial^3_1}    
    \ar[drr]_{\Delta^3_1\,\gamma^3}
 & & (\cO\bcirc\cO)\,\partial^3_1 
    \ar[drr]^{\tilde\Delta\bcirc\mathrm{Id}} 
    \ar[ur] ^(.35){(\Delta\bcirc\mathrm{Id})\,\partial^3_1}
    \ar@{{}{ }{}}[d]|{\text{\textcircled{3}}}
 & & 
\\  %%   row 3  %%
 & & (\cO\pbcirc\cO\pbcirc\cO)\, \gamma^3 \ar[rr]_{\iota}
 & &\cO \bcirc \cO \bcirc \cO  
}\end{aligned}\end{equation}
Parallelograms \textcircled{1} and \textcircled{3} commute by functoriality of right Kan
extension.  The left side of parallelogram \textcircled{1} is 
$\Rkan_{\gamma^2}$ of the right side,
and the left side of parallelogram \textcircled{3} is $\Rkan_{\gamma^3}$ of the right side.
Triangle \textcircled{2} commutes by functoriality of $\bcirc$ 
(recall that $\iota\Delta = \tilde\Delta$). 

Applying $\Rkan_{\gamma^3}$ along the outside of Diagram~(\ref{diag:big}) yields the
 following (where the map labeled $\ast$ is the parenthesization map).
\begin{equation}\begin{aligned}\xymatrixnocompile@C=2pt@R=10pt{
 & (\cO\pbcirc \cO)\pbcirc \cO
   \ar[rr]^{=}  
 &  & (\cO\pbcirc \cO)\pbcirc \cO \hskip -10pt 
   \ar[ddr]^{\ast} & 
\\   %%  row 2  %%
 \cO \pbcirc \cO
    \ar[ur]^(.35){\Delta\pbcirc\mathrm{Id}}   
    \ar[drr]_{\Delta^3_1}
 & & 
 & & 
\\  %%   row 3  %%
 & & \cO\pbcirc\cO\pbcirc\cO \ar[rr]_{=}
 & &\cO \pbcirc \cO \pbcirc \cO  
}\end{aligned}\end{equation}
\end{proof}

\begin{example}
The following diagram is commutative (the unlabeled maps are parenthesizations).
 $$\xymatrixnocompile@R=14pt{
  (\cO \pbcirc \cO) \pbcirc \cO 
      \ar[rr]^(.45){\left(\Delta\pbcirc\mathrm{Id}\right)\pbcirc\mathrm{Id}} 
      \ar[d] & \qquad &
    \bigl((\cO \pbcirc \cO)\pbcirc \cO\bigr)\pbcirc \cO \ar[d] \ar[dr] & \\
  \cO\pbcirc\cO\pbcirc\cO \ar[rr]^(.45){\Delta\pbcirc\mathrm{Id}\pbcirc\mathrm{Id}} 
      \ar@/_15pt/[rrr]_{\Delta^4_1} & &
    (\cO \pbcirc \cO) \pbcirc \cO \pbcirc \cO \ar[r] &
  \cO\pbcirc\cO\pbcirc\cO\pbcirc\cO
}$$
\end{example}

\section{Comodules and Coalgebras}

Throughout this section, 
let $(\cO, \tilde \Delta_\cO, \tilde\epsilon)$ be a counital cooperad and 
$M$ be a symmetric sequence.

\subsection{Comodules}

\begin{definition}
 A left $\cO$-comodule is $(M,\,\tilde\Delta_M)$ where $M$ is a symmetric sequence and
$\tilde \Delta_M:M\,\gamma_2 \to \cO\bcirc M$ is compatible with 
$\partial^3_1$ and $\partial^3_2$ and $s^1_0$.
\end{definition}

That is, 
 the following diagrams (analogous to Diagrams~(\ref{eqn:cocomp}) and (\ref{diag:coface}))
 should commute.
\begin{equation} \label{eqn:comod-coface}\begin{aligned}
\xymatrixnocompile@R=2pt{
& (\cO\bcirc M)\, (\gamma^2\wr \text{Id}) 
   \ar@/^/[dr]^(.6){\tilde\Delta_\cO\wr\text{Id}} & \\
M \,\gamma^3 
   \ar@/^/[ur]^(.35){\tilde\Delta_M} \ar@/_/[dr]_(.35){\tilde\Delta_M} & &
  \hskip -10pt \cO\bcirc \cO\bcirc  M\\
& (\cO\bcirc M)\, (\text{Id}\wr \gamma^2)
   \ar@/_/[ur]_(.6){\text{Id}\wr\tilde\Delta_{M}} & 
}\end{aligned}\end{equation}
\begin{equation} \label{eqn:comod-codegen}\begin{aligned}
\xymatrixnocompile@R=3pt@C=10pt{
& M\, \gamma^2 s^1_0  
    \ar[rr]^(.45){\tilde\Delta_M\, s^1_0} 
&& (\cO\bcirc M)\, s^1_0 
    \ar[rrr]^(.5){(\tilde\epsilon\bcirc \mathrm{Id})\, s^1_0} 
&&& (\Id\bcirc M)\, s^1_0 
    \ar@/^/[dr]^(.7){=} & \\
M \ar[rrrrrrr]|{\mathrm{\,Id\,}}  
    \ar@/^/[ur]^(.30){=} 
 & & & & & & & M \\
}
\end{aligned}\end{equation}

As with cooperads, we write $\Delta_M$ 
for the induced universal transformation to the right Kan extension 
$\Delta_M: M\to \cO\pbcirc M$. 
There are induced transformations 
$\tilde\Delta^{n+1}_{i}:\bigl(\cO^{\bcirc (n-1)}\bcirc M\bigr)\,\partial^{n+1}_i\to \cO^{\bcirc n}\bcirc M$ and
$\Delta^{n+1}_{i}:\cO^{\pbcirc (n-1)}\pbcirc M\to \cO^{\pbcirc n}\pbcirc M$.

\begin{theorem}\label{L:module complex}
Analogous to Theorem~\ref{thm:cO_complex} there is a canonical 
coaugmented cosimplicial complex as below.
$$\xymatrix@C=2pt{
 M \ar[rrrr]
    &&\quad&\ar@{-->}@<3pt>[ll]  &
 \cO\pbcirc M \ar@<3pt>[rrrr] \ar@<-3pt>[rrrr]
    &&\quad&\ar@{-->}[ll] \ar@{-->}@<6pt>[ll] &
 \cO^{\pbcirc 2}\pbcirc M \ar@<6pt>[rrrr] \ar[rrrr] \ar@<-6pt>[rrrr]
    &&\quad&\ar@{-->}@<3pt>[ll] \ar@{-->}@<-3pt>[ll] \ar@{-->}@<9pt>[ll] &
 \cO^{\pbcirc 3}\pbcirc M \ar@<9pt>[rrrr] \ar@<3pt>[rrrr] \ar@<-3pt>[rrrr] \ar@<-9pt>[rrrr]
    &&\quad&\ar@{-->}[ll] \ar@{-->}@<6pt>[ll] \ar@{-->}@<-6pt>[ll] 
            \ar@{-->}@<12pt>[ll] & 
 \cdots  
}$$
\end{theorem}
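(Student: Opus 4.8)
The plan is to rerun the proof of Theorem~\ref{thm:cO_complex}, now carrying the comodule $M$ along as a fixed right-hand ``coefficient''. As there, the strategy is to do everything first at the level of $\bcirc$, i.e.\ for functors out of the iterated wreath categories $\Sigma_\ast^{\wr k}$, and only afterwards pass to the stated complex by applying $\Rkan_{\gamma^k}$, using repeatedly the equality $\Rkan_{\gamma^{k}}\bigl(X\,\partial^{k}_i\bigr)=\Rkan_{\gamma^{k-1}}(X)$ that follows from $\gamma^{k-1}\partial^k_i=\gamma^k$ together with $\partial^k_i$ being epi. Throughout, $\cO^{\pbcirc(k-1)}\pbcirc M$ is read as $\Rkan_{\gamma^k}\bigl(\cO^{\bcirc(k-1)}\bcirc M\bigr)$.

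First I would record the $\bcirc$-level maps. The coface transformations $\tilde\Delta^{k+1}_i:\bigl(\cO^{\bcirc(k-1)}\bcirc M\bigr)\,\partial^{k+1}_i\to\cO^{\bcirc k}\bcirc M$, $1\le i\le k$, are the ones already introduced in the comodule subsection: splitting $\partial^{k+1}_i=\mathrm{Id}\wr\gamma^2\wr\mathrm{Id}$ at position $i$ via Proposition~\ref{prop:wreath_is_associative}, for $1\le i\le k-1$ this is $\tilde\Delta^k_i\bcirc\mathrm{Id}_M$ (the cooperadic transformation of the earlier sections with $M$ appended), while for $i=k$ it is $\mathrm{Id}\bcirc\tilde\Delta_M$, the comodule cocomposition inserted at the last slot. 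The codegeneracy transformations $\tilde\epsilon^k_i$ on the corresponding wreath categories are built as in the subsection on counit and codegeneracies, by applying $\tilde\epsilon:\cO\to\Id$ at an $\cO$-position and invoking Lemma~\ref{L:epsilon_degen}. Applying $\Rkan_{\gamma^k}$ then produces the coface maps $\Delta^{k+1}_i:\cO^{\pbcirc(k-1)}\pbcirc M\to\cO^{\pbcirc k}\pbcirc M$ (already named in the text) and the codegeneracy maps $\epsilon^k_i:\cO^{\pbcirc(k+1)}\pbcirc M\to\cO^{\pbcirc k}\pbcirc M$, with coaugmentation $\Delta_M$.

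Next I would verify the cosimplicial identities, in each case by exhibiting the required equality at the $\bcirc$-level and then applying $\Rkan_{\gamma^k}$ and rewriting the Kan extension of a composite as a composite of Kan extensions, exactly as in the last paragraph of the proof of Theorem~\ref{thm:cO_complex}. The identities $\Delta^{k+1}_j\Delta^k_i=\Delta^{k+1}_i\Delta^k_{j-1}$ for $j>i$, the identities among the $\epsilon$'s, and the mixed identities $\Delta^{k+1}_i\epsilon^k_j$ fall into two kinds. When all indices lie among the $\cO$-positions the $\bcirc$-level diagram is simply ``$-\bcirc M$'' applied to the corresponding diagram already established for the cooperad $\cO$ in Propositions~\ref{prop:delta_equalizer}, \ref{prop:cosimplicial_identity_codegen}, and \ref{prop:cosimplicial_identity_others}. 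When the outermost index is the slot meeting $M$, the diagram unwinds, via wreath associativity, to a comodule axiom: the coassociativity Diagram~(\ref{eqn:comod-coface}) and its iterates handle the coface identities, and the $s^1_0$-compatibility Diagram~(\ref{eqn:comod-codegen}) handles the mixed identity at the $M$-end.

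The one place genuinely new work is needed is the bookkeeping at the interface between the last $\cO$-factor and $M$: one must check that Proposition~\ref{prop:wreath_is_associative} really lets $\tilde\Delta_M$ be inserted at the final slot of $\cO^{\bcirc(k-1)}\bcirc M$ compatibly with inserting $\tilde\Delta_\cO$ at the neighbouring slot, and that the two comodule diagrams supply precisely the boundary cases of the cosimplicial identities with indices lining up correctly. Away from that interface the argument is a faithful transcription of the proof of Theorem~\ref{thm:cO_complex} with ``$\,\bcirc M$'' (resp.\ ``$\,\pbcirc M$'') appended throughout, so I expect no further difficulty.
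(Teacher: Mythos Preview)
Your proposal is correct and is precisely the argument the paper intends: the paper gives no proof beyond the \qed, relying on the reader to rerun Theorem~\ref{thm:cO_complex} with $M$ as a fixed right-hand coefficient, invoking the comodule axioms~(\ref{eqn:comod-coface}) and~(\ref{eqn:comod-codegen}) at the boundary slot exactly as you describe. Your sketch in fact supplies more detail than the paper does, including the correct observation that only the $\cO$-positions carry codegeneracies (hence one fewer extra codegeneracy than in the cooperad complex).
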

\qed

\begin{corollary}\label{C:mu^M_n exists}
 There are unique 
transformations
$\Delta^{[n]}_{M}:M\to \cO^{\pbcirc (n-1)}\pbcirc M$. These are equal to any combination
of parenthesization maps and cocomposition maps from their source to their target.
\end{corollary}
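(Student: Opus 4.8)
The plan is to reproduce, in the comodule setting, the argument behind the corollary following Proposition~\ref{prop:delta_equalizer} (the cooperad case $\Delta^{[n]}:A\to A^{\pbcirc n}$), using the cosimplicial complex of Theorem~\ref{L:module complex} in place of that of Theorem~\ref{thm:cO_complex}, and then strengthening the uniqueness clause so that it also absorbs parenthesization maps. So the proof breaks into two parts: a ``rigidity of the cosimplicial complex'' part, and a ``reduction to the cosimplicial complex'' part.

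For existence, I would \emph{define} $\Delta^{[n]}_M$ to be any composite of coface maps $\Delta^k_i$ ($2\le k\le n$) along the coaugmented cosimplicial complex
\[
M \longrightarrow \cO\pbcirc M \rightrightarrows \cO^{\pbcirc 2}\pbcirc M \rightrightarrows \cdots
\]
of Theorem~\ref{L:module complex}, running from the coaugmenting object $M$ up to level $\cO^{\pbcirc (n-1)}\pbcirc M$. Independence of the chosen composite is the standard fact that in a coaugmented cosimplicial object there is a \emph{unique} composite of cofaces out of the coaugmenting object into each level: the coaugmentation relation $\Delta^3_1\Delta_M=\Delta^3_2\Delta_M$ (the $k=2$ instance of the equalizer statement packaged into Theorem~\ref{L:module complex}, analogous to Proposition~\ref{prop:delta_equalizer}) together with the cosimplicial identities $\Delta^{k+1}_j\Delta^k_i=\Delta^{k+1}_i\Delta^k_{j-1}$ for $j>i$ propagate, by a short induction on $k$, to the assertion that $\Delta^k_i\circ(\text{a fixed composite into level }k-1)$ is independent of $i$. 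This settles both existence and the portion of uniqueness concerning composites of cocomposition maps alone.

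To obtain the full statement — that \emph{every} combination of parenthesization and cocomposition maps from $M$ to $\cO^{\pbcirc(n-1)}\pbcirc M$ equals $\Delta^{[n]}_M$ — I would pass through the defining right Kan extension adjunctions. Each parenthesized variant of $\cO^{\pbcirc(k-1)}\pbcirc M$ is $\Rkan$ along some $\gamma^k:\Sigma_\ast^{\wr k}\to\Sigma_\ast$ of the corresponding strictly $\bcirc$-expression built from $\cO^{\bcirc(k-1)}\bcirc M$, and since $\gamma^k=\gamma^{k-1}\partial^k_i$ with $\partial^k_i$ epi, a map out of $M$ into any of these objects corresponds naturally to a map of functors $M\gamma^k\to(\text{the }\bcirc\text{-expression})$. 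Under this correspondence every parenthesization map becomes an identity (it is, by construction, the Kan extension of the universal transformation $\iota$, which is identity-like at the $\bcirc$-level), and every cocomposition map $\Delta$, $\Delta\pbcirc\mathrm{Id}$, $\mathrm{Id}\pbcirc\Delta_M$, etc., becomes the corresponding $\bcirc$-level map $\tilde\Delta$, $\tilde\Delta\bcirc\mathrm{Id}$, $\mathrm{Id}\bcirc\tilde\Delta_M$, etc.: this is exactly the content of the theorem relating parenthesization maps with cooperad structure, together with its comodule analogue (proved identically) and the associativity of parenthesization maps (Proposition~\ref{prop:parenthesization_is_associative}). Hence an arbitrary combination transports to a composite of $\tilde\Delta$-type maps $M\gamma^n\to\cO^{\bcirc(n-1)}\bcirc M$ formed entirely inside the \emph{strictly} associative $\bcirc$-calculus, where the compatibility conditions in the definitions of cooperad and comodule (Diagrams~(\ref{eqn:cocomp}), (\ref{eqn:comod-coface}), (\ref{eqn:comod-codegen})) force all such composites to coincide; transporting back along the adjunctions gives the claim.

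The main obstacle is the bookkeeping in the last step: tracking how an arbitrary nested combination of parenthesization and cocomposition maps unwinds \emph{simultaneously} through an entire tower of Kan extension adjunctions. I expect the cleanest route is to isolate a single normalization lemma — that every combination of parenthesization and cocomposition maps landing in $\cO^{\pbcirc(n-1)}\pbcirc M$ equals a composite of coface maps $\Delta^k_i$ (there is nothing to normalize at the source, since $M$ carries no $\pbcirc$-structure) — after which the rigidity statement of the second paragraph closes the argument. The rest is a routine transcription of the cooperad-side results of the previous section to comodules.
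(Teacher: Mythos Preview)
Your proposal is correct and follows essentially the same approach the paper intends: the paper records no proof at all (just \qed), treating the result as an immediate consequence of the coaugmented cosimplicial complex of Theorem~\ref{L:module complex} together with the parenthesization compatibility results of \S3.6, which is exactly what you unpack. Your second part, reducing arbitrary parenthesization/cocomposition composites to chains of coface maps via the Kan extension adjunctions, simply makes explicit the bookkeeping the paper leaves to the reader.
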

\qed

\subsection{Coalgebras}

Let $a$ be an object of $\catC$ and $A$ be a symmetric sequence. Note that 
$a$ can be viewed as a functor $a:\Sigma_0 \to \catC$.  Recall the descriptions of
the category 
$\hat \emptyset_n$ in Remarks~\ref{rem:hat_empty2} and \ref{rem:hat_empty1}.
We may view $\hat \emptyset_n$ either as the category of level $n$ trees with no 
leaves; or as $\overline{\Sigma}^{\wr (n-1)}_\ast \subset \Sigma^{\wr (n+1)}_\ast$, the
full subcategory consisting
of chains of set maps of the following form.
\[
\star \xleftarrow{f_0} 
  S_1 \xleftarrow{f_1} \cdots \xleftarrow{f_{n-2}} S_{n-1} 
\xleftarrow{f_n} \emptyset
\]
Note that the category $\overline{\Sigma}^{\wr 0}_\ast$ consists of only the trivial
 chain $(\star \leftarrow \emptyset)$.  This is equivalent to $\Sigma_0$.

The face and degeneracy maps of $\Sigma^{\wr (n+1)}_\ast$ induce 
the following face and 
degeneracy maps on $\overline{\Sigma}^{\wr (n-1)}_\ast$.
(We introduce an index shift below so that $\bar \partial^n_i$ and $\bar s^n_j$ map from
$\hat\emptyset_n = \overline{\Sigma}^{\wr (n-1)}_\ast$.)
\[\begin{cases}
 \bar\partial^n_i:\overline{\Sigma}^{\wr (n-1)}_\ast \to \overline{\Sigma}^{\wr (n-2)}_\ast,
     &\mathrm{for}\ 1\le i\le (n-1),\ \mathrm{and}\ n> 1 \\
 \bar s^n_i:\overline{\Sigma}^{\wr (n-1)}_\ast \to \overline{\Sigma}^{\wr n}_\ast,
     &\mathrm{for}\ 0\le i\le n\ \mathrm{and}\ n \ge 1
\end{cases}\] 
The degeneracy map $\bar s^n_n$ doubles $\emptyset$, recognizing that a tree without
leaves of level $n$ is also of level $(n+1)$.  
Note that $\bar\partial^2_1:\overline{\Sigma}^{\wr 1}_\ast \to \Sigma_0$
coequalizes all chains of face maps from $\overline{\Sigma}^{\wr (n-1)}_\ast$ to 
$\overline{\Sigma}^{\wr 1}_\ast$.  We write $\bar \gamma^n$ for the composition
$\bar\gamma^n = (\bar\partial^2_{i_2}\cdots\bar\partial^n_{i_n})$.
%(Thus $\bar\gamma^2=\bar\partial^2_1$ will fill the role previously filled by $\gamma^2$.)

Under the identification $\overline{\Sigma}^{\wr n}_\ast \subset \Sigma^{\wr (n+2)}_\ast$,
Definition~\ref{D:composition product} of symmetric sequence composition restricts to a functor 
$(A_1\bcirc \cdots A_{n-1}\bcirc a):\overline{\Sigma}^{\wr (n-1)}_\ast \to \catC$.
For example, $A\bcirc a$ is given by the following.
\begin{align*}
\bigl(\star \xleftarrow{f_0} S \xleftarrow{f_1} \emptyset \bigr) \longmapsto
  \ \ &A(S)\otimes\left(\bigotimes_{s\in S} a(\emptyset)\right) \\ 
&= A(S)\otimes a^{\otimes |S|}
\end{align*}
The right Kan extension of Definition~\ref{D:composition product} restricts to a 
right Kan extension over $\bar\gamma^n:\overline{\Sigma}^{\wr (n-1)}_\ast \to \Sigma_0$,
 yielding the following functor.
\[
A_1\pbcirc\cdots\pbcirc A_{n-1}\pbcirc a = 
 \Rkan_{\bar \gamma^n} (A_1\bcirc \cdots\bcirc A_{n-1}\bcirc a) :
 \Sigma_0 \longrightarrow \catC
\]
For example, 
$(A\pbcirc a) = 
\displaystyle \prod_{k\le 0} \Bigl(A(k) \otimes a^{\otimes k}\Bigr)^{\Sigma_k}.$

\begin{definition}
A coalgebra over the cooperad $(\cO, \tilde\Delta, \tilde \epsilon)$ is 
$(c,\tilde\Delta_c)$ where $c$ is an object of $\catC$ and 
$\tilde\Delta_c: c\, \bar\gamma^2 \to \cO \bcirc c$ is compatible with 
face maps 
$\bar\partial^2_1=(\gamma^2\wr \mathrm{Id})$, 
$\bar\partial^2_2=(\mathrm{Id}\wr \bar\gamma^2)$ and degeneracy
$\bar s^1_0$.
\end{definition}

That is, 
 the following diagrams (analogous to Diagrams~(\ref{eqn:comod-coface}) and
 (\ref{eqn:comod-codegen}))
 should commute.
\begin{equation} \label{eqn:coalg-coface}\begin{aligned}
\xymatrixnocompile@R=2pt{
& (\cO\bcirc c) (\gamma^2\wr \text{Id}) \ar@/^/[dr]^(.6){\tilde\Delta_\cO\wr\text{Id}} & \\
c\, \bar\gamma^3 
   \ar@/^/[ur]^(.35){\tilde\Delta_c} \ar@/_/[dr]_(.35){\tilde\Delta_c} & &
  \hskip -10pt \cO\bcirc \cO\bcirc  c\\
& (\cO\bcirc c) (\text{Id}\wr \bar\gamma^2) \ar@/_/[ur]_(.6){\text{Id}\wr\tilde\Delta_{c}} & 
}\end{aligned}\end{equation}
\begin{equation} \label{eqn:coalg-codegen}\begin{aligned}
\xymatrixnocompile@R=3pt@C=10pt{
& c\, \bar\gamma^2 \bar s^1_0  
    \ar[rr]^(.45){\tilde\Delta_c\, \bar s^1_0} 
&& (\cO\bcirc c)\, \bar s^1_0 
    \ar[rrr]^(.5){(\tilde\epsilon\bcirc \mathrm{Id})\, \bar s^1_0} 
&&& (\Id\bcirc c)\, \bar s^1_0 
    \ar@/^/[dr]^(.67){=} & \\
c \ar[rrrrrrr]|{\mathrm{\,Id\,}}  
    \ar@/^/[ur]^(.33){=} 
 & & & & & & & c \\
}
\end{aligned}\end{equation}

Statements and proofs about comodules translate into statements and proofs 
about coalgebras by converting $\partial^n_i$, $s^n_i$ into $\bar\partial^n_i$, 
$\bar s^n_i$.  Essentially, coalgebras are comodules which are concentrated in 
0-arity.   Write
$\Delta_c$ for the induced map (in $\catC$) $\Delta_c:c\to \cO\pbcirc c$.  As with comodules
we have
$\tilde\Delta^{n+1}_i:\bigl(\cO^{\bcirc (n-1)}\bcirc c\bigr)\,\bar\partial^{n+1}_i \to
 \cO^{\bcirc n}\bcirc c$ inducing
$\Delta^{n+1}_i:\cO^{\pbcirc(n-1)}\pbcirc c\to \cO^{\pbcirc n}\pbcirc c$.

\begin{theorem}\label{L:algebra complex}
The comultiplication $\Delta_c$ defines
a canonical 
coaugmented cosimplicial complex (in $\catC$)
$$\xymatrix@C=2pt{
 c \ar[rrrr]
    &&\quad&\ar@{-->}@<3pt>[ll]  &
 \cO\pbcirc c \ar@<3pt>[rrrr] \ar@<-3pt>[rrrr]
    &&\quad&\ar@{-->}[ll] \ar@{-->}@<6pt>[ll] &
 \cO^{\pbcirc 2}\pbcirc c \ar@<6pt>[rrrr] \ar[rrrr] \ar@<-6pt>[rrrr]
    &&\quad&\ar@{-->}@<3pt>[ll] \ar@{-->}@<-3pt>[ll] \ar@{-->}@<9pt>[ll] &
 \cO^{\pbcirc 3}\pbcirc c \ar@<9pt>[rrrr] \ar@<3pt>[rrrr] \ar@<-3pt>[rrrr] \ar@<-9pt>[rrrr]
    &&\quad&\ar@{-->}[ll] \ar@{-->}@<6pt>[ll] \ar@{-->}@<-6pt>[ll] 
            \ar@{-->}@<12pt>[ll] & 
 \cdots  
}$$
\end{theorem}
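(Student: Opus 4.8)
The plan is to run the argument of Theorem~\ref{thm:cO_complex}, together with its comodule refinement Theorem~\ref{L:module complex}, essentially verbatim, replacing every occurrence of the wreath category $\Sigma_\ast^{\wr n}$, its face and degeneracy functors $\partial^n_i$, $s^n_j$, and its collapse functor $\gamma^n$ by the barred category $\hat\emptyset_n = \overline{\Sigma}^{\wr (n-1)}_\ast \subset \Sigma^{\wr (n+1)}_\ast$ and the induced functors $\bar\partial^n_i$, $\bar s^n_j$, $\bar\gamma^n$. The conceptual point, already flagged in the text, is that $\hat\emptyset_\bullet$ carries exactly the structure used in the cooperad and comodule proofs --- by Remark~\ref{rem:hat_empty1} it is an (unaugmented) simplicial category with two extra degeneracies, the missing bottom face being harmless here --- so those proofs transport along the dictionary $\partial\leftrightarrow\bar\partial$, $s\leftrightarrow\bar s$, $\gamma\leftrightarrow\bar\gamma$.

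Concretely, I would first record the coalgebra analogues of the three cosimplicial-identity propositions. Applying Proposition~\ref{prop:wreath_is_associative} to the barred categories produces natural transformations $\tilde\Delta^{n+1}_i:\bigl(\cO^{\bcirc(n-1)}\bcirc c\bigr)\,\bar\partial^{n+1}_i \to \cO^{\bcirc n}\bcirc c$ (applying $\tilde\Delta_\cO$ at an interior position $i$, or $\tilde\Delta_c$ at the bottom) and $\tilde\epsilon^n_j$ on the appropriate $\bar s^n_j$-pullback. The simplicial identities among the $\bar\partial^n_i$ and $\bar s^n_j$, together with Diagrams~(\ref{eqn:coalg-coface}) and (\ref{eqn:coalg-codegen}) and the cooperad axioms, then yield --- by the same bookkeeping as in Propositions~\ref{prop:delta_equalizer}, \ref{prop:cosimplicial_identity_codegen} and \ref{prop:cosimplicial_identity_others} --- the relations $\tilde\Delta^{n+1}_j\tilde\Delta^n_i = \tilde\Delta^{n+1}_i\tilde\Delta^n_{j-1}$ for $j>i$, the dual relations among the $\tilde\epsilon^n_j$, and the mixed $\tilde\Delta$--$\tilde\epsilon$ relations.

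The one input that must genuinely be checked, rather than merely translated, is the Kan-extension bookkeeping on which the definitions of $\Delta^n_i$ and $\epsilon^n_j$ rest: since $\bar\gamma^{n-1}\,\bar\partial^n_i = \bar\gamma^n$ and $\bar\partial^n_i$ is epi, transformations out of $B\,\bar\gamma^n$ into $(\cO^{\bcirc(n-1)}\bcirc c)\,\bar\partial^n_i$ are the same data as transformations out of $B\,\bar\gamma^{n-1}$, whence $\Rkan_{\bar\gamma^n}\bigl((\cO^{\bcirc(n-1)}\bcirc c)\,\bar\partial^n_i\bigr) = \Rkan_{\bar\gamma^{n-1}}(\cO^{\bcirc(n-1)}\bcirc c) = \cO^{\pbcirc(n-1)}\pbcirc c$, and similarly $\bar\gamma^{n+1}\,\bar s^n_j = \bar\gamma^n$. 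Granting this, $\Delta^{n+1}_i$ and $\epsilon^n_j$ on the objects $\cO^{\pbcirc(n-1)}\pbcirc c$ are defined as $\Rkan_{\bar\gamma^n}$ of the $\tilde\Delta^{n+1}_i$ and $\tilde\epsilon^n_j$ exactly as in the cooperad case, the cosimplicial identities among them follow by applying $\Rkan_{\bar\gamma^n}$ to the relations of the previous paragraph, and the compositions $\Delta^{n+1}_i\,\epsilon^n_j$ are handled by the same two-step factorization of right Kan extensions used at the end of the proof of Theorem~\ref{thm:cO_complex}. The coaugmentation $c\to\cO\pbcirc c$ is $\Delta_c$, and the identification $\overline{\Sigma}^{\wr 0}_\ast\cong\Sigma_0$ places $c$ itself at the bottom of the complex.

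The main obstacle I anticipate is organizational rather than mathematical: confirming that the index shift adopted for the barred functors --- so that $\bar\partial^n_i$ and $\bar s^n_j$ emanate from $\hat\emptyset_n$ and that $\bar s^n_n$ ``doubles $\emptyset$'', recognizing a leafless level-$n$ tree as a level-$(n+1)$ tree --- is consistent across all of the relations simultaneously, and that $\bar\partial^n_i$ is genuinely epi. This is precisely where the missing bottom face map and the extra degeneracy distinguish $\hat\emptyset_\bullet$ from an ordinary simplicial object, so it is the place to be careful; once the dictionary is pinned down, no new idea is required beyond what already appears in Theorems~\ref{thm:cO_complex} and \ref{L:module complex}.
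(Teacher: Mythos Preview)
Your proposal is correct and matches the paper's approach exactly: the paper gives no explicit proof for this theorem, instead remarking just before it that ``statements and proofs about comodules translate into statements and proofs about coalgebras by converting $\partial^n_i$, $s^n_i$ into $\bar\partial^n_i$, $\bar s^n_i$'' and that ``coalgebras are comodules which are concentrated in $0$-arity.'' Your careful unpacking of the dictionary $\partial\leftrightarrow\bar\partial$, $s\leftrightarrow\bar s$, $\gamma\leftrightarrow\bar\gamma$ and the Kan-extension bookkeeping is precisely what the paper leaves implicit.
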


\begin{corollary}\label{C:mu^a_n exists}
There are unique 
$\catC$-maps
$\Delta^{[n]}:c \longrightarrow \cO^{\pbcirc (n-1)} \pbcirc c$.  
These are equal to any combination
of parenthesization maps and cocomposition maps from their source to their target.  
\end{corollary}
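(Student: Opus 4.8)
The plan is to carry out, in the coalgebra setting, the same argument that establishes the cooperad corollary following Proposition~\ref{prop:delta_equalizer} and its comodule version Corollary~\ref{C:mu^M_n exists}; by the translation principle (``convert $\partial^n_i$, $s^n_i$ into $\bar\partial^n_i$, $\bar s^n_i$'') every tool used there has a coalgebra counterpart. First I would fix a candidate, setting
\[
\Delta^{[n]}\ :=\ \Delta^n_1\,\Delta^{n-1}_1\cdots\Delta^3_1\,\Delta_c\ :\ c\longrightarrow \cO^{\pbcirc(n-1)}\pbcirc c,
\]
the coaugmentation $\Delta_c$ followed by the ``first'' coface maps of the coaugmented cosimplicial complex of Theorem~\ref{L:algebra complex}. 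Existence is then immediate, and $\Delta^{[n]}$ visibly involves no parenthesization maps because $\cO^{\pbcirc(n-1)}\pbcirc c$ is already the fully unbracketed expression; the content of the statement is that every admissible composite equals this one, and uniqueness follows from that.

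Second, I would handle composites using no parenthesization maps, i.e.\ arbitrary chains of coface maps $\Delta^n_{i_n}\cdots\Delta^3_{i_3}\,\Delta_c:c\to\cO^{\pbcirc(n-1)}\pbcirc c$. Any two of these coincide: this is the standard fact that in a coaugmented cosimplicial object the iterated-coface map from the coaugmenting object to any level is well defined, and here it is forced by the cosimplicial identities of Theorem~\ref{L:algebra complex} --- the coface relations $\Delta^{m+1}_j\,\Delta^m_i=\Delta^{m+1}_i\,\Delta^m_{j-1}$ for $j>i$ together with the coaugmentation identity $\Delta^3_1\,\Delta_c=\Delta^3_2\,\Delta_c$. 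A short induction on $n$ (peel off the last coface, then bubble the relevant index into first position with these relations) rewrites any such chain into $\Delta^{[n]}$.

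Third --- and this is the main obstacle --- I would normalize an arbitrary combination $f$ of parenthesization maps and cocomposition maps from $c$ to $\cO^{\pbcirc(n-1)}\pbcirc c$ into a chain of coface maps. Since each cocomposition map (an application of $\tilde\Delta_\cO$ at some $\cO$-slot, or of $\tilde\Delta_c$ at the $c$-slot, of some bracketed expression) raises the number of $\cO$'s by exactly one while parenthesization maps leave it fixed, $f$ contains exactly $n-1$ cocomposition maps, and I would induct on that number. Reading $f$ from the target backwards: all parenthesization maps after the last cocomposition map of $f$ compose, by Proposition~\ref{prop:parenthesization_is_associative}, to the canonical parenthesization map from some bracketed expression $P'$ onto $\cO^{\pbcirc(n-1)}\pbcirc c$; the last cocomposition map applies a $\tilde\Delta$ at a slot of a bracketed expression $P$ and lands in $P'$. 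By the coalgebra counterpart of the theorem following Proposition~\ref{prop:parenthesization_is_associative} (the one relating $\mathrm{Id}\pbcirc\Delta\pbcirc\mathrm{Id}$, and its parenthesizations, to the coface maps) --- proved exactly as there, by applying $\Rkan_{\bar\gamma^{\bullet}}$ to a single large commuting diagram of $\bcirc$-level functors --- this cocomposition map followed by the canonical parenthesization $P'\to\cO^{\pbcirc(n-1)}\pbcirc c$ equals the canonical parenthesization $P\to\cO^{\pbcirc(n-2)}\pbcirc c$ followed by a single coface map $\Delta^n_i$. The initial segment of $f$, post-composed with this parenthesization $P\to\cO^{\pbcirc(n-2)}\pbcirc c$, is again a combination of parenthesization and cocomposition maps, now from $c$ to $\cO^{\pbcirc(n-2)}\pbcirc c$ with one fewer cocomposition map, so the inductive hypothesis identifies it with a chain of coface maps (the base case being the parenthesization-free combination $c\to\cO\pbcirc c$, which is forced to equal $\Delta_c$). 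Hence $f$ is itself a chain of coface maps, and the second step gives $f=\Delta^{[n]}$. This yields both existence and uniqueness; the identical argument, with $c$ replaced by $M$ or by a second copy of $\cO$, proves Corollary~\ref{C:mu^M_n exists} and the cooperad statement.
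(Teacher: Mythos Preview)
Your proposal is correct and follows the approach the paper implicitly relies on: deduce the statement from the coaugmented cosimplicial structure of Theorem~\ref{L:algebra complex} together with the compatibility of parenthesization maps with the coface maps established in the parenthesization theorem. The paper gives no proof at all --- the corollary is stated bare, as an immediate consequence of Theorem~\ref{L:algebra complex} (and the analogous comodule corollary is marked \qed\ without argument) --- so your write-up is considerably more detailed than anything in the text. In particular, your Step~2 (any two chains of coface maps out of the coaugmentation agree) is the standard cosimplicial fact the paper takes for granted, and your Step~3 (normalize an arbitrary mixture of parenthesizations and cocompositions into a pure coface chain) makes explicit what the paper leaves to the reader via the theorem and Example following Proposition~\ref{prop:parenthesization_is_associative}. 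There is nothing to correct; if anything, you have supplied the argument the paper omits.
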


\section{Examples}

We end with a two simple examples of cooperads which are not duals of standard operads.
Both of these are constructed via quotient/contraction operations.  The (directed) graph 
cooperad is used in \cite{SiWa1} and the contractible $\Delta$ complex operad is a 
generalization.

\subsection{The Graph Cooperad}

Given a finite set $S$, a contractible $S$-graph is a connected, acyclic graph whose
vertex set is $S$.  The unoriented graph cooperad has $\overline{\textsc{gr}}(S)$ equal to the 
free $\mathbb{Z}$ module generated by all contractible $S$-graphs. 
The cocomposition natural transformation
$\tilde\Delta:\overline{\textsc{gr}}\,\gamma^2 \to
    \overline{\textsc{gr}}\bcirc\overline{\textsc{gr}}$
is defined as follows.

Given two graphs $G$ and $K$, a quotient map of graphs $q:G \twoheadrightarrow K$
is a surjective map from vertices of $G$ onto vertices of $K$ such that 
$q(v_1,v_2) = \bigl(q(v_1),q(v_2)\bigr)$ defines a map sending edges of $G$ to 
edges and vertices (if $q(v_1)=q(v_2)$) of $K$, surjecting onto the edges.  
Note that if $q:G\twoheadrightarrow K$ is a quotient map and $v$ is a vertex of $K$,
then $q^{-1}(v)$ is a subgraph of $G$.
A graph contraction is a quotient map where each $q^{-1}(v)$ is a connected subgraph. 
Note that there is a bijection between the edges of $G$ and the edges of $K$ union those of
the $q^{-1}(v)$.

Suppose $G$ is an $S$-graph and $f:S\twoheadrightarrow T$ is a surjection of sets.
Given $t\in T$, let $\overline{f^{-1}(t)}$ be the maximal subgraph of $G$ 
supported by the vertices of $f^{-1}(t)$.
We say that $f$ induces a graph contraction on $G$ if 
$\overline{f^{-1}(t)}$ is contractible for each $t$.
In this case, we define the induced contracted graph $(G/f)$ to have vertices $T$ with 
an edge from vertex $t_1$ to $t_2$ if there is an edge in $G$ from the subgraph
$\overline{f^{-1}(t_1)}$ to the subgraph $\overline{f^{-1}(t_2)}$.  

Cocomposition $\tilde\Delta$ takes the element 
$\bigl(T \xleftarrow{f} S \bigl)$ of
 $\Sigma_\ast\wr\Sigma_\ast$ to the map 
\[
\overline{\textsc{gr}}(S) \longrightarrow 
  \overline{\textsc{gr}}(T) \otimes
    \Bigl(\bigotimes_{t\in T} \overline{\textsc{gr}}(f^{-1}(t))\Bigr)
\]
which takes a $S$-graph $G$ to 
$(G/f)\otimes \bigl(\bigotimes_{t\in T} \overline{f^{-1}(t)}\bigr)$
if $f$ defines a graph contraction on $G$, and sends $G$ to 0 otherwise.
Since the quotient operation described previously is clearly associative, this defines a 
symmetric sequence with cocomposition. 
The counit map sends $S$-graphs with only one vertex to $1\in \mathbb{Z}$ and 
kills all others.

The (directed) graph cooperad is similar to the unoriented graph cooperad.  In the 
category of \underline{directed}, contractible $S$-graphs define
$\overrightharpoonup{\textsc{gr}}(S) = \overline{\textsc{gr}}(S)/\sim$, 
where $\sim$ identifies 
reversing the orientation of an edge with multiplication of a graph by $-1$.
Cocomposition on $\overline{\textsc{gr}}$ gives a well-defined map on 
$\overrightharpoonup{\textsc{gr}}$ since reversing an arrow in $G$ will 
reverse exactly one arrow either
in the quotient graph $G/f$ or in one of the $f^{-1}(t)$.

The graph cooperad generalizes to the following.

\subsection{The CDC Cooperad}

By a $\Delta$-complex, we mean what Hatcher \cite[Appendix]{Hat} calls a ``singular
$\Delta$-complex'' or $s\Delta$-complex''.  Essentially this is a CW complex whose cells
are all (oriented) simplices and whose attaching maps factor through face maps of the simplex.  
Given a set
$S$, an $S\,\Delta$-complex is a $\Delta$-complex whose 0-cells are labeled by elements of 
$S$.  The CDC cooperad has $\textsc{cdc}(S)$ equal to the free $\mathbb{Z}$ module 
generated by contractible $S\,\Delta$-complexes.  Cocomposition is defined similar to that
for $\overline{\textsc{gr}}$.  

If $T$ is a subset of the 0-cells of a $\Delta$-complex $X$, write $\overline{T}$ for the
maximal CW subcomplex of $X$ supported by $T$.
Quotient maps for $\Delta$-complexes are CW quotient maps.  We say a quotient map
$X\twoheadrightarrow Y$ is a contraction if the inverse image of each 0-cell of $Y$ 
is a contractible subcomplex of $X$.  If $X$ is a $S\,\Delta$-complex then a set
surjection $f:S\twoheadrightarrow T$ induces a CW contraction on $X$ if $\overline{f^{-1}(t)}$ 
is contractible for each
$t\in T$.  In this case, we define $(X/f)$ to be the quotient of $X$ by the sub 
CW-complexes $\overline{f^{-1}(t)}$.  
The cocomposition map of $\textsc{cdc}$ takes 
$(T \xleftarrow{f} S)$ to the map which sends the 
$S\,\Delta$-complex $X$ to $(X/f)\otimes\bigl(\bigotimes_{t\in T} \overline{f^{-1}(t)}\bigr)$
if $f$ induces a CW contraction on $X$ and 0 otherwise.

\end{document}